\documentclass[11pt,letterpaper]{amsart}

\usepackage[utf8]{inputenc}
\usepackage[english]{babel}
\usepackage{amsmath}
\usepackage{amsthm}
\usepackage{amssymb}
\usepackage{mathscinet}
\usepackage{forest}
\usepackage{tikz}
\usetikzlibrary{positioning}

\usepackage{float}
\usepackage[hidelinks, pagebackref]{hyperref}

\renewcommand*{\backref}[1]{}
\renewcommand*{\backrefalt}[4]{
  \ifcase #1 
  [No citations.]
  \or [#2]
  \else [#2]
  \fi }

\usepackage{microtype}
\usepackage[inline]{enumitem}

\theoremstyle{plain}
\newtheorem{theorem}{Theorem}[section]
\newtheorem{proposition}[theorem]{Proposition}
\newtheorem{lemma}[theorem]{Lemma}
\newtheorem{corollary}[theorem]{Corollary}

\theoremstyle{definition}
\newtheorem{definition}[theorem]{Definition}
\newtheorem{remark}[theorem]{Remark}
\newtheorem*{remark*}{Remark}
\newtheorem{example}[theorem]{Example}
\newtheorem*{example*}{Example}


\makeatletter
\let\c@equation\c@subsection
\makeatother
\numberwithin{theorem}{section} 

\makeatletter
\let\c@figure\c@equation
\makeatother
\numberwithin{figure}{section} 

\makeatletter
\newtheoremstyle{case-style}
  {3pt}
  {3pt}
  {}
  {}
  {\itshape}
  {.}
  {.5em}
  {\thmname{#1}\thmnumber{\@ifnotempty{#1}{ }#2}%
   \thmnote{ {\the\thm@notefont(#3)}}}
\makeatother

\theoremstyle{case-style}
\newtheorem{case}{Case}
\newtheorem{subcase}{Case}
\newtheorem{case_t}{Case}
\newtheorem{subcase_t}{Case}
\newtheorem{case_s}{Case}

\newtheorem{step}{Step}

\numberwithin{subcase}{case}
\numberwithin{subcase_t}{case_t}
\numberwithin{subcase_s}{case_s}

\theoremstyle{plain}
\newtheorem{XXXtheoremQED}[equation]{Theorem} 
  {\pushQED{\qed}\begin{XXXtheoremQED}}
  {\popQED\end{XXXtheoremQED}}

\newcommand{\fakeenv}{} 

\newenvironment{restate}[2]  
{ 
 \renewcommand{\fakeenv}{#2} 
 \theoremstyle{plain} 
 \newtheorem*{\fakeenv}{#1~\ref{#2}} 
 \begin{\fakeenv}
}
{
 \end{\fakeenv}
}


\newcommand{\Aut}{\mathrm{Aut}} 
\newcommand{\Out}{\mathrm{Out}} 
\newcommand{\SL}{\mathrm{SL}} 

\newcommand{\EXPSPACE}{\textsc{EXPSPACE}} 
\newcommand{\LogCFL}{\textsc{LogCFL}} 
\newcommand{\DET}{\textsc{DET}} 
\newcommand{\NP}{\textsc{NP}} 
\newcommand{\NC}{\textsc{NC}} 
\newcommand{\coNP}{\textsc{coNP}} 
\newcommand{\coRP}{\textsc{coRP}} 
\newcommand{\Ptime}{\textsc{P}} 
\newcommand{\PSPACE}{\textsc{PSPACE}} 

\newcommand{\depth}{\mathsf{depth}}
\newcommand{\height}{\mathsf{height}}
\newcommand{\eval}{\mathsf{eval}}

\makeatletter
\def\thmhead@plain#1#2#3{%
  \thmname{#1}\thmnumber{\@ifnotempty{#1}{ }\@upn{#2}}%
  \thmnote{ {\the\thm@notefont#3}}}
\let\thmhead\thmhead@plain
\makeatother


\newcommand{\from}{\colon}
\newcommand{\st}{\mathbin{\mid}} 

\newcommand{\calG}{{\mathcal{G}}}
\newcommand{\calH}{{\mathcal{H}}}
\newcommand{\calL}{{\mathcal{L}}}
\newcommand{\calM}{{\mathcal{M}}}

\newcommand{\NN}{\mathbb{N}}
\newcommand{\ZZ}{\mathbb{Z}}

\newcommand{\emptyword}{\varepsilon}
\newcommand{\slex}{\mathsf{slex}}
\newcommand{\Ball}{{\sf{B}}}

\newcommand{\teth}[1]{\langle #1 \rangle}
\newcommand{\subgp}[1]{\langle #1 \rangle}
\newcommand{\group}[2]{\left\langle #1 \left\vert\, #2 \right. \right\rangle}
\newcommand{\ceiling}[1]{\lceil #1 \rceil}


\usetikzlibrary{decorations.markings}
\makeatletter
\tikzset{%
    measureme/.style={
    decoration={
        markings,
        mark=at position 1 with {\node[below,black,anchor=west] {\quad \pgfdecoratedpathlength};},
    },
    postaction=decorate
}
}
\makeatother

\begin{document}

\title{Compressed decision problems in hyperbolic groups}

\author[D. Holt]{Derek Holt}
\address{University of Warwick, UK}
\email{D.F.Holt@warwick.ac.uk}
\author[M. Lohrey]{Markus Lohrey}
\address{Universit\"at Siegen, Germany}
\email{lohrey@eti.uni-siegen.de}
\author[S. Schleimer]{Saul Schleimer}
\address{University of Warwick, UK}
\email{s.schleimer@warwick.ac.uk}
\thanks{The second author has been supported by the DFG research project
LO 748/13-1.}

\begin{abstract}
We prove that, for any hyperbolic group, the compressed word and the compressed conjugacy problems are solvable in polynomial time.
As a consequence the word problem for the (outer) automorphism group of a hyperbolic group is solvable in polynomial time.
We also prove that the compressed simultaneous conjugacy and the compressed centraliser problems are solvable in polynomial time. 
Finally, we prove that, for any infinite hyperbolic group, the compressed knapsack problem is $\NP$--complete.

\medskip
\noindent
Mathematics Subject Classification -- MSC2020: 20F10; 20F67

\end{abstract}

\maketitle

\section{Introduction}

\subsection{Background}

Suppose that $G$ is a finitely generated group.  
Let $\Sigma$ be a finite generating set which is \emph{symmetric}:
if $a$ lies in $\Sigma$ then so does $a^{-1}$. 
The \emph{word problem} for $G$ asks, given a word $w \in \Sigma^*$, if $w$ represents the identity in $G$. 
This, along with the \emph{conjugacy} and \emph{isomorphism problems}, was set out by Dehn~\cite{Dehn11} in 1911. 
These three decision problems are fundamental in group theory generally~\cite{MKS76, ChandlerMagnus82}. 

However, Dehn's claimed justification was that solutions to these problems have applications in what is now called \emph{low-dimensional topology}.  
Dehn's techniques, in particular his solution to the word problem in surface groups, were greatly generalised by Gromov.  
In~\cite{Gro87} Gromov introduced what are now called \emph{word-hyperbolic} or \emph{Gromov hyperbolic} groups.  
(We will simply call these \emph{hyperbolic} groups.  
See Section~\ref{sec-hyp}.)  
With this and other innovations Gromov revived the strictly geometric study of groups.  
For example, he characterised hyperbolic groups as being exactly those that satisfy a linear isoperimetric inequality; 
see~\cite[Section~6.8.M]{Gro87} as well as~\cite{Olshanskii91, Bowditch95, Papasoglu95}.
Gromov also showed that, in certain models of \emph{random groups}, all groups are almost surely hyperbolic; see~\cite[Section~5.5.F]{Gro87} as well as~\cite{Olsh92}.

Another theme in geometric group theory is the subject of \emph{distortion}.
As a concrete example, consider the Baumslag-Solitar group~\cite{BaumslagSolitar62}
\[
G = \group{a, b}{ b^{-1} a b = a^2 }
\]
The subgroup $\subgp{a}$ is exponentially distorted in $G$, in the sense that the element $a^{2^n} =_G b^{-n} a b^n$ has length $2^n$ as an element of $\subgp{a}$ but length $2n + 1$ as an element of $G$.
Thus, to solve the word problem efficiently in $G$ it seems necessary to record exponents of $a$, say in binary; see also \cite{Weiss16}.

So, seeking to solve the word problem in groups leads us to consider \emph{compressed words}: elements of the group given by some useful succinct representation. 
One popular such representation is by \emph{straight-line programs}; 
we give definitions and examples of these in Section~\ref{sec-SLP-def}.
We will call the word problem for group elements that are represented by straight-line programs the \emph{compressed word problem}.

The motivating result for this paper is a theorem due to the second author~\cite[Theorem~4.5]{Loh06siam}.  
Let $F_n$ be the free group on $n$ generators. 
Lohrey gives a polynomial-time algorithm to solve the compressed word problem for $F_n$; in fact this problem is $\Ptime$--complete.  
Building on this, the third author shows in~\cite[Theorems~5.2 and~6.1]{Schl06} that the word problems for $\Aut(F_n)$ and $\Out(F_n)$ can be solved
in polynomial time and then goes on to show that the compressed word problem for closed surface groups can be solved in polynomial time.
This also gives a new solution to the word problem in mapping class groups of surfaces.

This sequence of results closely parallels Dehn's original development, but in the compressed setting. 

\subsection{This paper}

Suppose that $w$ is a word in the generators of $G$.  
We say $w$ is \emph{shortlex reduced} if it is shorter than, or of the same length and lexicographically earlier than, any other word representing the same group element; see Definition~\ref{def-shortlex}.  
Suppose that $\calG$ is a straight-line program over $\Sigma$.  
Then we denote the output of $\calG$ by $\eval(\calG)$.  
Here is our main result.

\begin{restate}{Theorem}{thm-SLP-for-shortlex}
Let $G$ be a hyperbolic group, with symmetric generating set $\Sigma$. 
There is a polynomial-time algorithm that, given a straight-line program $\calG$ over $\Sigma$, finds a straight-line program $\calH$ so that $\eval(\calH)$ is the shortlex reduction of $\eval(\calG)$. 
\end{restate}

This was previously announced without proof in~\cite[Theorem~4.12]{Loh14}.  From this theorem we deduce the following.

\begin{restate}{Corollary}{coro-CWP-hyperbolic}
Let $G$ be a hyperbolic group. 
Then the compressed word problem for $G$ can be solved in polynomial time.
\end{restate} 

In the recent paper~\cite{HoltRees20} the first author, with Sarah Rees, has generalised the techniques of this paper to relatively hyperbolic groups where all peripheral groups are free abelian.  So, for a knot $K \subset S^3$, the compressed word problem for the knot complement is polynomial time.  This gives a further parallel with Dehn's programme for low-dimensional topology via the study of the fundamental group.

\subsection{Applications}

Given these results, in Section~\ref{sec-conjugacy} we deal with the compressed versions of several other algorithmic problems.  
Recall that the \emph{order problem} for a group $G$ asks us, given an element $g \in G$, to compute the order of $g$.  Since hyperbolic groups only have torsion elements of bounded order we can prove the following. 

\begin{restate}{Corollary}{coro-order}
Let $G$ be a hyperbolic group. 
Then the compressed order problem for $G$ can be solved in polynomial time.
\end{restate}

The first author, with Epstein, proved in~\cite{EpsteinH06} that the conjugacy problem in hyperbolic groups is linear time.
If, in the conjugacy problem, we replace the given pair of elements by a pair of finite ordered lists of elements then we obtain the \emph{simultaneous conjugacy problem}.  
See~\cite{KaMa12} and its references for a discussion of this problem, for various classes of groups. 

In the \emph{centraliser problem} the input consists of a list of group elements $g_1, \ldots, g_k \in G$ and the goal is
to compute a set of generators for the intersection of the centralizers of the $g_i$.
Holt and Buckley proved in~\cite{BuckleyHolt13} that the simultaneous conjugacy problem as well as the centraliser problem for hyperbolic groups is linear time.  

Using the results of~\cite{EpsteinH06, BuckleyHolt13}, and our work above, we solve the compressed versions of these problems. 

\begin{restate}{Theorem}{thm-compcp}
Let $G$ be a hyperbolic group. 
Then the compressed simultaneous conjugacy problem for $G$ can be solved in polynomial time.
Moreover, if the two input lists are conjugate, then we can compute a straight-line program for a conjugating element in polynomial time. 
\end{restate}

\begin{restate}{Theorem}{thm-comp-central}
Let $G$ be a hyperbolic group. 
Then the compressed centraliser problem for $G$ can be solved in polynomial time.
\end{restate}

We remark that, for finitely generated nilpotent groups, the (compressed) simultaneous conjugacy problem is solvable in polynomial time~\cite[Theorem~7]{MacDonaldMO17}.

As suggested in~\cite[Remark~A.5]{Schl06}, the word problem for a finitely generated subgroup of the automorphism group $\Aut(G)$ is polynomial-time reducible to the compressed word problem for $G$.
Similarly, the word problem for a finitely generated subgroup of the outer automorphism group $\Out(G)$ is polynomial-time reducible to the compressed simultaneous conjugacy problem for $G$; see~\cite[Proposition~10]{HauLohHau13}.  

Note that, if $G$ is hyperbolic then $\Aut(G)$, and thus $\Out(G)$, is finitely generated; see~\cite[Corollary~8.4]{DaGui11}.
We deduce the following. 

\begin{corollary}
Let $G$ be a hyperbolic group.  
Then the word problems for $\Aut(G)$ and $\Out(G)$ can be solved in polynomial time. \qed
\end{corollary}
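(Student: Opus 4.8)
The plan is to reduce the word problems for $\Aut(G)$ and $\Out(G)$ to the compressed decision problems we have already solved. First I would fix a finite symmetric generating set $\Sigma$ for the hyperbolic group $G$, and recall from~\cite{DaGui11} that $\Aut(G)$ — and hence its quotient $\Out(G)$ — is finitely generated; let $\{\phi_1, \dots, \phi_k\}$ be a finite generating set for the relevant group. An element of the subgroup is given as a word $\psi = \phi_{i_1}^{\pm 1} \cdots \phi_{i_m}^{\pm 1}$ of length $m$ in these generators. To decide whether $\psi$ is trivial in $\Aut(G)$ it suffices to check whether $\psi$ fixes each generator $a \in \Sigma$, i.e.\ whether $\psi(a) =_G a$ for all $a \in \Sigma$.

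The key point is that although $\psi(a)$ may be an exponentially long word in $\Sigma^*$, it is produced by iterated substitution and therefore admits a small straight-line program. Concretely, each automorphism $\phi_i$ sends each generator $a \in \Sigma$ to some fixed word $\phi_i(a) \in \Sigma^*$; these finitely many words are hard-coded into the algorithm. Applying $\phi_{i_m}^{\pm 1}$, then $\phi_{i_{m-1}}^{\pm 1}$, and so on, one builds up — in a standard way — a straight-line program $\calG_a$ of size polynomial in $m$ with $\eval(\calG_a) = \psi(a)$ as a word in $\Sigma^*$. (The SLP has one block of nonterminals per letter of $\psi$, each block re-expressing the image of each $a \in \Sigma$ in terms of the nonterminals of the previous block.) Then $\psi$ is trivial in $\Aut(G)$ if and only if $\eval(\calG_a) =_G a$ for every $a \in \Sigma$, which is an instance of the compressed word problem for $G$, solvable in polynomial time by Corollary~\ref{coro-CWP-hyperbolic}. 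This handles $\Aut(G)$.

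For $\Out(G)$, an element $\psi$ is trivial precisely when it represents an inner automorphism, i.e.\ when there exists $g \in G$ with $\psi(a) =_G g a g^{-1}$ for all $a \in \Sigma$ simultaneously. Forming the SLPs $\calG_a$ for $\psi(a)$ as above and also the trivial SLPs for the $a$ themselves, this asks exactly whether the two compressed tuples $(\psi(a))_{a \in \Sigma}$ and $(a)_{a \in \Sigma}$ are simultaneously conjugate in $G$; that is an instance of the compressed simultaneous conjugacy problem, solvable in polynomial time by Theorem~\ref{thm-compcp}. (This route is the one indicated in~\cite[Proposition~10]{HauLohHau13}.) Since the construction of the SLPs from $\psi$ is clearly polynomial time, both reductions are polynomial time and the corollary follows.

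I do not expect a serious obstacle here: the reductions are routine once the heavy machinery — Theorem~\ref{thm-SLP-for-shortlex}, Corollary~\ref{coro-CWP-hyperbolic}, and Theorem~\ref{thm-compcp} — is in place. The one point requiring a little care is verifying that the iterated-substitution SLP $\calG_a$ really has polynomial size; this is the familiar phenomenon that composing $m$ substitutions yields an SLP of size $O(m)$ times the (constant) size of the individual substitution rules, so there is no blow-up. Everything else is bookkeeping.
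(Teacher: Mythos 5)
Your proposal is correct and follows essentially the same route as the paper: the paper deduces the corollary from finite generation of $\Aut(G)$ (citing \cite{DaGui11}) together with the reduction of the word problem for a finitely generated subgroup of $\Aut(G)$ to the compressed word problem (citing \cite{Schl06}) and of the word problem in $\Out(G)$ to the compressed simultaneous conjugacy problem (citing \cite[Proposition~10]{HauLohHau13}), then applies Corollary~\ref{coro-CWP-hyperbolic} and Theorem~\ref{thm-compcp}. The only difference is that you spell out the iterated-substitution straight-line program construction that the paper leaves to the cited references, and your account of it (polynomial size, one block of variables per letter of $\psi$) is accurate.
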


Our final application is to knapsack problems. 
Suppose that $G$ is a finitely generated group.  
The given input is a list $(u_0, u_1, u_2, \ldots, u_k)$ of words over the generators of $G$.  
We are asked if there are natural numbers $n_i$ such that 
\[
u_0 =_G u_1^{n_1} u_2^{n_2} \cdots u_k^{n_k} 
\]
When $G$ is hyperbolic, the knapsack problem can be solved in polynomial time; see~\cite[Theorem~6.1]{MyNiUs14}. 

In the compressed knapsack problem, the words $u_i$ are represented by straight-line programs. 
For the special case $G = \ZZ$ this problem is a variant of the classical knapsack problem for binary encoded integers, which is $\NP$--complete~\cite[page~95]{Karp72}.
Using this, and our results above, we prove the following. 
 
\begin{restate}{Theorem}{thm-knapsack-hyp}
Let $G$ be an infinite hyperbolic group. 
Then the compressed knapsack problem for $G$ is $\NP$--complete.
\end{restate}

\subsection{Related work}

We here give a brief overview of previous work.  For a more in-depth treatment, we refer to~\cite{Loh12survey, Loh14}.

\subsubsection{Compressed word problems}
The use of straight-line programs in group theory dates back to, at least, the methods developed by Sims~\cite{Sims70} for computing with a subgroup of the symmetric group $S_n$ defined by generators. 
The first step in virtually all of the algorithms developed by Sims is to expand the given list of generators to a longer list (a \emph{strong generating set}) by defining a sequence of new generators as words in the existing generators.
Straight-line programs were later used, again in the context of finite groups, by Babai and Szemeredi~\cite{BabSzem84} in the proof of their \emph{Reachability Theorem}.

Note that the compressed word problem for a group $G$ is decidable if and only if the word problem for $G$ is decidable.
However, the computational complexity of the compressed word problem for $G$ can be strictly more difficult than the word problem itself.  We return to this topic below. 

It is interesting to note that the compressed word problem for a group $G$ is exactly the \emph{circuit evaluation problem} for $G$.
For finite groups the compressed word problem, and thus the circuit evaluation problem, is nearly linear time.
In fact, more is known. 
The parallel complexity of the circuit evaluation problem over finite groups is investigated in~\cite{BeMcPeTh97}. 
If $G$ is a finite solvable group, then the compressed word problem for $G$ belongs to the parallel complexity class $\DET \subseteq \NC^2$.  
If $G$ is finite and not solvable, then the compressed word problem for $G$ is $\Ptime$--complete. 

We now turn our attention to infinite, but finitely generated, groups.  
As mentioned above, the word problem for a finitely generated subgroup of $\Aut(G)$ is polynomial-time reducible to the compressed word problem for $G$.  
A similar reduction exists for certain group extensions~\cite[Theorem~4.8 and 4.9]{Loh14}.
These results on automorphisms are tightly connected to the study of distortion of subgroups, mentioned above. 

Beyond hyperbolic groups, there are several important classes of groups where the compressed word problem can be solved in polynomial time.  These include the following. 
\begin{itemize}
\item 
Finitely generated nilpotent groups~\cite[Section~4.7]{Loh14}.
Here, the compressed word problem belongs to the parallel complexity class $\DET$~\cite{KonigL15}.
\item 
Virtually special groups; that is, finite extensions of finitely generated subgroups of right-angled Artin groups~\cite[Corollary~5.6]{Loh14}.
Right-angled Artin groups are also known as graph groups or partially commutative groups. 
The class of virtually special groups contains all Coxeter groups~\cite{HagWi10}, one-relator groups with torsion~\cite{Wis09}, fully residually free groups~\cite{Wis09}, and fundamental groups of hyperbolic three-manifolds~\cite{Agol13}.
Note that the case of fully residually free groups is independently due to Macdonald~\cite{Macd09}.
\end{itemize}

Furthermore, the class of groups with polynomial time compressed word problem is closed under the following operations. 
\begin{itemize}
\item
Graph products~\cite[Chapter~5]{Loh14}.
\item
Amalgamated free products (or HNN--extensions) where the edges groups are finite~\cite[Chapter~6]{Loh14}.
\end{itemize}

We also note that, for finitely generated linear groups, the compressed word problem belongs to the complexity class $\coRP$~\cite[Theorem~4.15]{Loh14}.  
That is, there is a randomised polynomial-time algorithm that may err with a small probability on negative input instances.

\subsubsection{Hardness results}
Certain hardness results for the compressed word problem are known or suspected.
\begin{itemize}
\item 
The compressed word problem for every restricted wreath product $G \wr \ZZ$ with $G$ finitely generated nonabelian is $\coNP$--hard~\cite[Theorem~4.21]{Loh14}.  
For $G$ finite non-solvable (or free of rank two) the problem is $\PSPACE$--complete~\cite[Corollary~B]{BartholdiFLW19}; 
the authors obtain the same result for Thompson's group $F$, the Grigorchuk group and the Gupta-Sidki groups.

On the other hand: the uncompressed word problem for the Grigorchuk group can be solved in logarithmic space \cite{GaZa91}.  
Also, if $G$ is finite then the uncompressed word problem for $G \wr \ZZ$ belongs to the circuit complexity class $\NC^1$~\cite{Waa90}.  
Thus we have examples of groups where the compressed word problem is provably more difficult than the uncompressed word problem.

\item 
There exist automaton groups with an $\EXPSPACE$--complete compressed word problem~\cite{WaeWei19}. 

On the other hand, the uncompressed word problem for any automaton group belongs to $\PSPACE$.  
Again this gives examples where the compressed word problem is provably more difficult than the uncompressed. 

\item 
The compressed word problem for the linear group $\SL(3, \ZZ)$ is equivalent, up to polynomial-time reductions, to the problem of \emph{polynomial identity testing}.
This last is the decision problem of whether a given circuit over the polynomial ring $\ZZ[x_1, \ldots, x_n]$ evaluates to the zero polynomial~\cite[Theorem~4.16]{Loh14}. 
The existence of a polynomial-time algorithm for  polynomial identity testing is an outstanding open problem in the area of algebraic complexity theory.

On the other hand, the uncompressed word problem for $\SL(3, \ZZ)$ is polynomial time. 
\end{itemize}

\subsubsection{Knapsack problems over groups}

The uncompressed knapsack problem has been studied for various classes of groups; see~\cite{FrenkelNU15, GanardiKLZ18, KoenigLohreyZetzsche2015a}.
For non-elementary hyperbolic groups, the knapsack problem lies in $\LogCFL$ (the logspace closure of the class of context-free languages); see~\cite[Theorem~4.1]{Loh2019}. 
The second author further shows, in~\cite[Theorem~3.1]{LohreyZ18}, that the compressed knapsack problem for every virtually special group belongs to $\NP$. 

\subsubsection{Compressing integers}

In addition to straight-line programs, there are other methods of compression that arise in significant ways in computational group theory.  
Here we will mention just a few with particular relevance to the word problem.  
These are techniques for recording extremely large integers, as opposed to recording long words.  

The binary representation of an integer $n$ can be translated into a straight-line program $\calG_n$ of size $O(\log n)$ with output $a^n$.  
Following our discussion of circuit evaluation above, we could replace ``concatenation of strings'' by the primitive operator ``addition of integers''.  Likewise, we replace the alphabet $\{a\}$ by the alphabet $\{1\}$.  This transforms the straight-line program $\calG_n$ into an additive circuit with output $n$.  

If we allow multiplication as well as addition gates we obtain \emph{arithmetic circuits}.  
For example, a circuit with $n$ gates can produce an integer of size $2^{2^n}$ using iterated squaring.
\emph{Power circuits}, the topic of~\cite{MyUsWo}, replace multiplication of $x$ and $y$ by the operation $x \cdot 2^y$: 
that is, shifting the first input by the second.  
Thus a power circuit of depth $n$ can represent an integer of the size of a tower of exponentials of height $n$.
The same authors use their new theory, in~\cite{MyUsWo11}, to show that the word problem in Baumslag's group~\cite{Baumslag69}
\[
\group{a, b \,}{ \left( b^{-1} a b \right)^{-1} a \left( b^{-1} a b \right) = a^2 }
\]
is polynomial time. Recently the complexity has been further improved to the parallel complexity class $\textsc{TC}^1$ \cite{MattesW22}.
We note that Baumslag's group has a non-elementary Dehn function \cite{Plat04}; 
this demonstrates one of the many possible separations between the computational and the geometric theories of a group. 

Again exploiting various properties of power circuits, the authors of~\cite{DiekertLU12} give a cubic time algorithm for the word problem in Baumslag's group~\cite[Theorem~16]{DiekertLU12}.  They also show that the word problem for Higman's group~\cite{Higman51}
\[
\group{a, b, c, d}{ b^{-1}ab = a^2, c^{-1}bc = b^2, d^{-1}cd = c^2, a^{-1}da = d^2 }
\]
is polynomial time~\cite[Theorem~19]{DiekertLU12}.

Of course, even more extreme compression is possible and this leads to polynomial-time algorithms for even more extreme groups.  
The authors of \cite{DisonER16, DisonER18}  construct certain HNN--extensions of the \emph{hydra groups}~\cite{DisonRiley13}
\[
H_k = \group{a, b}{ [ \cdot \! \cdot \! \cdot [[a, b], b], \ldots , b] = 1 },
\]
where $[x, y] = x^{-1} y^{-1} x y$ is a \emph{commutator} and $[ \cdot \! \cdot \! \cdot [[a, b], b], \ldots , b]$
 is a nested commutator of depth $k$, for which the Dehn function grows roughly like the Ackermann function 
and the word problem is still solvable in polynomial time. For this, they use a compression scheme for integers that yields a compression ratio
of order of the Ackermann function on some integers.

\subsection{Acknowledgements}
An extended abstract of this paper appeared in \cite{HoltLS19}.
The third author thanks the first two for their patience during the writing of this paper. 

\section{General notation}
\label{sec-notation}

We include zero in the set of natural numbers; that is, $\NN = \{0, 1, 2, \ldots \}$.  

\subsection{Words}

Suppose that $\Sigma$ is an \emph{alphabet}; the elements of $\Sigma$ are called \emph{letters}.  
We write $\Sigma^*$ for the \emph{Kleene closure} of $\Sigma$; that is, the set of all finite words over $\Sigma$.  
We call any subset $L \subseteq \Sigma^*$ a \emph{language} over $\Sigma$. 

For any alphabet $\Sigma$, we use $\emptyword \in \Sigma^*$ to denote the \emph{empty word}.  
Suppose that $u$, $v$, and $w$ are words over $\Sigma$.   
We denote the \emph{concatenation} of $u$ and $v$ by $u \cdot v$; we often simplify this to just $uv$
($u \cdot v$ is sometimes preferred for better readability).
So, for example, $w \cdot \emptyword = \emptyword \cdot w = w$. 
We say that $u$ is a \emph{factor} of $v$ if there are words $x$ and $y$ so that $v = xuy$.
We say that $u$ is a \emph{rotation} of $v$ if there are words $x$ and $y$ so that $v = x y$ and $u = y x$.  
We have the following easy but useful result. 

\begin{lemma}
\label{lem-rotation}
Let $u$ and $v$ be words over $\Sigma$.  
Then $u$ is a rotation of $v$ if and only if $|u| = |v|$ and $u$ is a factor of $v \cdot v$.
\qed
\end{lemma}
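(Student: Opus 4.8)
The plan is to prove the two implications separately, each by a short direct argument that only involves bookkeeping of lengths together with the block structure of $v \cdot v$.

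For the forward implication I would start from the definition: if $u$ is a rotation of $v$, pick words $x$ and $y$ with $v = x y$ and $u = y x$. Then $|u| = |y| + |x| = |x| + |y| = |v|$ at once, and since $v \cdot v = x y x y = x \cdot (y x) \cdot y$, the word $u = y x$ occurs as a factor of $v \cdot v$, witnessed by the decomposition with $x$ on its left and $y$ on its right. This direction is immediate and needs no case analysis.

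For the converse I would assume $|u| = |v|$ and that $u$ is a factor of $v \cdot v$, so $v \cdot v = p \, u \, q$ for some words $p$ and $q$. Counting letters gives $|p| + |u| + |q| = 2|v|$, hence $|p| + |q| = |v|$ and in particular $|p| \le |v|$. Thus $p$ is a prefix of the first copy of $v$, so I can write $v = p x$ with $|x| = |v| - |p| = |q|$. Substituting, $v \cdot v = p x p x$, and the factor of length $|v|$ that begins immediately after the prefix $p$ is exactly $x$ followed by the first $|p|$ letters of the second copy of $v$, namely $x \cdot p$. Since $u$ is precisely that factor, $u = x p$; together with $v = p x$ this exhibits $u$ as a rotation of $v$.

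The only place requiring any care — and it is minor — is the indexing in the converse: one must use $|p| \le |v|$ to be sure that $p$ does not run past the end of the first block $v$, and then correctly read off which letters of $p x p x$ form the length-$|v|$ window starting at position $|p|$. Everything else is routine.
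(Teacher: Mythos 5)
Your proof is correct, and it is exactly the standard argument the paper has in mind (the lemma is stated with its proof omitted as immediate): the forward direction from $v=xy$, $u=yx$ giving $vv = x(yx)y$, and the converse by locating the length-$|v|$ window at offset $|p|\le|v|$ inside $v\cdot v = pxpx$ to read off $u = xp$. Nothing further is needed.
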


Suppose that $w = a_0 \cdot a_1 \cdots a_{n-1}$ lies in $\Sigma^*$, where the $a_i$ are letters.  
Then we define $|w|$ to be the \emph{length} of $w$; that is, $|w| = n$.  
For any $i$ between zero and $n-1$ (inclusive) we define $w[i] = a_i$.  
Note that the empty word $\emptyword$ is the unique word of length zero. 

We now define the \emph{cut operators}.  
Let $w$ be a word, as above, and let $i$ and $j$ be indices with $0 \leq i \leq j \leq n = |w|$.  
We define $w[i:j] = a_i \cdots a_{j-1}$.  If $0 \leq i \leq j \leq |w|$ does not hold then $w[i:j]$ is not defined.
We use $w[:j]$ to denote $w[0:j]$, the \emph{prefix} of length $j$.  
We use $w[i:]$ to denote $w[i:n]$, the \emph{suffix} of length $n - i$.  
Note that $w[i:i] = \emptyword$ and $w = w[:i] \cdot w[i:]$.

Suppose that $\Sigma$ is a finite alphabet equipped with a total order $<$ (the concrete choice of $<$ will never be important for us).

\begin{definition}
\label{def-shortlex}
We define the \emph{shortlex order} on $\Sigma^*$ as follows.  For words $u, v$ we have $u <_\slex v$ if 
\begin{itemize}
\item 
$|u| < |v|$ or
\item 
$|u| = |v|$ and there are words $x, y, z \in \Sigma^*$ and letters $a, b \in \Sigma$ so that 
\begin{itemize}
\item
$u = xay$, 
\item
$v = xbz$, and 
\item
$a < b$.
\end{itemize}
\end{itemize}
\end{definition}

Note that shortlex is a \emph{well-order} on $\Sigma^*$: that is, every nonempty subset of $\Sigma^*$ has a unique shortlex least element. 

\subsection{Finite state automata}

We refer to~\cite{HoUl79} for background in automata theory. 
A \emph{(deterministic) finite state automaton} is a tuple $M = (Q, \Sigma, q_0, \delta, F)$, where 
\begin{itemize}
\item
$Q$ is a finite set of \emph{states}, 
\item
$\Sigma$ is a finite alphabet, 
\item
$q_0 \in Q$ is the \emph{initial} state, 
\item
$\delta \from Q \times \Sigma \to Q$ is a \emph{transition} function, and
\item
$F \subseteq Q$ is the set of \emph{accept} states. 
\end{itemize}

Intuitively, if the automaton $M$ is ``in'' state $q \in Q$, and receives input $a \in \Sigma$, then it transitions to the new state $\delta(q,a)$.
We extend $\delta$ to a function $\delta' \from Q \times \Sigma^* \to Q$ recursively.  That is, for any state $q$, word $w$, and letter $a$ we have
\begin{itemize}
\item
$\delta'(q, \emptyword) = q$ and
\item
$\delta'(q, wa) = \delta( \delta'(q, w), a)$.
\end{itemize}
Since $\delta$ and $\delta'$ agree on words of length at most one, we will suppress $\delta'$ in what follows and instead reuse $\delta$. 
We define 
\[
L(M) = \{ w \in \Sigma^* \st \delta(q_0, w) \in F \}
\]
to be the language \emph{accepted} by $M$.
Intuitively, if $w$ lies in $L(M)$ then $w$, when input into $M$, takes it from the initial state to an accept state. 

We say that a language $L \subseteq \Sigma^*$ is \emph{regular} if there exists a finite state automaton $M$ so that $L = L(M)$. 

\section{Hyperbolic groups} 
\label{sec-hyp}

We refer to~\cite{ABCFLMSS91} as a general reference on (word) hyperbolic groups.

Let $G$ be a finitely generated group.  
Let $1_G$ denote the identity element of $G$.  
Let $\Sigma$ be a finite, \emph{symmetric} generating set for $G$.  
That is, if $a$ lies in $\Sigma$ then so does $a^{-1}$.  
For two words $u, v \in \Sigma^*$, we will use $u =_G v$ to mean that $u$ and $v$ represent the same element of $G$.
We fix a total order $<$ on $\Sigma$.

The \emph{(right) Cayley graph} $\Gamma = \Gamma(G, \Sigma)$ of $G$ with respect to $\Sigma$ is defined as follows. 
\begin{itemize}
\item
The vertices of $\Gamma$ are the elements of $G$. 
\item
The undirected edges of $\Gamma$ are of the form $\{g, ga\}$ for $g \in G$ and $a \in \Sigma$.
\end{itemize}
We will label a directed edge $(g, ga)$ with the letter $a$.  
Note that $G$ acts, by graph automorphisms, on $\Gamma$ on the left.

Giving all edges length one makes $\Gamma$ into a geodesic metric space.  
We do this in such a way so that the action of $G$ is by isometries. 
The distance between two points $p, q$ is denoted $d_\Gamma(p, q)$.
For $g \in G$ we define $|g| = d_\Gamma(1,g)$.  
We deduce that $|g|$ is the smallest length among all words $w \in \Sigma^*$ that represent $g$. 
Fix $r \geq 0$.  
The ball of radius $r$ in $\Gamma$ is the set
\[
\Ball(r) = \Ball_\Gamma(r) = \{ g \in G : |g| \leq r \}
\]

Fix a word $w \in \Sigma^*$.  
We define $P_w \subseteq \Gamma$ to be the path starting at $1_G$ which is labelled by $w$. 
Thus the path $g \cdot P_w$ starts at $g$ and is again labelled by $w$.
In general, we will take $P \from [0, n] \to \Gamma$ to be an edge path from $P(0)$ to $P(n)$.  
In particular, we must allow real $t \in [0, n]$ as we traverse edges.  
We use $\bar{P}$ to denote $P$ with its parametrisation reversed.  
Note that $\bar{P}_w = g_w \cdot P_{w^{-1}}$.

We call a path $P$ \emph{geodesic} if for all real $t \ge 0$ we have $d_\Gamma(P(0), P(t)) = t$.  
Suppose that the word $w \in \Sigma^*$ represents the group element $g_w \in G$. 
We say that $w \in \Sigma^*$ is \emph{geodesic} if the path $P_w$ is geodesic.
We say that $w \in \Sigma^*$ is \emph{shortlex reduced} if for all $u \in \Sigma^*$ the equality $g_u = g_w$ implies $w \leq_\slex u$. 
We use $\slex(w)$ to denote the shortlex reduced representative of $g_w$. 

\begin{remark*}
Suppose that $w$ is geodesic or shortlex reduced.  
Suppose that $u$ is a factor of $w$.  
Then $u$ is also, respectively, geodesic or shortlex reduced.
\end{remark*}

A \emph{geodesic triangle} in $\Gamma$ consists of three \emph{vertices} $p, q, r \in G$ and three \emph{sides} $P, Q, R \subset \Gamma$. 
The sides are geodesic paths connecting the vertices; see Figure~\ref{fig-triangle}.
Fix $\delta \geq 0$.  
We now follow~\cite[Definition~1.3]{ABCFLMSS91}.  
We say that a geodesic triangle is \emph{$\delta$--slim}, if every point $x$ in the side $P$ is distance at most $\delta$ from some point of $R \cup Q$, and similarly for the sides $Q$ and $R$. 

Fix $G$ and $\Sigma$ as above.  
We say $G$ is \emph{$\delta$--hyperbolic} if  every geodesic triangle in the Cayley graph $\Gamma = \Gamma(G, \Sigma)$ is $\delta$--slim. 
Finally, we simply say $G$ is \emph{hyperbolic}, if it is  $\delta$--hyperbolic for some $\delta \geq 0$. 
For example, the group $G$ is $0$--hyperbolic (with respect to $\Sigma$) if and only if $G$ is a free group, freely generated by (half of) $\Sigma$. 

\begin{remark}
\label{rem-gromov}
In his article~\cite{Gro87}, Gromov proves that hyperbolic groups have many good properties.  In Corollary~2.3.B he states that such groups satisfy a linear isoperimetric inequality; hence they have solvable word problem.  In Corollary~2.3.E he shows that the notion of hyperbolicity is independent of the choice of finite generating set.  In Section~7.4.B he proves that they have solvable conjugacy problem.  For another exposition of these results (excepting the conjugacy problem) we refer to~\cite{ABCFLMSS91}: see Theorem~2.5, Proposition~2.10, and Theorem~2.18 of that work.  For an exposition of the conjugacy problem we refer to~\cite{EpsteinH06}.
\end{remark}

We will need a seemingly stronger condition on our geodesic triangles, called \emph{$\delta$--thinness}.  
We here follow~\cite[Definition~1.5]{ABCFLMSS91}.
Suppose again that we have a geodesic triangle with vertices $p, q, r \in G$ and with sides $P, Q, R \subset \Gamma$; see Figure~\ref{fig-triangle}.  
Let $c_P \in P$, $c_Q \in Q$, and $c_R \in R$ be the unique points so that
\[ 
d_\Gamma(p, c_Q) = d_\Gamma(p, c_R), \quad d_\Gamma(q, c_R) = d_\Gamma(q, c_P), \quad d_\Gamma(r, c_P) = d_\Gamma(r, c_Q)
\]
We call these the \emph{meeting points} of the triangle.  
Note that the meeting points may be elements of $G$ or midpoints of edges of $\Gamma$. 
Suppose that $x \in P$ and $y \in Q$ are points with 
\begin{itemize}
\item
$d_\Gamma(r, x) = d_\Gamma(r, y) = t$ and 
\item
$t \leq d_\Gamma(r, c_P) = d_\Gamma(r, c_Q)$. 
\end{itemize}
Then we call $x$ and $y$ \emph{corresponding points} with respect to $r$.  
Note that if one of $x$ or $y$ lies in $G$ then so does the other. 
We make the same definition with respect to the vertices $p$ and $q$. 
Note that the three meeting points are all in correspondence.
Fix $\delta \geq 0$.  
The triangle is called \emph{$\delta$--thin} if for all corresponding pairs $(x, y)$ we have $d_\Gamma(x,y) \leq \delta$.
See Figure~\ref{fig-triangle}; there the dotted arcs indicate corresponding pairs. 
Note that a $\delta$--thin triangle is $\delta$--slim.  
A converse also holds: every geodesic triangle in a $\delta$--hyperbolic space is $4\delta$--thin; see~\cite[Proposition 2.1]{ABCFLMSS91}. 

\begin{figure}[t]
  \centering{
    \scalebox{1}{
      \begin{tikzpicture}

        \tikzstyle{small} = [circle, draw = black, fill = black,inner sep=.25mm]
        \tikzstyle{smaller} = [circle, draw = black,fill=black,inner sep=.05mm]
        \tikzstyle{tiny} = [circle, draw = black,inner sep=.0mm]
        \tikzstyle{zero} = [circle, inner sep=0mm]

        \def\pq{185.0166}
        \def\pr{152.72462}
        \def\qr{164.06123}
        
        \def\p{86.84}
        \def\q{98.1766}
        \def\r{65.8846}

        \node (p) at (0,0) [small, label=left:$p$] {};
        \node (q) at (6,0) [small, label=right:$q$] {};
        \node (r) at (2,4) [small, label=above:$r$] {};
        \node (cp) at (3.11,1.92) [smaller, label=right:$c_P$] {};
        \node (cq) at (2.5,1.77) [smaller, label=left:$c_Q$] {};
        \node (cr) at (2.8,1.275) [smaller, label=below:$c_R$] {};

        \draw[decoration={markings, mark=at position \p-15 pt with \node[zero] (pq-1) {};, mark=at position \p-10 pt with \node[zero] (pq0) {};, mark=at position \p-5 pt with \node[zero] (pq1) {};, mark=at position \p pt with \node[tiny] (pq2) {};, mark=at position \pq-\q pt with \node[zero] (qp2) {};, mark=at position \p+5 pt with \node[zero] (qp1) {};, mark=at position \p+10 pt with \node[zero] (qp0) {};, mark=at position \p+15 pt with \node[zero] (qp-1) {};}, postaction={decorate}] 
(p) .. controls (3,1.7)  .. node[pos=.5,below=5mm] {$R$} (q);

        \draw[decoration={markings, mark=at position \p-15 pt with \node[zero] (pr-1) {};, mark=at position \p-10 pt with \node[zero] (pr0) {};, mark=at position \p-5 pt with \node[zero] (pr1) {};, mark=at position \p pt with \node[tiny] (pr2) {};, mark=at position \pr-\r pt with \node[zero] (rp2) {};, mark=at position \p+5 pt with \node[zero] (rp1) {};, mark=at position \p+10 pt with \node[zero] (rp0) {}; , mark=at position \p+15 pt with \node[zero] (rp-1) {};}, postaction={decorate}] 
(p) .. controls (3,1.7)  .. node[pos=.5,left=8mm] {$Q$} (r);

        \draw[decoration={markings, mark=at position \q-15 pt with \node[zero] (qr-1) {};, mark=at position \q-10 pt with \node[zero] (qr0) {};, mark=at position \q-5 pt with \node[zero] (qr1) {};, mark=at position \q pt with \node[tiny] (qr2) {};,mark=at position \qr-\r pt with \node[zero] (rq2) {};, mark=at position \q+5 pt with \node[zero] (rq1) {};, mark=at position \q+10 pt with \node[zero] (rq0) {};, mark=at position \q+15 pt with \node[zero] (rq-1) {};}, postaction={decorate}] 
(q) .. controls (3,1.7)  .. node[pos=.55,right=8mm] {$P$} (r);

        \draw[densely dotted, bend angle=20, bend right] (pq-1) to  (pr-1);
        \draw[densely dotted, bend angle=20, bend left] (qp-1) to  (qr-1);
        \draw[densely dotted, bend angle=20, bend right] (rp-1) to  (rq-1);
        
        \draw[densely dotted, bend angle=20, bend right] (pq0) to  (pr0);
        \draw[densely dotted, bend angle=20, bend left] (qp0) to  (qr0);
        \draw[densely dotted, bend angle=20, bend right] (rp0) to  (rq0);

        \draw[densely dotted, bend angle=20, bend right] (pq1) to  (pr1);
        \draw[densely dotted, bend angle=20, bend left] (qp1) to  (qr1);
        \draw[densely dotted, bend angle=20, bend right] (rp1) to  (rq1);

        \draw[densely dotted, bend angle=20, bend right] (pq2) to  (pr2);
        \draw[densely dotted, bend angle=20, bend left] (qp2) to  (qr2);
        \draw[densely dotted, bend angle=20, bend right] (rp2) to  (rq2);

      \end{tikzpicture}}}
  \caption{ A geodesic triangle in a hyperbolic metric space.  Note
    how the three sides ``bow in'' to a common centre.  Dotted lines
    represent paths of length at most $\delta$ between corresponding
    points. }
  \label{fig-triangle}
\end{figure}
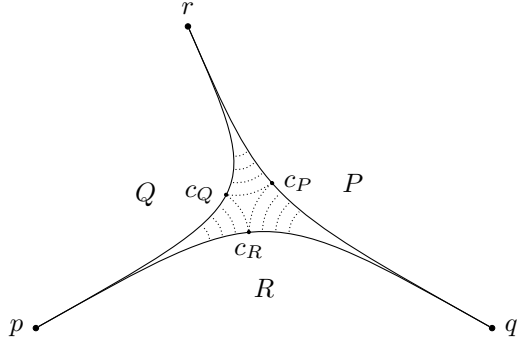

We now fix a group $G$ and a symmetric generating set $\Sigma$; we assume that $G$ is $\delta$--hyperbolic.  
We choose $\delta$ large enough to ensure that all geodesic triangles in $\Gamma$ are $\delta$--thin.  

\begin{remark*}
From a computational viewpoint, hyperbolic groups have many nice properties.  For example, their word problems can be solved in linear time~\cite[Theorem~2.18]{ABCFLMSS91} as can their conjugacy problems~\cite{EpsteinH06}.  (Here we gloss over the details of the required model of computation.)  In a more recent and noteworthy achievement, their isomorphism problem has also been solved; see~\cite{DaGui10, DaGui11}.  Thus all three of Dehn's fundamental problems have been settled positively for hyperbolic groups.  

Other positive results include the \emph{simultaneous conjugacy problem}~\cite{BrHo05,BuckleyHolt13} and the \emph{knapsack problem}~\cite{Loh2019}.  We will return to both of these below. 

Note that the compressed word problem easily reduces to the problem of checking the solvability of a system of equations. 
There is a substantial body of work on the latter, over hyperbolic groups.
Dahmani and Guirardel~\cite{DaGui10} prove (building on earlier work of~\cite{RiSe95}) that the problem is decidable.
The compressibility by straight-line programs of solutions of equations in hyperbolic groups is studied in~\cite{DiekertKM13}.
Ciobanu and Elder~\cite{CiobanuElder19} give a complete description of the set of all solutions of a given system of 
equations over a hyperbolic group.  
They obtain, as a corollary, a polynomial-space algorithm for deciding the existential theory of a hyperbolic group. 
\end{remark*}

The following results come from the fact that hyperbolic groups have automatic structures with respect to any shortlex ordering~\cite[Theorem~3.4.5 and Corollary~2.5.2]{ECHLPT}. 

\begin{lemma}[\mbox{\cite[Theorem~2.3.10]{ECHLPT}}] 
\label{lem-shortlex-reduction}
There is a polynomial-time (in fact, quadratic) algorithm that, given a word $w \in \Sigma^*$, produces $\slex(w)$.  \qed
\end{lemma}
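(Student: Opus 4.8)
The plan is to unpack the theory of automatic groups. A hyperbolic group $G$ is \emph{shortlex automatic} with respect to the chosen ordering $<$ on $\Sigma$, and this is the substance of the cited result. Since $G$ and $\Sigma$ are fixed, our algorithm may carry, hard-wired, a finite state automaton $W$ over $\Sigma$ accepting exactly the shortlex reduced words, together with, for each $a \in \Sigma \cup \{\emptyword\}$, a \emph{multiplier automaton} $M_a$ that reads two words $u$ and $v$ in parallel (padding the shorter with a dummy symbol) and accepts precisely when $u$ and $v$ are both shortlex reduced and $u a =_G v$. These automata are built using the \emph{fellow traveller property}: there is a constant $k = k(G, \Sigma)$ so that whenever $u a =_G v$ with $u$, $v$ shortlex reduced, the paths $P_u$ and $P_v$ stay within distance $k$ of one another; in particular $|u|$ and $|v|$ differ by at most $k$.

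The core subroutine takes a shortlex reduced word $u$ and a letter $a \in \Sigma$ and returns $\slex(u a)$. Since $u$ is shortlex reduced we have $|u| = |g_u|$, so $|\slex(u a)| = |g_u a| \le |u| + 1$. The graph of the map $u \mapsto \slex(u a)$ is the language accepted by $M_a$: it is a rational relation, it is a function, and its output length is bounded linearly in the input length. Such a function can be evaluated in linear time. After an offline, constant-size precomputation of the ``live'' states of $M_a$ (those from which an accept state is still reachable once the first track has consumed all of $u$), one scans $u$ from left to right; at each position the requirement of remaining on some path to acceptance determines the next letter of $v = \slex(u a)$ uniquely, because the accepting path is unique. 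Hence the subroutine runs in time $O(|u|)$.

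With the subroutine in place, the algorithm processes the input word $w = a_1 a_2 \cdots a_n$ from left to right, maintaining $u_i = \slex(a_1 a_2 \cdots a_i)$: it starts with $u_0 = \emptyword$ and computes $u_i = \slex(u_{i-1}\, a_i)$ for $i = 1, 2, \ldots, n$. Each $u_{i-1}$ has length at most $i - 1$, so computing $u_i$ costs $O(i)$, whence the total running time is $\sum_{i=1}^{n} O(i) = O(n^2)$. The output is $u_n = \slex(w)$, as required.

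I expect the genuine obstacle to be the imported input: proving that hyperbolic groups are shortlex automatic, that is, that the shortlex language is regular and each multiplier relation is rational, with a uniform fellow-travelling bound. This is exactly where hyperbolicity enters, via the $\delta$--thinness of geodesic triangles forcing fellow-travelling geodesics to have bounded word differences. Granting that, the two remaining points one should pin down carefully are that a rational function with linearly bounded output is evaluable in linear time, and that the relevant path through $M_a$ can be tracked online because it is unique --- both routine, but needed to secure the stated quadratic bound.
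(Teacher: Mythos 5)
Your argument is essentially the proof behind the paper's citation: the paper offers no proof of its own, simply quoting \cite[Theorem~2.3.10]{ECHLPT}, and that theorem is proved exactly as you describe --- shortlex automaticity of hyperbolic groups, then letter-by-letter reduction using the multiplier automata, each step linear and the total quadratic. One small caution: the per-letter step is not tracked ``online'' by uniqueness alone; one first does a backward reachability pass over $M_a$ against the remaining letters of $u$ (still $O(|u|)$) and then the forward greedy choice of the letters of $v$ is forced, which is the standard way to secure the linear bound you invoke.
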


\begin{lemma}[\mbox{\cite[Proposition~2.5.11 and Theorem~3.4.5]{ECHLPT}}]
\label{lem-shortlex-regular}
The languages in $\Sigma^*$, of geodesic words and of shortlex reduced words, are regular.  \qed
\end{lemma}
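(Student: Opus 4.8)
The plan is to realise each of the two languages as the accepted language of a deterministic finite automaton that reads its input one letter at a time while carrying a bounded amount of ``local geometric data'' about the prefix read so far, arranged so that the input is geodesic (respectively shortlex reduced) precisely when the run never enters a distinguished failure state. The entire argument then reduces to showing that only finitely many such pieces of data occur; the transition rules themselves are formal.

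For the geodesic language I would use Cannon's \emph{cone types}. The cone type of $g \in G$ is $C(g) = \{ w \in \Sigma^* : |g g_w| = |g| + |w| \}$, the set of words that prolong a geodesic ending at $g$. Two facts are needed. First, a hyperbolic group has only finitely many cone types; this is Cannon's theorem (see \cite{ECHLPT}). Second, whenever $a \in C(g)$ the cone type $C(ga)$ equals the left quotient $\{ w \in \Sigma^* : aw \in C(g)\}$, directly from the definition, so once finiteness is in hand this is pure bookkeeping. I would then take the automaton whose states are the finitely many cone types together with an absorbing failure state $\bot$, with start state $C(1_G)$, with transition sending $(C(g),a)$ to $C(ga)$ when $a \in C(g)$ and to $\bot$ otherwise, and with every cone type an accept state. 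An induction on the length of the input shows that, after reading $w$, the run is in state $C(g_w)$ if $w$ is geodesic and in $\bot$ otherwise --- using that every prefix of a geodesic word is geodesic. Hence the accepted language is exactly the set of geodesic words, so that language is regular.

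For the shortlex language I would refine this. Put $C_\slex(g) = \{ w \in \Sigma^* : \slex(g)\,w \text{ is shortlex reduced}\}$; note $C_\slex(g) \subseteq C(g)$. The transition is again formal: if $\slex(g)\,a$ is shortlex reduced then $\slex(ga) = \slex(g)\,a$ by uniqueness of shortlex representatives, whence $C_\slex(ga) = \{ w : aw \in C_\slex(g)\}$, the left quotient of $C_\slex(g)$ by $a$. So, once more, the only thing to prove is that the sets $C_\slex(g)$ are finite in number --- which is essentially the statement that $G$ is shortlex automatic. I would establish this finiteness by a thinness argument in the spirit of Cannon's: any geodesic word representing the same element as $\slex(g)\,w$ and lexicographically smaller than it must, at the first letter where it drops below $\slex(g)\,w$, lie within $2\delta$ of the corresponding point on the geodesic $\slex(g)\,w$, since geodesic bigons in $\Gamma$ are uniformly thin; and because $\slex(g)$ is already the lexicographically least geodesic to $g$, whether such a competitor exists can be read off from a labelled ball of bounded radius about $g$, enriched with the edge directions that point towards $1_G$ and with a bounded amount of lexicographic data, of which there are only finitely many types. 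Granting this, the automaton has states the sets $C_\slex(g)$ together with $\bot$, start state $C_\slex(1_G)$, transition from $C_\slex(g)$ on $a$ to $C_\slex(ga)$ when $\slex(g)\,a$ is shortlex reduced and to $\bot$ otherwise, and all $C_\slex(g)$ accepting. Using the elementary fact that every prefix of a shortlex reduced word is itself shortlex reduced, an induction on the length of the input shows the run on $w$ avoids $\bot$ exactly when every prefix of $w$ --- in particular $w$ itself --- equals the shortlex representative of its group element, that is, exactly when $w$ is shortlex reduced. This automaton is just the word acceptor of a shortlex automatic structure on $G$.

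The main obstacle in both cases is the finiteness statement --- finiteness of cone types, and of shortlex cone types --- which is where $\delta$--thinness of triangles, and thinness of bigons, does the real work; the construction of the automata and the inductions identifying their languages are routine once finiteness is in place. I would also record that the whole construction is effective given $\delta$ and a solution to the word problem for $G$ (Remark~\ref{rem-gromov}): the relevant enriched types can be enumerated, and membership of a word in a cone type decided, algorithmically. This is what yields the computational form of the statement needed in~\cite{ECHLPT}.
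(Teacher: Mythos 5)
The paper does not actually prove this statement: it is quoted from \cite{ECHLPT} (the point being that hyperbolic groups are shortlex automatic), so what you have written is a reconstruction of the omitted argument rather than a parallel of anything in the text. Your reconstruction follows the standard route and is essentially correct. For geodesics, the cone-type automaton is exactly Cannon's argument: the left-quotient identity $C(ga)=\{w : aw\in C(g)\}$ for $a\in C(g)$ makes the transitions well defined on cone types, and finiteness of cone types (the real content, via thinness) gives regularity. For shortlex words, your reduction is also clean: since every prefix of a shortlex reduced word is shortlex reduced, the left quotients of the shortlex language are exactly the sets $C_\slex(g)$ (plus $\emptyset$), and the identity $C_\slex(ga)=\{w: aw\in C_\slex(g)\}$ when $\slex(g)a$ is shortlex reduced means that regularity is literally equivalent, via Myhill--Nerode, to there being finitely many such sets.

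The one place where your sketch is softer than it should be is that finiteness claim. As stated, ``whether such a competitor exists can be read off from a labelled ball of bounded radius about $g$'' is stronger than what is true on its face: the existence of a lexicographically smaller geodesic to a point near $g$ is a global comparison of long words, not literally local data. What you actually need (and what thin bigons give you) is that $C_\slex(g)$ is determined by finitely-valued data attached to $g$: for a geodesic competitor $v$ to $\slex(g)w$, synchronous fellow traveling puts $v$ at time $|g|$ at a point $gc$ with $c\in\Ball(2\delta)$, and the first lexicographic drop occurs either in the first $|g|$ letters or after them. Hence $w\in C_\slex(g)$ is decided by (i) the cone type $C(g)$, (ii) the subset of those $c\in\Ball(2\delta)$ for which some geodesic word of length $|g|$ ending at $gc$ is lexicographically smaller than $\slex(g)$, and (iii) conditions on $w$ alone; since (i) and (ii) take only finitely many values, finiteness of the $C_\slex(g)$ follows, and no ``ball-locality'' of the data is needed because well-definedness of the transitions is already guaranteed by the quotient identity. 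Alternatively, the route taken in \cite{ECHLPT} avoids this bookkeeping: the set of padded pairs of equal-length geodesics representing the same element and fellow traveling within a fixed constant is regular (word-difference automaton), and the shortlex language is obtained from the geodesic language by removing the projection of the pairs in which the second coordinate is lexicographically smaller, using closure of regular languages under projection and complement. Either way your proposal is completable; only the justification of finiteness needs to be tightened along these lines.
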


\begin{remark*}
We will in fact need both geodesic and shortlex reduced words in our proof of Theorem~\ref{thm-SLP-for-shortlex}.  
This is because the inverse of a geodesic word is again geodesic; this need not be the case for shortlex reduced words.  
On the other hand, shortlex reduced words provide unique representatives of group elements; 
this is almost never the case for geodesic words.
\end{remark*}

\begin{figure}
  \centering{
    \scalebox{1}{
      \begin{tikzpicture}
        \tikzstyle{small} = [circle,draw=black,fill=black,inner sep=.15mm]
        \tikzstyle{zero} = [circle,inner sep=0mm]

        \node[small] (1) {} ;
        \node[small,  right = 3cm of 1] (a) {} ;
        \node[small,  above = 0.68cm of a] (b) {} ;
        \node[small,  right = 6cm of 1] (2) {};
        \node[small,  above = 1.5cm of 1] (3) {};
        \node[small,  right = 6cm of 3] (4) {};
    
        \node[zero,  below = .6mm of 1] (1') {};
        \node[zero,  below = .6mm of 2] (2') {};
        \node[zero,  above = .6mm of 3] (3') {};
        \node[zero,  above = .6mm of 4] (4') {};
      
        \draw [->] (3) to [out=-27, in=-180] node[pos = 0.5, below = -0.7mm] {$v'$} (b);
        \draw [->] (b) to [out=0, in=-153] node[pos = 0.5, below = -0.7mm]  {$v''$} (4);
        \draw [->] (3') to [out=-27, in=-153] node[pos = 0.33, above = -0.7mm]{$v$} (4'); 
      
        \draw [->] (3) edge node[left=-.7mm]{$a$} (1);
        \draw [<-] (2) edge node[right=-.7mm]{$b$} (4);
        \draw [->] (1') edge node[pos = 0.33, below=-.7mm]{$u$} (2');
        \draw [->] (1) edge node[above=-.7mm]{$u'$} (a);
        \draw [->] (a) edge node[above=-.7mm]{$u''$} (2);
        \draw [<-] (a) edge node[left=-.7mm]{$c$} (b);

  \end{tikzpicture}}}
  \caption{Splitting a geodesic quadrilateral according to Lemma~\ref{lem-quad}.}
  \label{fig-quad}
\end{figure}
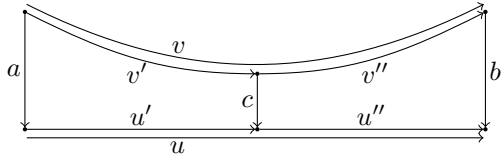

We will need the following standard lemma on geodesic quadrilaterals.  See, for example, the proof of~\cite[Proposition~3.5]{ABCFLMSS91}.

\begin{lemma} 
\label{lem-quad}
Let $a, b, u, v \in \Sigma^*$ be geodesic words such that $v b =_G a u$.  
Suppose that $u$ has a factorisation $u = u' u''$ with $|u'| \geq |a| + 2 \delta$ and $|u''| \geq |b| + 2 \delta$.
Then there exists a factorisation $v = v' v''$ and a geodesic word $c$ so that
\begin{itemize}
\item
$|c| \leq 2\delta$,
\item
$v' c =_G a u'$, and
\item
$v'' b =_G c u''$. 
\end{itemize}
\end{lemma}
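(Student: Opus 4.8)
The plan is to cut the geodesic quadrilateral determined by $vb =_G au$ with a diagonal and then apply slimness (or thinness) of the two resulting triangles, once for each triangle. Put $O = 1_G$, $A = g_a$, $C = g_{au} = g_{vb}$ and $V = g_v$. Then the quadrilateral has sides $P_a$ (from $O$ to $A$), $A \cdot P_u$ (from $A$ to $C$), the reverse of $V \cdot P_b$ (the ``$b$--side'', from $C$ to $V$), and $\bar P_v$ (from $V$ to $O$); all four are geodesic, since $a,b,u,v$ are geodesic words. Mark the point $U' := g_{au'}$: it lies on the side $A \cdot P_u$, at distance $|u'|$ from $A$ and at distance $|u''|$ from $C$. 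Choose a geodesic word $d$ with $d =_G au$ and let $P_d$ be the corresponding geodesic from $O$ to $C$; this diagonal splits the quadrilateral into a triangle $\Delta_1$ with vertices $O, A, C$ (sides $P_a$, $A\cdot P_u$, $\bar P_d$) and a triangle $\Delta_2$ with vertices $O, C, V$ (sides $P_d$, the $b$--side, $\bar P_v$).

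First I would locate a point of $P_d$ close to $U'$. Applying $\delta$--slimness of $\Delta_1$ to the point $U'$ on the side $A\cdot P_u$: its nearest point on the other two sides cannot lie on $P_a$, since any $w \in P_a$ satisfies $d_\Gamma(U', w) \geq d_\Gamma(U', A) - d_\Gamma(A, w) \geq |u'| - |a| \geq 2\delta > \delta$ (the case $\delta = 0$, i.e.\ $G$ free, is elementary and handled directly). Hence there is $D_1 \in P_d$ with $d_\Gamma(U', D_1) \leq \delta$. Next I would push $D_1$ onto $P_v$: apply $\delta$--slimness of $\Delta_2$ to $D_1 \in P_d$. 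The nearest point lies on $\bar P_v$ or on the $b$--side; in the latter case, if it is the point $w$ with $d_\Gamma(C, w) = s \leq |b|$, then $d_\Gamma(D_1, C) \leq s + \delta \leq |b| + \delta$, whereas $d_\Gamma(D_1, C) \geq d_\Gamma(U', C) - \delta \geq |u''| - \delta \geq |b| + \delta$, forcing $s = |b|$, i.e.\ $w = V$, which also lies on $\bar P_v$. Either way we obtain $V' \in P_v$ with $d_\Gamma(D_1, V') \leq \delta$, hence $d_\Gamma(U', V') \leq 2\delta$. Replacing $\delta$ by $\ceiling{\delta}$ at the outset (which only weakens the slimness hypothesis) lets us take $V'$ to be a vertex of $\Gamma$ lying on $P_v$, so $V' = g_{v'}$ for a prefix $v'$ of $v$; set $v = v' v''$. (Alternatively one uses $\delta$--thinness, whose point correspondence carries group elements to group elements, to produce $D_1$ and then $V'$ directly as vertices, the length inequalities guaranteeing that $U'$, $D_1$ fall in the relevant ``corresponding'' portions of their sides.)

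Finally I would choose $c$ to be a geodesic word representing $g_{v'}^{-1}\, g_{au'}$, so that $|c| = d_\Gamma(V', U') \leq 2\delta$. The words $v'c$ and $au'$ both label paths in $\Gamma$ from $O$ to $U' = g_{au'}$, so $v'c =_G au'$. The words $v''b$ and $cu''$ both label paths from $V'$ to $C$ --- the first running $V' \to V \to C$ using $v = v'v''$ and $g_{vb} = C$, the second running $V' \to U' \to C$ using $g_{au'u''} = g_{au} = C$ --- so $v''b =_G cu''$. This gives all three conclusions.

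The main obstacle is the middle step: getting the length bookkeeping precise enough to exclude the ``wrong'' side of $\Delta_2$ --- the estimate there is tight, so the borderline situation in which the nearest point is exactly the shared vertex $V$ must be argued through --- together with the minor but necessary care to ensure that the resulting point of $P_v$ is an actual vertex, so that it determines an honest factorization of the word $v$.
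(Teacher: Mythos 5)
Your argument is correct and is essentially the paper's proof: the paper simply invokes the $2\delta$--slimness of geodesic quadrilaterals (which is exactly what your diagonal splitting plus two applications of triangle slimness establishes) and then rules out the sides labelled $a$ and $b$ by the same triangle-inequality estimates you use, so the two arguments coincide in substance. Your added care about landing on an actual vertex of $P_v$ (so that $v$ genuinely factorizes as $v'v''$) addresses a point the paper glosses over, and your handling of it is fine.
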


\begin{proof}
We consider the quadrilateral with sides $P_a$, $g_a \cdot P_u$, $g_{v} \cdot P_b$, and
$P_v$.  
Here $g_w$ is the group element represented by a word $w$.  
We are given a factorisation $u = u' u''$.  Set $g = g_a g_{u'}$ and note that $g$ lies in $g_a \cdot P_u$.  
See Figure~\ref{fig-quad}, where we label a path by the word labelling the path (for instance, $g_a \cdot P_u$ is labelled with $u$).
Since geodesic quadrilaterals are $2\delta$--slim there is a group element $h$ with $d_\Gamma(g, h) \leq 2\delta$ lying in the union of the three other sides.

We now consider cases. 
Suppose that $h$ lies in $P_a - \{1_G\}$.  
Then the triangle inequality implies $|u'| < |a| + 2\delta$.
Similarly, if $h$ lies in $g_{v} \cdot P_b - \{g_{v}\}$, then $|u''| < |b| + 2\delta$.  
Both of these are contrary to hypothesis. 
We deduce that $h$ lies in $P_v$, proving the lemma. 
\end{proof}

The lemma has a useful corollary.

\begin{corollary} 
\label{cor-quad-two}
Let $a, b, u, v \in \Sigma^*$ be geodesic words such that $v b =_G a u$.  
Suppose that $u$ has a factorisation $u = u' u'' u'''$ with $|u'| \geq |a| + 2\delta$, $|u''| \geq 4\delta$, and $|u'''| \geq |b|+2\delta$.
Then there exists a factorisation $v = v' v'' v'''$ and geodesic words $c, d$ so that
\begin{itemize}
\item
$|c|, |d| \leq 2\delta$,
\item
$v' c =_G a u'$,
\item
$v'' d =_G c u''$, and 
\item
$v''' b =_G d u'''$.
\end{itemize}
\end{corollary}

\begin{proof}
We prove this with two applications of Lemma~\ref{lem-quad}.  
The first application gives us $c$.  
In the second application we restrict our attention to the quadrilateral with sides labelled $c$, $b$, $u'' \cdot u'''$, and the fourth side labelled by the resulting suffix of $v$.  
This gives $d$. 
\end{proof}

Suppose that $S$ is a path in $\Gamma$ of length $n$, and $i$ is an integer. 
We adopt the convention that the use of the expression $S(i)$ implies that $i$ lies in $[0, n]$.   
Recall that $\bar{S}$ denotes $S$ with its parametrisation reversed.

\begin{lemma}  
\label{lem-thin-triangle}
Let $T$ be a $\delta$--thin geodesic triangle with vertices at $p$, $q$, and $r$ and with sides $P$, $Q$, and $R$.   
Suppose that $P(0) = q = \bar{R}(0)$, $Q(0) = r = \bar{P}(0)$, and $R(0) = p = \bar{Q}(0)$.  
Let $j$ be any integer so that $d_\Gamma(R(j), \bar{Q}(j)) > \delta$.
Then there are integers $i_R < i_Q$ so that $d_\Gamma(R(j), P(i_R)) \leq \delta$ and $d_\Gamma(\bar{Q}(j), P(i_Q)) \leq \delta$.
\end{lemma}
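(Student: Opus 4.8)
The plan is to argue directly from the definitions of meeting point and corresponding point, the key input being $\delta$--thinness. Write $\alpha = d_\Gamma(p,c_Q) = d_\Gamma(p,c_R)$, $\gamma = d_\Gamma(q,c_R) = d_\Gamma(q,c_P)$, and $\rho = d_\Gamma(r,c_P) = d_\Gamma(r,c_Q)$ for the distances from each vertex to the two meeting points ``visible'' from it. Since the sides are geodesic, $\alpha + \gamma = |R|$, $\alpha + \rho = |Q|$, and $\gamma + \rho = |P|$, where $|P|, |Q|, |R|$ denote the lengths $d_\Gamma(q,r)$, $d_\Gamma(r,p)$, $d_\Gamma(p,q)$ of the three sides; in particular $\alpha = \tfrac12(|R| + |Q| - |P|)$, and the triangle inequality gives $\alpha, \gamma, \rho \geq 0$.

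First I would observe that, given the orientation conventions $R(0) = p$, $Q(0) = r$ (so $\bar Q$ runs from $p$ to $r$), the points $R(j)$ and $\bar Q(j)$ both lie at distance $j$ from $p$ --- the former on the side $R$, the latter on the side $Q$, these being the two sides incident to $p$. Hence, if $j \leq \alpha$, then $R(j)$ and $\bar Q(j)$ are corresponding points with respect to $p$, and $\delta$--thinness would force $d_\Gamma(R(j), \bar Q(j)) \leq \delta$, contrary to hypothesis. Therefore $j > \alpha$. This single observation is the whole geometric content of the lemma; the rest is bookkeeping with parametrizations.

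From $j > \alpha$ we get $|R| - j < \gamma$ and $|Q| - j < \rho$. The first inequality places $R(j)$ on the sub-arc of $R$ between $q$ and the meeting point $c_R$; since $P$ is parametrized from $q = P(0)$, the point of $P$ at distance $|R| - j$ from $q$ is $P(|R| - j)$, which is the point of $P$ corresponding to $R(j)$ with respect to $q$, so $d_\Gamma(R(j), P(|R| - j)) \leq \delta$. I set $i_R = |R| - j$. Symmetrically, $|Q| - j < \rho$ places $\bar Q(j) = Q(|Q| - j)$ on the sub-arc of $Q$ between $r$ and $c_Q$; as $P$ runs from $q$ to $r$, the point of $P$ at distance $|Q| - j$ from $r$ is $P(|P| - |Q| + j)$, which corresponds to $\bar Q(j)$ with respect to $r$, giving $d_\Gamma(\bar Q(j), P(|P| - |Q| + j)) \leq \delta$. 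I set $i_Q = |P| - |Q| + j$. Along the way I would check that both indices are integers (all side lengths and $j$ are) and lie in $[0, |P|]$, using the triangle inequality together with the standing convention $0 \leq j \leq \min(|R|, |Q|)$ implicit in the notation $R(j)$, $\bar Q(j)$.

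It remains to see $i_R < i_Q$, that is, $|R| - j < |P| - |Q| + j$, which rearranges to $2j > |R| + |Q| - |P| = 2\alpha$ --- exactly the strict inequality established in the second step. So the proof closes. I expect the only place requiring care --- the ``main obstacle'', such as it is --- to be keeping the three orientation conventions straight, so that ``distance $j$ from $p$'' is read off as $R(j)$ on one side but as $\bar Q(j)$ on the other, and so that the corresponding points on $P$ receive the correct indices; there is no genuine geometric difficulty beyond the observation that $R(j)$ and $\bar Q(j)$ fail to correspond at $p$.
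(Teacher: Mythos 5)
Your proof is correct and takes essentially the same route as the paper's: you rule out correspondence of $R(j)$ and $\bar{Q}(j)$ at the vertex $p$ (forcing $j$ strictly past the meeting points) and then use $\delta$--thinness at $q$ and at $r$ to pair the two points with points of $P$, obtaining $i_R < i_Q$. The only difference is cosmetic --- you write the indices $i_R = |R|-j$ and $i_Q = |P|-|Q|+j$ explicitly and check $i_R < i_Q$ arithmetically, where the paper argues that $P(i_R)$ lies strictly before, and $P(i_Q)$ strictly after, the meeting point $c_P$.
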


In the statement and the proof we follow the notation of Figure~\ref{fig-triangle}.

\begin{proof}[Proof of Lemma~\ref{lem-thin-triangle}]
Since $d_\Gamma( R(j), \bar{Q}(j) ) > \delta$, the group elements $R(j)$ and $\bar{Q}(j)$ do not correspond to each other.  
Thus $R(j)$ is strictly after the meeting point $c_R$ along $R$.  
Similarly $\bar{Q}(j)$ is strictly after the meeting point $c_Q$ along $\bar{Q}$.  
Since $T$ is $\delta$--thin, there are integers $i_R$ and $i_Q$ so that 
\begin{itemize}
\item
$R(j)$ corresponds to $P(i_R)$ and so $d_\Gamma(R(j), P(i_R)) \leq \delta$ and 
\item
$\bar{Q}(j)$ corresponds to $P(i_Q)$ and so $d_\Gamma(\bar{Q}(j), P(i_Q)) \leq \delta$. 
\end{itemize}
We deduce that $P(i_R)$ is strictly before, and $P(i_Q)$ is strictly after, $c_P$ along $P$.  
Thus $i_R < i_Q$ and we are done. 
\end{proof}

\section{Compressed words and the compressed word problem} 
\label{sec-SLP}

\subsection{Straight-line programs}
\label{sec-SLP-def}

Straight-line programs offer succinct representations of long words that contain many repeated substrings.  
We here review the basics, referring to~\cite{Loh14} for a more in-depth introduction. 

\begin{definition}
Fix $\Sigma$, a finite alphabet.  A \emph{straight-line program} over $\Sigma$ is a triple $\calG = (V, S, \rho)$ where
\begin{itemize}
\item
$V$ is a finite set of \emph{variables}, disjoint from $\Sigma$, 
\item
$S \in V$ is the \emph{start variable}, and 
\item
$\rho : V \to (V \cup \Sigma)^*$ is an \emph{acyclic production mapping}: that is, the relation
\[
\{ (B,A) \in V \times V \st \mbox{$B$ appears in $\rho(A)$} \}
\]
is acyclic. We call $\rho(A)$ the {\em right-hand side} of $A$.
\end{itemize}
\end{definition}

\begin{example}
\label{exam-double}
Let $\Sigma = \{a, b\}$ and fix $n \geq 0$.  
We define $\calG_n = (\{A_0, \ldots, A_n\}, A_n, \rho)$,
where $\rho(A_0) = ab$ and $\rho(A_{i+1}) = A_i A_i$ for $0 \leq i \leq n - 1$.
\end{example}

\begin{definition}
Given a straight-line program $\calG$ as above, we define an \emph{evaluation} function $\eval = \eval_\calG \from (V \cup \Sigma)^* \to \Sigma^*$ as follows.
\begin{itemize}
\item
$\eval(a) = a$ for $a \in \Sigma$,
\item
$\eval(uv) = \eval(u)\eval(v)$ for $uv \in (V \cup \Sigma)^*$, and 
\item
$\eval(A) = \eval(\rho(A))$ for $A \in V$.
\end{itemize}
One proves by a delicate induction that $\eval$ is well defined.  
We finally take $\eval(\calG) = \eval(S)$.  
We call $\eval(\calG)$ the \emph{output} of the program $\calG$.
\end{definition}

In other words, $\calG$ is a context-free grammar that generates exactly one word $\eval(\calG)$ of $\Sigma^*$.  

So, continuing Example~\ref{exam-double} above we have $\eval(A_0) = ab$ and more generally $\eval(A_i) = (ab)^{2^i}$.  
Thus $\eval(\calG_n) = \eval(A_n) = (ab)^{2^n}$.  
So the output has length $2^{n+1}$. 

We say a straight-line program $\calG = (V, S, \rho)$ over $\Sigma$ is \emph{trivial} if $S$ is the only variable and $\rho(S) = \emptyword = \eval(\calG)$.  

We say that a straight-line program is in \emph{Chomsky normal form} if it is either a trivial program or all right-hand sides are of the form $a \in \Sigma$ or $BC$ with $B, C \in V$.
There is a linear-time algorithm that transforms a given straight-line program $\calG$ into a program $\calG'$ in Chomsky normal form with the same output;
see~\cite[Proposition~3.8]{Loh14}.

\begin{definition}
We define the \emph{size} $|\calG|$ of $\calG = (V, S, \rho)$ to be the sum of the bit-lengths of the right-hand sides of $\rho$. Symbols from $V \cup \Sigma$
are encoded by bit strings of length $O(\log (|V|+|\Sigma|))$ using a prefix code.
\end{definition}

Again considering Example~\ref{exam-double}, we see that the size of $\calG_n$ is $O(n \log(n))$.
(Note that we take into account the cost of writing out the indices of the variables $A_i$.)
Thus we see that straight-line programs can achieve (essentially) exponential compression.
The following result proves that straight-line programs can do no better; 
the proof follows the proof of~\cite[Lemma~1]{CLLLPPSS05}.\footnote{In \cite{CLLLPPSS05},
$|\calG|$ is defined as the sum of all lengths of right-hand sides of $\calG$. Note that this value is less
than or equal to our value of $|\calG|$ (the bit-lengths of the right-hand sides).}

\begin{lemma}
\label{lem-SLP-upper-bound}
For every straight-line program $\calG$ we have $|\eval(\calG)| \leq  3^{|\calG|/3}$. \qed
\end{lemma}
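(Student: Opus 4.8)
The plan is to first bound $|\eval(\calG)|$ by a \emph{product} of the lengths of the right-hand sides of $\rho$, and then convert that product into the stated exponential bound by means of the elementary inequality $x^{1/x} \le 3^{1/3}$ for $x > 0$. Throughout, ``length'' of a word $w \in (V \cup \Sigma)^*$ means its number of symbols $|w|$, and I use the fact that the bit-length of such a word is always at least $|w|$ (each symbol costs at least one bit), so that $|\calG| \ge \sum_{A \in V} |\rho(A)|$.

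\textbf{Setup and the product bound.} Write $\calG = (V, S, \rho)$ and, using that $\rho$ is acyclic, list $V = \{A_1, \ldots, A_n\}$ in a topological order: every variable occurring in $\rho(A_i)$ is a letter of $\Sigma$ or some $A_j$ with $j < i$, and $A_n = S$. Set $\ell_i = |\rho(A_i)|$. I first dispose of degenerate cases: if $\rho(A_i) = \emptyword$ for some $i$ with $A_i \ne S$, delete $A_i$ and erase it from all right-hand sides; this changes neither $\eval(\calG)$ nor the relevant inequality and cannot increase $|\calG|$. If $\rho(S) = \emptyword$ then $\calG$ is trivial and the bound is immediate. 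So assume $\ell_i \ge 1$ for all $i$. Now I prove by induction on $i$ that
\[
|\eval(A_i)| \le \prod_{j=1}^{i} \ell_j .
\]
For $i = 1$, $\rho(A_1)$ consists only of letters, so $|\eval(A_1)| = \ell_1$. For the inductive step, each of the $\ell_i$ symbols of $\rho(A_i)$ contributes $1$ to $|\eval(A_i)|$ if it is a letter, and $|\eval(A_j)| \le \prod_{k \le j}\ell_k \le \prod_{k \le i-1}\ell_k$ if it is a variable $A_j$ with $j < i$; summing over the $\ell_i$ symbols gives $|\eval(A_i)| \le \ell_i \prod_{k=1}^{i-1}\ell_k$. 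Taking $i = n$ yields $|\eval(\calG)| = |\eval(A_n)| \le \prod_{i=1}^{n}\ell_i$.

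\textbf{From the product to the exponential bound.} It remains to combine $\prod_i \ell_i$ with $|\calG| \ge \sum_i \ell_i$. For every integer $\ell \ge 1$ we have $\ell^{3} \le 3^{\ell}$; equivalently $\tfrac{\ln \ell}{\ell} \le \tfrac{\ln 3}{3}$, which holds because $x \mapsto \tfrac{\ln x}{x}$ attains its maximum over $x > 0$ at $x = e$ and its values at the neighbouring integers $x = 2$ and $x = 3$ are both at most $\tfrac{\ln 3}{3}$. Hence $\ell_i \le 3^{\ell_i/3}$ for each $i$, and therefore
\[
|\eval(\calG)| \le \prod_{i=1}^{n}\ell_i \le \prod_{i=1}^{n} 3^{\ell_i/3} = 3^{(\sum_i \ell_i)/3} \le 3^{|\calG|/3},
\]
as claimed. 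The only points requiring care are the treatment of empty right-hand sides and the verification of the scalar inequality $x^{1/x} \le 3^{1/3}$; neither is substantial, and the combinatorial heart of the argument is the one-line induction giving the product bound.
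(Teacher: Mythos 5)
Your proof is correct and takes essentially the same route as the paper, which establishes this lemma only by pointing to \cite[Lemma~1]{CLLLPPSS05}: bound $|\eval(\calG)|$ by the product of the right-hand-side lengths, then use that a product of positive integers with sum at most $m$ is at most $3^{m/3}$. The only cosmetic slip is the assertion that $S$ can always be placed last in the topological order (false if $S$ occurs in the right-hand side of an unreachable variable), but this is harmless since your induction bounds $|\eval(S)|$ by the partial product up to the index of $S$, which is at most the full product because every $\ell_i \ge 1$.
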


As a convenient short-hand, we will refer to straight-line programs over $\Sigma$ as \emph{compressed words}.

\subsection{Algorithms for compressed words}
\label{subsec-algo-compressed}

We will assume that all integers given as input to algorithms are given in binary.
We will need to know that the following algorithmic tasks can be solved in polynomial time; see~\cite[Proposition~3.9]{Loh14}.

Given a straight-line program $\calG$ and natural numbers $i \leq j$: 
\begin{itemize}
\item find the length $|\eval(\calG)|$;
\item find the letter $\eval(\calG)[i]$;
\item find a straight-line program $\calG'$ with $\eval(\calG') = \eval(\calG)[i:j]$.
\end{itemize}

The following proposition is also well-known~\cite[Lemma~2]{CLLLPPSS05}.

\begin{proposition}
\label{prop-power}
There is a polynomial-time algorithm that, given a straight-line program $\calG$ and a natural number $n > 0$, computes a straight-line program $\calG_n$ with $\eval(\calG_n) = \eval(\calG)^n$.  In fact, the time required is linear in $|\calG| + \log n$. \qed
\end{proposition}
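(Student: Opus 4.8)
The plan is to build the straight-line program $\calG_n$ for $\eval(\calG)^n$ by re-using the classical "repeated squaring" trick for exponentiation, but expressed at the level of grammars. First I would reduce to the case where $\calG$ is in Chomsky normal form and nontrivial (if $\eval(\calG) = \emptyword$ then $\eval(\calG)^n = \emptyword$ and we output the trivial program). The key observation is that we only ever need to \emph{concatenate} previously-constructed words, and concatenation of two compressed words is free: given straight-line programs producing $w_1$ and $w_2$ we produce $w_1 w_2$ by adding one fresh variable whose right-hand side is the pair of old start variables. So the whole construction is a matter of deciding which powers $\eval(\calG)^{2^j}$ to form and how to combine them.

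Concretely, write $n$ in binary as $n = \sum_{j=0}^{m} b_j 2^j$ with $m = \lfloor \log_2 n \rfloor$ and $b_j \in \{0,1\}$. First I would introduce variables $P_0, P_1, \ldots, P_m$ with $\rho(P_0) = S_\calG$ (the start variable of $\calG$) and $\rho(P_{j+1}) = P_j P_j$, so that $\eval(P_j) = \eval(\calG)^{2^j}$; this is exactly the doubling program of Example~\ref{exam-double} sitting on top of $\calG$. Then, letting $j_1 < j_2 < \cdots < j_k$ enumerate the indices with $b_{j_\ell} = 1$, I would introduce variables $Q_1, \ldots, Q_k$ with $\rho(Q_1) = P_{j_1}$ and $\rho(Q_{\ell+1}) = Q_\ell P_{j_{\ell+1}}$, and take $Q_k$ as the start variable of $\calG_n$. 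An easy induction on $\ell$ shows $\eval(Q_\ell) = \eval(\calG)^{\sum_{i \le \ell} 2^{j_i}}$, hence $\eval(Q_k) = \eval(\calG)^n$. The resulting program has all of $\calG$'s variables plus $O(m) = O(\log n)$ new variables, each with a right-hand side of length two (over variable names whose indices have $O(\log m)$ bits), so $|\calG_n| = |\calG| + O(\log n \cdot \log\log n)$; a more careful bookkeeping, or simply reading the bits of $n$ directly, gives the claimed linear bound $O(|\calG| + \log n)$. Computing the binary expansion of $n$ and writing out the new rules is clearly doable in time linear in $|\calG| + \log n$.

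I do not expect any real obstacle here: the argument is the standard square-and-multiply exponentiation algorithm, lifted verbatim to grammars using the trivial fact that grammar concatenation costs one production rule. The only point requiring a (routine) check is that the acyclicity condition is preserved — it is, since the new variables $P_j$ and $Q_\ell$ only reference variables with strictly smaller index (or the old start variable $S_\calG$), and $\calG$ itself was acyclic — and that the size and time bounds come out linear rather than merely quasi-linear, which is a matter of encoding the indices efficiently or, as noted, handling the bits of $n$ one at a time without precomputing all of $P_0, \ldots, P_m$ when some bits are zero. Since the proposition is quoted from \cite{CLLLPPSS05}, I would in the paper simply cite that reference, but the construction above is what underlies it.
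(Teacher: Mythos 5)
Your construction is correct and is exactly the standard square-and-multiply argument underlying the cited result of \cite{CLLLPPSS05}; the paper itself offers no independent proof, merely the citation, so there is nothing further to compare. Your remarks on acyclicity and on encoding the new variable names so that the size stays linear in $|\calG| + \log n$ address the only points that need care.
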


The following results are less trivial.
A proof of this proposition can be found in~\cite[Theorem 3.11]{Loh14}.

\begin{proposition}
\label{prop-fsa}
There is a polynomial-time algorithm that, given 
\begin{itemize}
\item
a finite alphabet $\Sigma$, 
\item
a finite state automaton $M$ over $\Sigma$, and 
\item
a straight-line program $\calG$ over $\Sigma$,
\end{itemize}
decides if $\eval(\calG)$ lies in the language $L(M)$. \qed
\end{proposition}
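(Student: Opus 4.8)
The plan is to exploit the standard fact that a finite state automaton induces a homomorphism from the free monoid $\Sigma^*$ into the finite monoid of transformations of its state set. Write $M = (Q, \Sigma, q_0, \delta, F)$. For a word $w \in \Sigma^*$ define $f_w \from Q \to Q$ by $f_w(q) = \delta(q, w)$. Then $f_{\emptyword}$ is the identity, and for all words $u, v$ we have $f_{uv} = f_v \circ f_u$, since $\delta(q, uv) = \delta(\delta(q,u), v)$. Consequently $f_w$ can be built from the single-letter maps $f_a$ (which are read directly off $\delta$) using composition, and $w \in L(M)$ holds if and only if $f_w(q_0) \in F$. The algorithm will compute the transformation $f_{\eval(\calG)}$ without ever writing down the (possibly exponentially long, by Lemma~\ref{lem-SLP-upper-bound}) word $\eval(\calG)$.

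First I would apply the linear-time transformation to Chomsky normal form discussed above, replacing $\calG$ by an equivalent program $\calG' = (V, S, \rho)$ whose size is polynomially bounded and with $\eval(\calG') = \eval(\calG)$; this does not change the answer. Now every right-hand side $\rho(A)$ is either a letter $a \in \Sigma$ or a pair $BC$ with $B, C \in V$. Since the production relation is acyclic, fix a topological ordering of $V$ and process the variables in that order. For each $A \in V$ I compute and store the transformation $f_{\eval(A)} \from Q \to Q$ as an explicit table of $|Q|$ entries: if $\rho(A) = a$ this table is just $q \mapsto \delta(q,a)$; if $\rho(A) = BC$ then $B$ and $C$ have already been processed, and $f_{\eval(A)} = f_{\eval(C)} \circ f_{\eval(B)}$ is obtained in $O(|Q|)$ steps by table look-up. (For a general, not necessarily Chomsky-normal, program one simply composes along the whole right-hand side, and the total work remains bounded by the sum of the lengths of the right-hand sides times $|Q|$.) There are at most $|\calG'|$ variables, so the entire computation runs in time polynomial in $|\calG| + |Q| + |\Sigma|$, and finally I return ``yes'' precisely when $f_{\eval(S)}(q_0) \in F$, which is correct by the characterization of $L(M)$ noted above.

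I do not expect a genuine obstacle here; the proof is a routine bottom-up evaluation of the straight-line program inside the finite transformation monoid of $M$. The only points needing a word of care are that the topological ordering exists — immediate from acyclicity of the production relation — and that representing each $f_{\eval(A)}$ by a table of size $|Q|$, rather than by the word $\eval(A)$ itself, is what keeps the running time polynomial. Both are straightforward bookkeeping, and together they yield the claimed polynomial-time algorithm.
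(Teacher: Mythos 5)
Your proof is correct, and it is the standard argument behind the result the paper cites (\cite[Theorem~3.11]{Loh14}): evaluate the program bottom-up in the finite transformation monoid of $M$, storing for each variable the induced map $Q \to Q$ rather than the word itself. Since the paper gives no proof of its own beyond this citation, your proposal matches the intended approach.
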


We also need the following variant of Proposition~\ref{prop-fsa}.

\begin{proposition}
\label{prop-fsa-2}
There is a polynomial-time algorithm that, given 
\begin{itemize}
\item
a finite alphabet $\Sigma$, 
\item
a finite state automaton $M$ over $\Sigma$, and 
\item
a straight-line program $\calG$ over $\Sigma$,
\end{itemize}
decides if $\{ \eval(\calG)^n \st n \in \NN \}$ is a subset of $L(M)$. 
\end{proposition}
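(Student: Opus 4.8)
The plan is to reduce this to Proposition~\ref{prop-fsa} by constructing a new finite state automaton whose language encodes the desired universal statement, and then feeding that automaton the same straight-line program $\calG$. Observe that the set $\{w^n \st n \in \NN\}$ is contained in $L(M)$ if and only if, for every $n \geq 1$, the automaton $M$ accepts $w^n$. (The case $n = 0$ asks whether $\emptyword \in L(M)$, i.e.\ whether $q_0 \in F$; this can be checked directly in constant time, so I will assume $q_0 \in F$ in what follows, else output ``no''.) The key combinatorial point is that whether $M$ accepts $w^n$ depends only on the map $\pi_w \from Q \to Q$ induced by reading $w$, namely $\pi_w(q) = \delta(q, w)$, together with how this map iterates. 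Specifically, reading $w^n$ from $q_0$ lands in state $\pi_w^n(q_0)$, so $w^n \in L(M)$ for all $n \geq 1$ if and only if $\pi_w^n(q_0) \in F$ for all $n \geq 1$.

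First I would build an automaton $M'$ that reads a word $w \in \Sigma^*$ and, in effect, tracks the function $\pi_w$. The state set of $M'$ is the set of all functions $Q \to Q$ (of which there are $|Q|^{|Q|}$, a constant depending only on $M$); the initial state is the identity function; on input letter $a$, the state $f$ transitions to the function $q \mapsto \delta(f(q), a)$. After reading $w$, the automaton $M'$ is in state $\pi_w$. It then remains to specify the accept states of $M'$: I want $M'$ to accept $w$ precisely when $\pi_w^n(q_0) \in F$ for all $n \geq 1$. This is a property of the single function $f = \pi_w$, so I declare $f$ to be accepting in $M'$ exactly when $f^n(q_0) \in F$ for every $n \geq 1$. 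This last condition is decidable: the sequence $q_0, f(q_0), f^2(q_0), \ldots$ is eventually periodic with pre-period and period bounded by $|Q|$, so it suffices to check that the first $|Q|$ terms (after the zeroth, or including it if one wishes to fold in the $n=0$ case) all lie in $F$. Hence the accept set of $M'$ can be computed in time depending only on $M$, i.e.\ in constant time with respect to the input $\calG$.

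With $M'$ in hand, the claim follows immediately: $\{\eval(\calG)^n \st n \in \NN\} \subseteq L(M)$ if and only if $q_0 \in F$ (handling $n=0$) and $\eval(\calG) \in L(M')$ (handling all $n \geq 1$). By Proposition~\ref{prop-fsa}, applied to the alphabet $\Sigma$, the automaton $M'$, and the straight-line program $\calG$, membership of $\eval(\calG)$ in $L(M')$ can be decided in polynomial time; since $M'$ has size bounded in terms of $M$ alone, and $M'$ is constructed in constant time, the overall procedure runs in polynomial time.

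I do not expect a serious obstacle here; the only point requiring care is the verification that ``$f^n(q_0) \in F$ for all $n \geq 1$'' is a well-defined, efficiently checkable predicate on functions $f$, which follows from the eventual periodicity of the orbit of $q_0$ under $f$ and the pigeonhole bound $|Q|$ on its transient plus period length. A secondary point worth stating explicitly is that the construction of $M'$ depends only on the fixed automaton $M$, not on $\calG$, so all the exponential-in-$|Q|$ blowup is absorbed into a constant, leaving the dependence on $\calG$ polynomial as required.
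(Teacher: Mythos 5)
There is a genuine gap, and it concerns the complexity claim rather than the combinatorics. In the proposition, the automaton $M$ is part of the input, so ``polynomial time'' means polynomial in the combined size of $\Sigma$, $M$, and $\calG$. Your auxiliary automaton $M'$ has $|Q|^{|Q|}$ states, so constructing it, and then running the algorithm of Proposition~\ref{prop-fsa} on the pair $(M', \calG)$, takes time exponential in $|M|$; the blow-up cannot be ``absorbed into a constant'' because $M$ is not fixed. What your argument actually proves is the weaker, non-uniform statement: for each \emph{fixed} automaton $M$ there is an algorithm polynomial in $|\calG|$. (That weaker statement would in fact suffice for the way the paper later uses this proposition, where $M$ is the fixed shortlex automaton of a fixed hyperbolic group, but it is not the proposition as stated.)

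Your underlying observation is exactly the right one, and it is the same as the paper's: whether $w^n \in L(M)$ depends only on the iterates of the map $\pi_w$ on $q_0$, and this orbit is eventually periodic with transient plus period at most $|Q|$, so $w^n \in L(M)$ for all $n \in \NN$ if and only if $w^n \in L(M)$ for all $0 \leq n \leq |Q|$. The paper exploits this \emph{without} building the function automaton: it computes, via Proposition~\ref{prop-power}, straight-line programs for $w^n$ for each $0 \leq n \leq |Q|$ (only $|Q|+1$ of them, each of size polynomial in $|\calG| + \log |Q|$) and tests each with Proposition~\ref{prop-fsa}, giving a bound polynomial in $|M| + |\calG|$. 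An equally good repair of your approach is to compute the single map $\pi_w$ directly by a bottom-up pass over $\calG$ in Chomsky normal form, composing the transition maps of the two children at each variable; each map is a table of size $|Q|$, so this takes time $O(|\calG|\,|Q|)$, after which one checks the orbit of $q_0$ under $\pi_w$ for $|Q|$ steps. Either route closes the gap; the construction of the full transformation-monoid automaton $M'$ does not.
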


\begin{proof}
Let $M = (Q, \Sigma, q_0, \delta, F)$ be the automaton.  
Suppose that $w = \eval(\calG)$.
All non-negative powers of $w$ belong to $L(M)$ if and only if $\delta(q_0, w^n)$ lies in $F$, for all $n \geq 0$. 

Since $Q$ is finite, there are natural numbers $k$ and $l$, with $0 \leq k < l \leq |Q|$ such that $\delta(q_0, w^k) = \delta(q_0, w^l)$ and hence
\[
\delta(q_0, w^{k+i}) = \delta(q_0, w^{l+i}) \mbox{ for all } i \ge 0.
\]
It follows that $w^n \in L(M)$ for all $n \geq 0$ if and only if $w^n \in L(M)$ for all $0 \leq n \leq |Q|$.
By Proposition~\ref{prop-power}, we can compute, in polynomial time and for all $0 \leq n \leq |Q|$, a straight-line program $\calG_n$ with output $\eval(\calG_n) = w^n$. 
Finally we use Proposition~\ref{prop-fsa} to test, in polynomial time, if $\eval(\calG_n) \in L(M)$ for these programs. 
\end{proof}

The following result is central to our past and present work. 
It was independently discovered by Hirshfeld, Jerrum, and Moller~\cite[Proposition~12]{HirshfeldJM94} 
(see also \cite[Proposition~3.2]{HiJeMo96}), 
by Mehlhorn, Sundar, and Uhrig~\cite{MehlhornSU94,MehlhornSU97} 
(where the result is implicitly stated in terms of dynamic string data structures), and
by Plandowski~\cite[Theorem~13]{Pla94}. 

\begin{theorem}  
\label{thm-plandowski}
There is a polynomial-time algorithm that, given straight-line programs $\calG$ and $\calH$, decides if $\eval(\calG) = \eval(\calH)$. \qed
\end{theorem}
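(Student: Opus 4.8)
The task is to decide $\eval(\calG) = \eval(\calH)$ in $\Sigma^*$ --- this is the compressed word problem for the free monoid. The plan is to use the \emph{recompression} method: keep the two output words in compressed form and repeatedly apply local compression operations to \emph{both} of them simultaneously until they shrink to single symbols, at which point equality is decided by inspection. By Lemma~\ref{lem-SLP-upper-bound} the outputs have length at most $N := 3^{(|\calG|+|\calH|)/3}$, so $\log N = O(|\calG|+|\calH|)$; the whole strategy is to reach length one in $O(\log N)$ rounds while keeping every intermediate object polynomial in $|\calG|+|\calH|$. To begin, I would put both programs into Chomsky normal form (possible in linear time), compute the two output lengths and compare them --- if they differ, report ``not equal'' --- and fix a common working alphabet that will accumulate fresh letters: each fresh letter is named either by a pair $(a,k)$ with $k \le N$ in binary, or by a pair $(a,b)$ of existing letters, so letter names stay of polynomial bit-length.

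A \emph{round} consists of two operations, each applied to both words. \textbf{Block compression:} for every letter $a$ currently present, replace every maximal run $a^k$ with $k \ge 2$ by a single fresh letter $(a,k)$; afterwards no word contains the factor $aa$. \textbf{Pair compression:} choose a partition of the current alphabet into ``left'' letters $L$ and ``right'' letters $R$, and for each $a\in L$, $b\in R$ replace every occurrence of the factor $ab$ by a fresh letter $(a,b)$. Analysed probabilistically, a uniformly random partition compresses a constant fraction of the positions left untouched by block compression, so over each round the total length drops by a constant factor; hence $O(\log N)$ rounds bring both words down to length one. Two facts keep the iteration honest: if $\eval(\calG) = \eval(\calH)$ before a round then, after the \emph{same} operations are applied to both, this remains true; and if at the start of some round the words disagree in length, or in their first or last letter, we may immediately report inequality --- so the algorithm only ever needs a trivial equality check, at the very end.

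The substance of the proof --- and the main obstacle --- is performing each compression step \emph{on the SLP} in polynomial time without blow-up. A compression is local on the explicit word, but a production $A \to BC$ may have a run, or a pair $ab$, straddling the boundary between $\eval(B)$ and $\eval(C)$, so one cannot simply rewrite right-hand sides. The remedy is to first ``pop'' the offending symbols out of nonterminals into their parents: before block compression, strip from each nonterminal its maximal prefix-run and suffix-run (recording a global prefix/suffix for the start symbol); before pair compression, with respect to the chosen partition, pop a leading ``right'' letter and a trailing ``left'' letter wherever a boundary would otherwise split a compressible pair. Each nonterminal then pops only $O(1)$ symbols per side per round; the delicate part is organizing these pops, and the resulting re-splicing, so that a careful amortized analysis keeps the whole program polynomial in $|\calG|+|\calH|$ throughout. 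Every primitive step used inside a round --- locating the run length at a given position, extracting a prefix or a suffix, forming a power $w^k$ --- is one of the polynomial-time SLP operations recalled in Section~\ref{subsec-algo-compressed} and Proposition~\ref{prop-power}.

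Two further points need care to turn this into a genuine deterministic algorithm. First, the partition in pair compression must be chosen without randomness: a standard averaging (then greedy) argument produces in polynomial time a partition compressing at least a constant fraction of the non-block positions, since there are only polynomially many ``crossing'' pair types to weigh. Second, one must verify that the constant-factor length decrease per round survives both the derandomized partition and the popping steps; this is largely bookkeeping, but it is where the ``$O(\log N)$ rounds'' bound is actually earned. An alternative to the whole approach is Plandowski's original route: reduce equality testing to compressed pattern matching and show that the occurrences of one compressed word inside another form $O((|\calG|+|\calH|)^2)$ arithmetic progressions, using the Fine--Wilf periodicity lemma to control overlapping occurrences; there the combinatorics on words is the main obstacle rather than the SLP-size accounting.
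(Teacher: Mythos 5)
Your outline is essentially correct, but it proves the theorem by a genuinely different route from the one the paper relies on: the paper simply cites Plandowski's original argument, which reduces equality testing to a compressed pattern-matching-type computation and uses combinatorics on words (periodicity, Fine--Wilf) to show that the occurrences of one variable's evaluation inside another's form polynomially many arithmetic progressions that can be maintained compactly -- exactly the alternative you mention in your closing sentences. What you sketch instead is Je\.z's recompression method: iterated block and pair compression applied in lockstep to both words, with letters ``popped'' across nonterminal boundaries so that each local compression can be performed on the grammars, and with a derandomized choice of the left/right partition so that each round shrinks the words by a constant factor, giving $O(\log N)$ rounds with $\log N = O(|\calG|+|\calH|)$ by Lemma~\ref{lem-SLP-upper-bound}. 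Both routes are valid; recompression is more modular and generalizes readily (to fully compressed pattern matching as in Theorem~\ref{thm-pattern-matching}, and beyond to word equations), while Plandowski's argument is self-contained combinatorics on words with no size-amortization bookkeeping. Be aware, though, that in your write-up the two hardest points -- the amortized bound showing the grammars stay of polynomial size under repeated popping and re-splicing, and the deterministic (conditional-expectation/greedy over crossing pair types) choice of partition that still compresses a constant fraction of positions -- are asserted rather than carried out; they are standard in the recompression literature, but they are precisely where the work lies, so a full proof would have to supply them.
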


We now give a version of \cite[Theorem~1]{KarpinskiEtAl95}; this generalises Theorem~\ref{thm-plandowski} to the so-called \emph{fully compressed pattern matching problem}.  
See~\cite[Theorem~1.1]{Jez15} for a quadratic time algorithm, which is the best currently known. 

\begin{theorem}
\label{thm-pattern-matching}
There is a polynomial-time algorithm that, given straight-line programs $\calG$ and $\calH$, decides if $\eval(\calG)$ is a factor of $\eval(\calH)$.  
Furthermore, if it is a factor, the algorithm returns (in binary) the smallest $m \in \NN$ so that $\eval(\calG)$ is a prefix of $\eval(\calH)[n:]$. \qed
\end{theorem}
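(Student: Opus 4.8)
The plan is to follow Karpinski, Rytter and Shinohara~\cite{KarpinskiEtAl95}: compute a compact description of the set of all occurrences of the pattern $P := \eval(\calG)$ inside the text $T := \eval(\calH)$ by a bottom--up pass through the variables of $\calH$. First I would convert both programs to Chomsky normal form and dispose of the degenerate cases (if $\eval(\calG) = \emptyword$ it is a prefix at position $0$; if $|\eval(\calG)| > |\eval(\calH)|$ it is not a factor; if $|\eval(\calG)| \le 1$ the answer is read off by the length and letter queries of Section~\ref{subsec-algo-compressed}). Write $p = |P|$, and for a variable $A$ with $\rho(A) = BC$ let $\ell_A = |\eval(B)|$. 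Call an occurrence of $P$ in $\eval(A)$ \emph{crossing} if its start lies in the interval $(\ell_A - p,\ \ell_A)$, i.e.\ it straddles the cut of $A$. The combinatorial heart of everything is the following periodicity fact: if $P$ occurs at positions $i < j$ of a common word with $j - i \le p$, then $j - i$ is a period of $P$; combined with the Fine--Wilf theorem this forces the set of starting positions of the crossing occurrences of $\eval(A)$ -- all of which lie in one window of length less than $2p$ -- to be a single arithmetic progression. Such a progression is described by its first term, step and cardinality, which have polynomial bit--length by Lemma~\ref{lem-SLP-upper-bound}.

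The outer layer of the recursion computes, for every variable $A$ of $\calH$, this progression $\mathrm{CrOcc}(A)$ of crossing offsets. Granting these, the full occurrence set is recovered by the decomposition, for $A$ with $\rho(A) = BC$,
\[
\mathrm{Occ}(P,\eval(A)) \;=\; \mathrm{Occ}(P,\eval(B)) \;\cup\; \bigl(\ell_A + \mathrm{Occ}(P,\eval(C))\bigr) \;\cup\; \mathrm{CrOcc}(A)
\]
(intersected with the legal range $[0, |\eval(A)| - p]$): a minimal derivation subtree containing a given occurrence is either a leaf -- impossible once $p \ge 2$ -- or a variable whose cut is straddled. Hence $P$ occurs in $T$ at all iff some $\mathrm{CrOcc}(A)$ is nonempty. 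For the first occurrence I would also compute, processing variables in topological order from the start variable, the least position $\mathrm{firstpos}(A)$ at which $\eval(A)$ appears as a factor of $T$ (set $\mathrm{firstpos}(\text{start}) = 0$ and propagate through $\rho(A) = BC$), and then return $\min_A\bigl(\mathrm{firstpos}(A) + \min \mathrm{CrOcc}(A)\bigr)$, which is the desired smallest $m$. Everything here is clearly polynomial once the $\mathrm{CrOcc}(A)$ are available.

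Computing $\mathrm{CrOcc}(A)$ is the inner layer and the main obstacle. Reparametrising a crossing occurrence by the length $s \in \{1,\dots,p-1\}$ of the portion of $P$ lying left of the cut, we have $s \in \mathrm{CrOcc}(A)$ (in this parametrisation) iff $P[:s]$ is a suffix of $\eval(B)$ and $P[s:]$ is a prefix of $\eval(C)$. So it suffices to compute, for each variable $B$ of $\calH$, the set $\mathcal{S}(B) = \{\, s : P[:s]$ is a suffix of $\eval(B)\,\}$, the dual set $\mathcal{P}(C) = \{\, s : P[s:]$ is a prefix of $\eval(C)\,\}$, and then intersect. Using $\rho(B) = B_1 B_2$, a length-$s$ prefix of $P$ that is a suffix of $\eval(B)$ either lies inside $\eval(B_2)$ -- reducing to $\mathcal{S}(B_2)$ -- or swallows all of $\eval(B_2)$, and in the latter case $\eval(B_2)$ is necessarily shorter than $P$ and must occur at a prescribed position of $P$, a condition tested with the SLP substring operations of Section~\ref{subsec-algo-compressed} together with Plandowski's equality test (Theorem~\ref{thm-plandowski}). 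The periodicity lemma again guarantees that the ``long'' parts of $\mathcal{S}(B)$ and $\mathcal{P}(C)$ are arithmetic progressions, so each of the polynomially many such sets has a polynomial--size description and the intersections reduce to arithmetic on arithmetic progressions. The genuinely delicate points -- controlling the ``short borders'' of $P$, and organising the recursion so that repeatedly matching a short compressed factor against $P$ does not blow up the bookkeeping -- are precisely the technical content of~\cite{KarpinskiEtAl95}, and for those I would invoke that paper rather than reprove them.
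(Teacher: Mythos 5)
The paper gives no argument for this statement at all—it imports it verbatim from Karpinski, Rytter and Shinohara \cite{KarpinskiEtAl95}—and your sketch is exactly an outline of that cited algorithm (arithmetic-progression descriptions of the crossing occurrences at each cut, obtained from Fine--Wilf periodicity, plus bookkeeping to extract the leftmost occurrence), with the genuinely delicate steps deferred to the same reference; so you are taking essentially the same route as the paper. The only quibble is your phrasing of the periodicity fact: a single arithmetic progression is guaranteed for the occurrences that all contain the fixed cut position (so lie in a window of length less than $p$), not for an arbitrary window of length less than $2p$ (for instance $aba$ occurs in $abaababa$ at positions $0,3,5$), but this does not affect the crossing-occurrence sets you actually use.
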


We obtain the following corollary of Theorem~\ref{thm-pattern-matching} and Lemma~\ref{lem-rotation}.

\begin{corollary}
\label{coro-cyclic-conj}
There is a polynomial-time algorithm that, given straight-line programs $\calG$ and $\calH$, decides if $\eval(\calG)$ is a rotation of $\eval(\calH)$.  
Furthermore, if it is, then the algorithm returns straight-line programs $\calH'$ and $\calH''$ such that
\[
\pushQED{\qed}
\eval(\calH) = \eval(\calH') \eval(\calH'') 
    \qquad \mbox{and} \qquad
\eval(\calG) = \eval(\calH'') \eval(\calH')  \qedhere
\popQED
\]
\end{corollary}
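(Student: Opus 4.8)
The plan is to reduce the rotation question to fully compressed pattern matching via Lemma~\ref{lem-rotation}, which says that $\eval(\calG)$ is a rotation of $\eval(\calH)$ precisely when $|\eval(\calG)| = |\eval(\calH)|$ and $\eval(\calG)$ is a factor of $\eval(\calH) \cdot \eval(\calH)$. First I would compute $|\eval(\calG)|$ and $|\eval(\calH)|$ in polynomial time using the length algorithm from Section~\ref{subsec-algo-compressed}; if these differ, we answer ``no'' and stop. Otherwise, using Proposition~\ref{prop-power} with $n = 2$, I would build in polynomial time a straight-line program $\calH_2$ with $\eval(\calH_2) = \eval(\calH) \cdot \eval(\calH)$. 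Then I would invoke Theorem~\ref{thm-pattern-matching} on the pair $\calG$ and $\calH_2$ to decide whether $\eval(\calG)$ is a factor of $\eval(\calH_2)$; combined with the already-checked length equality, this decides the rotation question.

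For the ``furthermore'' part, suppose the algorithm reports that $\eval(\calG)$ is a rotation of $\eval(\calH)$. Write $N = |\eval(\calH)|$. Theorem~\ref{thm-pattern-matching} additionally returns, in binary, the smallest $m \in \NN$ such that $\eval(\calG)$ is a prefix of $\eval(\calH_2)[m:]$; since $|\eval(\calG)| = N$ and $\eval(\calH_2)$ has length $2N$, we have $0 \le m \le N$. Now $\eval(\calG) = \eval(\calH_2)[m : m+N]$. Set $\calH' $ to be a straight-line program with $\eval(\calH') = \eval(\calH)[{:}m]$ and $\calH''$ one with $\eval(\calH'') = \eval(\calH)[m{:}]$; both are obtainable in polynomial time by the subword-extraction primitive from Section~\ref{subsec-algo-compressed}. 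Then $\eval(\calH) = \eval(\calH')\eval(\calH'')$, and because $\eval(\calH_2)[m:m+N]$ is the length-$N$ factor of $\eval(\calH)\eval(\calH)$ starting at position $m$, it equals $\eval(\calH)[m{:}] \cdot \eval(\calH)[{:}m] = \eval(\calH'')\eval(\calH')$; hence $\eval(\calG) = \eval(\calH'')\eval(\calH')$, as required.

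This argument is essentially bookkeeping on top of the cited black boxes, so I do not expect a genuine obstacle. The one point that needs a small amount of care is the index arithmetic: making sure that the position $m$ returned by the pattern-matching routine (which references the doubled word $\eval(\calH_2)$) is reinterpreted correctly as a cut position in the single word $\eval(\calH)$, and that the edge cases $m = 0$ and $m = N$ (where one of $\calH'$, $\calH''$ is the trivial program) are handled. Everything else — the length check, the doubling via Proposition~\ref{prop-power}, the factor test via Theorem~\ref{thm-pattern-matching}, and the two subword extractions — is a polynomial-time step by the results already in the excerpt, so the composite algorithm runs in polynomial time.
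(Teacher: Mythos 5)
Your proposal is correct and is essentially the argument the paper intends (the paper leaves it implicit): combine the rotation characterization of Lemma~\ref{lem-rotation} with a length check, doubling via Proposition~\ref{prop-power}, the occurrence position returned by Theorem~\ref{thm-pattern-matching}, and the subword-extraction primitive to produce $\calH'$ and $\calH''$. Your index bookkeeping, including the observation that the least occurrence position $m$ satisfies $0 \leq m \leq N$ and the degenerate cases $m = 0$ and $m = N$, is handled correctly.
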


\subsection{The compressed word problem}

Suppose that $G$ is a group and $\Sigma$ is a finite symmetric generating set.  The \emph{compressed word problem} for $G$, over $\Sigma$, is the following decision problem.

\begin{description}
\item [Input] A straight-line program $\calG$ over $\Sigma$.
\item [Question] Does $\eval(\calG)$ represent the identity of $G$?
\end{description}

Note that the compressed word problem for a group $G$ is decidable if and only if the word problem for $G$ is decidable.
As discussed in the introduction, there are in fact groups $G$ where the compressed word problem is strictly harder than the word problem itself. 

Observe that the computational complexity of the compressed word problem for $G$ does not depend on the chosen generating set $\Sigma$. 
That is, if $\Sigma'$ is another such, then the compressed word problem for $G$ over $\Sigma$ is logspace reducible to the compressed word problem for $G$ over $\Sigma'$~\cite[Lemma~4.2]{Loh14}.
Thus, when proving that the compressed word problem is polynomial time, we are allowed to use whatever generating set is most convenient for our purposes.

\begin{remark}
\label{rem-inverse}
As a simple but useful tool, note that if $\calG$ is a straight-line program over $\Sigma$ with output $w$ then there is a straight-line program $\bar{\calG}$ with output $w^{-1}$. 
\end{remark}

\subsection{Cut programs}
\label{sec-cut}

A useful generalisation of straight-line programs are the \emph{composition systems} of~\cite[Definition~8.1.2]{Hag00}.
These are also called \emph{cut straight-line programs} in \cite{Loh14}.  We shall simply call them \emph{cut programs}.
They are used, for example, in the polynomial-time algorithm for the compressed word problem of a free group~\cite{Loh06siam}.  

A \emph{cut program} over $\Sigma$ is a tuple $\calG = (V, S, \rho)$, with $V$ and $S$ as in Section~\ref{sec-SLP-def}, and where we also allow, as right-hand sides for $\rho$, expressions of the form $B[i:j]$, with $B \in V$ and with $i \leq j$.  
We again require $\rho$ to be acyclic.
When $\rho(A) = B[i:j]$ we define 
\[
\eval(A) = \eval(B)[i:j]
\]
with the cut operator $[i:j]$ as defined in Section~\ref{sec-notation}.  
Note that this is only well-defined if $0 \leq i \leq j \leq |\eval(B)|$.
This condition will be always ensured in the rest of the paper.
The \emph{size} of a cut program $\calG$ is the sum of the bit-lengths of the right-hand sides; as usual all natural numbers are written in binary. 

We can now state a straightforward but important result of Hagenah; see~\cite[Algorithmus~8.1.4]{Hag00} as well as~\cite[Theorem~3.14]{Loh14}.

\begin{theorem}
\label{thm-hagenah}
There is a polynomial-time algorithm that, given a cut program $\calG$, finds a straight-line program $\calG'$ such that 
$\eval(\calG) = \eval(\calG')$.
\end{theorem}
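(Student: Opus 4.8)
The plan is to eliminate the cut operations from $\calG$ one variable at a time. First I would bring all non-cut right-hand sides into Chomsky normal form, so that every non-cut rule reads $\rho(A) = a$ with $a \in \Sigma$ or $\rho(A) = BC$ with $B, C \in V$; cut rules $\rho(A) = B[i:j]$ are left untouched for now. I would then handle the variables in an order in which a variable is processed only after every variable occurring (directly or indirectly) in its definition. A variable carrying a non-cut rule needs no work. When we reach a variable $A$ with $\rho(A) = B[i:j]$, the sub-program rooted at $B$ is by then an ordinary straight-line program, and the task is to replace $\rho(A)$ by a cut-free expression with the same value $\eval(B)[i:j]$, built on top of and reusing that sub-program.

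Since $B[i:j] = (B[:j])[i:]$ it suffices to extract prefixes and suffixes, and suffixes are the mirror image of prefixes, so I focus on producing an ordinary program for $\eval(B)[:j]$. After precomputing $|\eval(X)|$ for every variable $X$ --- possible in polynomial time, the lengths having $O(|\calG|)$ bits by Lemma~\ref{lem-SLP-upper-bound} --- one descends from $B$: at a variable $X$ with $\rho(X) = CD$, if $j \le |\eval(C)|$ recurse into $C$ keeping the same index; if $|\eval(C)| < j < |\eval(X)|$, keep $C$ whole, recurse into $D$ with index $j - |\eval(C)|$, and introduce a fresh variable whose right-hand side concatenates $C$ with the result returned for $D$; the cases $j = 0$, $j = |\eval(X)|$, and $\rho(X) = a$ are immediate. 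Because each step recurses into a single child, the descent traces one path from $B$ to a leaf, of length at most the height of the current program, so a single prefix (or suffix) extraction creates at most that many new variables, each with a binary right-hand side.

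The step I expect to be the main obstacle is bounding the total size, since there are up to $|\calG|$ cut eliminations and a naive analysis --- in which each elimination enlarges the program by an amount proportional to its current size --- would give an exponential bound. The way around this is the observation that the prefix construction does not increase the height of the program. Along the descent path $B = X_0 \to X_1 \to \cdots \to X_d$ the heights strictly decrease, $\height(X_\ell) \le \height(B) - \ell$, hence the ``kept whole'' children spliced into the newly created right comb have strictly decreasing height bounds, and a short telescoping argument then shows that the comb, and so the whole new program, still has height at most $\height(B)$. Consequently each cut elimination keeps the height bounded by the program's original height $h_0$ and therefore introduces only $O(h_0)$ new variables; after all eliminations the program has $O(|\calG|^2)$ variables and polynomial size, while every individual step (length computations, tracing the path, creating variables) is polynomial, so the whole procedure runs in polynomial time. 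When the start variable has been processed its definition is cut-free, and the resulting straight-line program is the desired $\calG'$.
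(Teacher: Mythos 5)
Your proposal is correct, and it is essentially the standard argument behind this statement: the paper does not prove Theorem~\ref{thm-hagenah} itself but quotes it from Hagenah~\cite{Hag00} (see also \cite[Theorem~3.14]{Loh14}), whose proof is exactly this bottom-up cut elimination via prefix/suffix extraction along a root-to-leaf path, with the key observation — which you identify and justify correctly — that the extraction does not increase the height, so each of the at most $|\calG|$ eliminations adds only $O(h_0)$ binary rules and the total size stays polynomial. This matches the ``cut elimination'' idea the paper alludes to in the remark at the end of Section~\ref{sec-cut}.
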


Theorems~\ref{thm-plandowski} and~\ref{thm-hagenah} imply that there is a polynomial-time algorithm that, given two cut programs, decides if they have the same output. 

\begin{remark} \label{remark-cut}
In fact, in what follows, we will only ever need the prefix and suffix cut operators $[:j]$ and $[i:]$.  
This is because, when using a word to represent a group element, cancellation appears where two factors meet. 

We also note that iterating the cut operator can be done using arithmetic alone.  That is, the cut variables
\[
B[i:j][k:\ell] \quad \mbox{and} \quad B[i+k:i+\ell]
\]
have the same evaluation.
This ``cut elimination'' is, in some sense, the heart of the proof of Theorem~\ref{thm-hagenah}.
\end{remark}

\section{The compressed word problem for hyperbolic groups}

Suppose that $G$ is a group and $\Sigma$ is a finite symmetric generating set.  
We fix a total order $<$ on $\Sigma$.
Suppose that $G$ is $\delta$--hyperbolic; 
here we take $\delta$ large enough so that all geodesic triangles are $\delta$--thin, and we assume also that $\delta > 0$ is an integer.  
(This assumption is used in Case~\ref{Case:Trim} inside of the proof of Lemma~\ref{lem-TSLP-SLP}.)
In what follows, we take $\zeta = 2 \delta$.
Recall that $\Ball(r)$ is the ball of radius $r$ about $1_G$ in the Cayley graph $\Gamma = \Gamma(G, \Sigma)$. 

\subsection{Tethered programs}
\label{subsec-tethered}

We introduce a new type of program using the \emph{tether} operator.

A \emph{tethered program} over $\Sigma$ is a tuple $\calG = (V, S, \rho)$, with $V$ and $S$ as in Section~\ref{sec-SLP-def}, and where we also allow, as right-hand sides for $\rho$, expressions of the form $B\teth{a, b}$, with $B \in V$ and with $a, b \in \Ball(\zeta)$.
We again require $\rho$ to be acyclic.
If $\rho(A) = B\teth{a, b}$ then we define 
\[
\eval(A) = \slex(a \cdot \eval(B) \cdot b^{-1})
\]
We call the suffix $\teth{a, b}$ a \emph{tether} operator.  
The \emph{size} of a tethered program $\calG$ is the sum of the bit-lengths of the right-hand sides; group elements in $\Ball(\zeta)$ are represented by their shortlex representatives. 

Finally, in a \emph{tether-cut} program $\calG$ over $\Sigma$ we allow right-hand sides which are words from $(V \cup \Sigma)^*$, a cut variable, or a tethered variable.  It is sometimes convenient to allow more complicated right-hand sides of the form 
$\alpha_1 \cdot \alpha_2 \cdots \alpha_k$ where every $\alpha_i$ is either a symbol from $\Sigma$ or a variable $B$ to which 
a sequence of cut- and tether-operators is applied to.
An example of such a right-hand side is
\[
A[:i] \teth{a, b} \cdot a \cdot B[j:] \teth{c, d}  
\]
Note that a right-hand side of the form $(A \cdot B)[i:j]$ or $(A \cdot B)\teth{a,b}$ are not allowed.

Finally, we define the size of a tether-cut program $\calG$ as the sum of the bit-lengths of the right-hand sides.  In what follows we will assume that all programs arising are over a fixed alphabet $\Sigma$. 

\begin{remark}
\label{rem-double-tether}
In what follows we mostly need the prefix and suffix tether operators $\teth{a, 1}$ and $\teth{1, b}$.  
For, suppose that $\Gamma(G, \Sigma)$ is hyperbolic and that $u$ and $v$ are geodesic words.
Let $w =_G uv$ be a geodesic word representing their product.
Then we can describe $w$ (up to bounded Hausdorff distance) by taking a prefix of $u$, tethering the result at the end, concatenating with a short word, and then tethering (at the front) a suffix of $v$.  
See Figure~\ref{fig-cutting-off-peak} below.

We also note that iterating tether operators can be done ``locally''.  
That is, for any $a, a', b, b' \in \Ball(\zeta)$ there are elements $a'', b'' \in \Ball(\zeta)$, elements $x, y \in \Ball(2\zeta)$, and natural numbers $i, j$ so that the expressions
\[
B \teth{a, b} \teth{a',b'} 
\quad \mbox{and} \quad 
x \cdot B[i:j] \teth{a'', b''} \cdot y
\]
have the same evaluation: that is, they represent the same shortlex reduced word. 
See Figure~\ref{fig-iterated-tether}.
Again, this ``tether-elimination'' is, in some sense, the heart of our proof of Lemma~\ref{lem-TSLP-SLP}.
\end{remark}

\begin{figure}
  \centering{
    \scalebox{1}{
      \begin{tikzpicture}
        \tikzstyle{small} = [circle, draw = black, fill = black, inner sep = .15mm]
        \tikzstyle{zero} = [circle, inner sep = 0mm]

        \node[small] (1) {} ;
        \node[small, right = 9cm of 1] (2) {};

        \node[zero,  above = 1.5cm of 1] (1u) {};        
        \node[zero,  above = 1.5cm of 2] (2u) {};        
        \node[zero,  below = 1.5cm of 1] (1d) {};        
        \node[zero,  below = 1.5cm of 2] (2d) {};        

        \node[small, right = 1.5cm of 1u] (3) {};
        \node[small, left  = 1.5cm of 2u] (4) {};
        \node[small, right = 1.5cm of 1d] (5) {};
        \node[small, left  = 1.5cm of 2d] (6) {};
        \node[zero, right  = 1.15cm of 3] (3r) {};
        \node[small, below  = 0.5cm of 3r] (3') {};
        \node[zero, left  = 1.15cm of 4] (4l) {};
        \node[small, below  = 0.5cm of 4l] (4') {};
        \node[zero, right  = 2.05cm of 5] (5r) {};
        \node[small, above  = 0.75cm of 5r] (5') {};
        \node[zero, left  = 2.05cm of 6] (6l) {};
        \node[small, above  = 0.75cm of 6l] (6') {};

        \draw [->] (3) to [out=-30, in=160] node[pos = 0.5, above right = -0.7mm]{$x$} (3');  
	\draw [->] (3') to [out=-20, in=-160] node[pos = 0.5, above = -0.4mm]{$w''$} (4');
	\draw [->] (4') to [out=20, in=-150]
                                             node[pos = 0.5, above left = -0.7mm]{$y$}  (4);
        \draw [->] (1) edge node[below = -0.7mm]{} (2); 
        \draw [->] (5) to [out=30, in=-170] node[pos = 0.5, below right = -0.7mm]{$u'$} (5');  
	\draw [->] (5') to [out=10, in=170] node[pos = 0.5, below = -0.4mm]{$u''$} (6');
	\draw [->] (6') to [out=-10, in=150]
                                             node[pos = 0.6, below left = -0.7mm]{$u'''$}  (6);
        \draw [->] (1) to [out=-30, in=120] node[below left = -0.7mm]{$a$} (5);
        \draw [->] (2) to [out=-150, in=60] node[below right =-0.7mm]{$b$} (6);

        \draw [->] (3) to [out=-120, in=30] node[above left = -0.7mm]{$a'$} (1);
        \draw [->] (4) to [out=-60, in=150] node[above right =-0.7mm]{$b'$} (2);

        \draw [->] (3') to [out=-50, in =105] node[pos = 0.25, below left = -1.0mm]{$a''$} (5');
        \draw [->] (4') to [out=-130, in=75] node[pos = 0.25, below right = -1.0mm]{$b''$} (6');

  \end{tikzpicture}}}
  \caption{The evaluation of $B\teth{a, b}\teth{a', b'}$ agrees with the evaluation of $x \cdot B[i:j]\teth{a'', b''} \cdot y$.  Here we are assuming that $\eval(B) = u = u'u''u'''$ and that $\eval(B[i:j]\teth{a'', b''}) = w''$.}
  \label{fig-iterated-tether}
\end{figure}
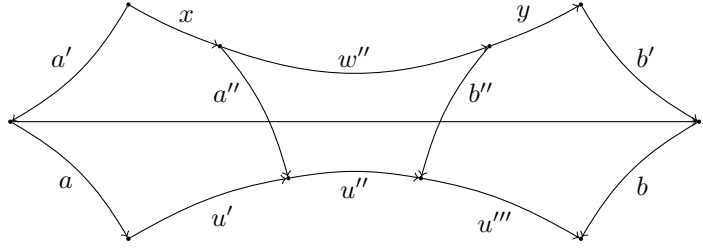

We say that a program is in \emph{Chomsky normal form} if it is either a trivial program or
all right hand sides $\rho(A)$ have one of the following forms, where $B,C \in V$, $a \in \Sigma$, $i \leq j$ and $b, c \in \Ball(\zeta)$:
$a$, $BC$, $B[i:j]$, $B\teth{b,c}$.
Similar to the case of straight-line programs, there is a linear-time algorithm that transforms a given program $\calG$ (with $\eval(\calG) \neq \emptyword$) into a program $\calG'$ in Chomsky normal form with the same output. 

We say that a program $\calG$ is \emph{geodesic} (or \emph{shortlex}) if for every variable $A$, the word $\eval(A)$ is geodesic (shortlex reduced).

\begin{lemma}
\label{lem-CNF}
There is a polynomial-time algorithm that, given a geodesic tether-cut program $\calG$, returns a geodesic tether-cut program $\calG'$ with the same evaluation which is in Chomsky normal form.
\end{lemma}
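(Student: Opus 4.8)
The plan is to follow the standard construction that converts a context-free grammar into Chomsky normal form, but carried out so that it respects the tether and cut operators and, crucially, preserves the geodesic property of every variable. First I would pass to an equivalent program in which every production either lies in $(V \cup \Sigma)^*$ or is of the form $B\teth{a,b}$ or $B[i:j]$ with a single variable on the right. This is easy: any right-hand side that mixes words with a tether/cut operator (for example the composite expression $B[:i]\teth{a,b} \cdot C[j:]\teth{c,d}$ displayed in Section~\ref{subsec-tethered}) is broken up by introducing fresh variables, one per operator-applied subexpression and one per maximal word factor, each new variable's evaluation being a factor of some existing $\eval(A)$ and therefore again geodesic by the Remark following the definition of shortlex reduction. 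Since a tethered/cut expression in a right-hand side has bounded ``local'' bit-length and the total number of such sub-expressions is at most $|\calG|$, this step is linear time and at most linearly increases the program size.

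Next I would handle the word-only productions exactly as in the classical Chomsky-normal-form algorithm: remove unit productions $A \to B$ by substitution, remove $\emptyword$-productions (here we may assume $\eval(\calG) \neq \emptyword$, and any variable with empty evaluation can simply be deleted and its occurrences erased), and then right-factorize any word $\rho(A) = X_1 X_2 \cdots X_k$ of length $k \geq 3$ into a cascade $A \to X_1 A_1$, $A_1 \to X_2 A_2$, $\ldots$ using fresh variables $A_1, \ldots, A_{k-2}$. Every fresh variable evaluates to a suffix of $\eval(A)$, hence is geodesic. Finally, for the productions with a terminal letter occurring alongside a variable, replace the letter $a$ by a fresh variable $A_a$ with $\rho(A_a) = a$; this is harmless since single letters are geodesic. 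The one subtlety is that unit-production elimination must be interleaved correctly with the presence of the new single-variable tether/cut productions $A \to B\teth{a,b}$ and $A \to B[i:j]$: these are \emph{not} unit productions in the grammar sense (the operator is a genuine transformation), so they are left untouched, and the unit-production elimination is applied only to genuine $A \to B$ with no operator. Throughout, size grows by at most a constant factor per step and only a constant number of steps are performed, so the whole procedure runs in polynomial (indeed linear) time, and the output program $\calG'$ is by construction in Chomsky normal form with $\eval(\calG') = \eval(\calG)$ and with every variable geodesic.

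The main obstacle I expect is purely bookkeeping rather than conceptual: one must verify that none of the grammar transformations (unit elimination, $\emptyword$-elimination, right-factorization) can accidentally create a variable whose evaluation fails to be geodesic. This is where the closure property ``a factor of a geodesic word is geodesic'' does all the work — every fresh variable introduced in any step evaluates to a factor (in fact a prefix or suffix) of the evaluation of a pre-existing variable, and pre-existing variables were geodesic by hypothesis on $\calG$. The tether and cut productions are simply carried along unchanged, so they cannot destroy geodesy either. Hence the geodesic hypothesis propagates automatically, and the argument reduces to checking that the classical algorithm, restricted to the word-only part of the program, behaves as usual.
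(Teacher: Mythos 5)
Your overall approach is the same as the paper's (the paper's proof is one line: run the usual Chomsky-normal-form algorithm, ``with care taken for tethered variables''), and most of your bookkeeping — splitting composite right-hand sides, right-factorizing long word productions, the observation that every fresh variable evaluates to a factor of an existing evaluation and is therefore geodesic, and leaving cut/tether productions structurally alone — is fine. But the one point where genuine care is needed is exactly the point you gloss over, and as stated your $\emptyword$-elimination step is wrong. You write that ``any variable with empty evaluation can simply be deleted and its occurrences erased.'' This is correct when the occurrence of such a variable $B$ is inside a right-hand side lying in $(V\cup\Sigma)^*$, but not when $B$ occurs in a tether production $\rho(A)=B\teth{a,b}$: there $\eval(A)=\slex(a\cdot\eval(B)\cdot b^{-1})=\slex(ab^{-1})$, which is in general a nonempty word of length up to $2\zeta$, so erasing $B$ (or deleting the production) changes the evaluation. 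The correct move, which is the entire content of the paper's proof beyond the standard algorithm, is to replace $\rho(A)$ by the explicit word $\slex(ab^{-1})$; and since this word may itself be empty (when $a=_G b$), the variable $A$ can in turn become an $\emptyword$-production occurring inside another tether, so the substitution must be iterated bottom-up until no such configuration remains.

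A secondary issue with your formulation: ``variable with empty evaluation'' is a semantic condition, and for a tethered variable it can hold without the production being syntactically empty (namely when $a\cdot\eval(B)\cdot b^{-1}=_G 1$ with $\eval(B)\neq\emptyword$). Deciding that condition amounts to a compressed word problem for the tether-cut program, which is not available at this point of the paper — Lemma~\ref{lem-CNF} is used as a preprocessing step in the proofs of Lemmas~\ref{lem-TSLP-SLP} and~\ref{lem-TCSLP-SLP}, so appealing to them here would be circular. Fortunately you do not need it: for Chomsky normal form it suffices to eliminate the \emph{syntactically} empty productions (for cut productions $B[i:j]$ this is the purely arithmetic case $i=j$), handling the tether case by the substitution $\rho(A):=\slex(ab^{-1})$ described above; variables whose evaluation happens to be empty but whose right-hand side has the allowed shape may simply be kept.
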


\begin{proof}
We essentially use the usual algorithm; see, for example~\cite[Proposition~3.8]{Loh14}.  
However, some care must be taken with tethered variables.

By introducing new variables, we can first assume that all right-hand sides of $\calG$ have the form 
$w \in (V \cup \Sigma)^*$, $B[i:j]$ or $B \teth{a, b}$ with $B \in V$. This preserves the property of being
a geodesic tether-cut program, since every factor of a geodesic word is again geodesic.

Next we eliminate variables $B$ with $\rho(B) = \emptyword$. For this we take any variable $B$ with $\rho(B) = \emptyword$,
remove $B$ from the tether-cut program, and replace every
occurrence of $B$ in a right-hand side $\rho(A)$ by the empty word. Note that if $\rho(A) = B[i:j]$ then we must have $i=j=0$
and we set $\rho(A) = \emptyword$. If $\rho(A) = B \teth{a, b}$, then we set $\rho(A) = \slex(a b^{-1}) \in \Sigma^*$ (which has length at most $2\zeta$). Iterating this step will finally eliminate all variables $B$ whose right-hand side
is the empty word. 

The rest of the proof follows the proof of \cite[Proposition~3.8]{Loh14}: by introducing new variables we can assume that
all right-hand sides have one of the following forms: $a \in \Gamma$, $B_1 B_2 \cdots B_n$, $B[i:j]$, $B\teth{b,c}$,
where $B, B_1, \ldots B_n$ are variables and $n \geq 1$. If $\rho(A) = B$ for variables $A, B$ we can remove $A$ and 
replace all occurrences of $A$ in a right-hand by $B$. Iterating this step ensures that whenever $\rho(A) = B_1 B_2 \cdots B_n$ 
for a variable $A$ then $n \geq 3$. Finally, by adding new variables we can split up right-hand sides $B_1 B_2 \cdots B_n$ 
with $n \geq 3$ in right-hand sides consisting of exactly two variables.

Note that the above construction preserves the property of being geodesic, since every factor of a geodesic word is again geodesic.
Also notice that the final program may still have variables $B$ with $\eval(B) = \emptyword$. This is due to the tether operator.
\end{proof}

Note that the concatenation of geodesic words may not itself be geodesic; however the concatenation does provide two sides of a geodesic triangle.  
When the group $G$ is hyperbolic, this gives us the beginnings of a reduction procedure. 

We now turn to the task of proving Proposition~\ref{prop-TCSLP-SLP}.
We will give a sequence of results that allows us to transform a geodesic tether-cut program into a straight-line program, whose evaluation is the shortlex representative of the original.  
The first step, in Lemma~\ref{lem-TSLP-SLP}, gives such a transformation for tethered programs.  
The second step, finishing the proof of Proposition~\ref{prop-TCSLP-SLP}, is to transform a geodesic tether-cut program into an geodesic tethered program with the same output.  
This second step is inspired by Hagenah's result (Theorem~\ref{thm-hagenah}) transforming a cut program into an equivalent straight-line program.

\subsection{Transforming tethered programs}

Suppose that $\calG = (V, S, \rho)$ is a program, as above.  

We recursively define the \emph{height} of elements of $\Sigma \cup V$.  
If $a \in \Sigma$ then we take $\height(a) = 0$.  
For $A \in V$ we define
\[ 
\height(A) = \max \{ \height(B) + 1 \st \mbox{$B \in \Sigma \cup V$ occurs in $\rho(A)$} \}
\]
Finally we set $\height(\calG) = \height(S)$. 

Suppose that $\calG$ is a tethered program in Chomsky normal form.  
If $A \in V$ is a variable we define its \emph{tether-height}, denoted $\height_t(A)$, recursively as follows. 
\begin{itemize}
\item If $\rho(A) = a$, then $\height_t(A) = 0$,
\item if $\rho(A) = BC$, then $\height_t(A)= \max \{ \height_t(B), \height_t(C)\}$, and
\item if $\rho(A) = B \teth{s, t}$ then $\height_t(A) = \height_t(B) + 1$.
\end{itemize}
For a variable $A$ we define its \emph{tether-depth} to be 
\[
\depth_t(A) = \height_t(S) - \height_t(A) + 1
\]

\begin{lemma} 
\label{lem-TSLP-SLP}
There is a polynomial-time algorithm that, given a geodesic tethered program $\calG$, finds a shortlex straight-line program $\calG'$ so that $\eval(\calG') = \slex(\eval(\calG))$. 
\end{lemma}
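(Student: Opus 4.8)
The plan is to process the tethered program $\calG$ bottom-up (equivalently, by induction on tether-height after first converting to Chomsky normal form via Lemma~\ref{lem-CNF}), maintaining the invariant that for each variable $A$ we have built a straight-line program whose evaluation equals $\slex(\eval(A))$, together with short ``correction words'' recording the bounded discrepancy between concatenations and their shortlex reductions. The base cases ($\rho(A)=a$) are trivial. For a concatenation rule $\rho(A)=BC$ we have already built shortlex programs for $\eval(B)$ and $\eval(C)$; since these are geodesic and $G$ is hyperbolic, the word $\eval(B)\eval(C)$ ``bows in'' along a geodesic triangle, so the shortlex reduction of the product is obtained by deleting a bounded-Hausdorff-distance peak near where the two factors meet. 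Concretely, using Lemma~\ref{lem-quad-two} (with $a=b=\emptyword$) one locates a prefix of $\eval(B)$ and a suffix of $\eval(C)$ that survive, joined by a word $c$ of length at most $2\delta$; one then applies the \emph{uncompressed} shortlex-reduction algorithm of Lemma~\ref{lem-shortlex-reduction} only to the bounded-length junction word, producing the shortlex program for $\eval(A)$ as (shortlex program for a prefix of $\eval(B)$) $\cdot$ (explicit short word) $\cdot$ (shortlex program for a suffix of $\eval(C)$); prefixes and suffixes of straight-line programs are computable in polynomial time.

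The substantive case is the tether rule $\rho(A)=B\teth{s,t}$, where we must compute $\slex(s\cdot\eval(B)\cdot t^{-1})$ from a shortlex program for $\eval(B)$ with $s,t\in\Ball(\zeta)$. Here the key point is again locality: since $s$ is short, left-multiplication by $s$ only disturbs a bounded-length prefix of the geodesic $\eval(B)$, and symmetrically $t^{-1}$ only disturbs a bounded-length suffix; the middle of $\eval(B)$ is unchanged by shortlex reduction. So one extracts from the shortlex program for $\eval(B)$ a bounded-length prefix and a bounded-length suffix (of length, say, some fixed constant depending only on $\delta$ and $|\Sigma|$ — large enough to absorb all cancellation, which exists by the quadrilateral lemmas), runs the uncompressed algorithm of Lemma~\ref{lem-shortlex-reduction} on $s\cdot(\text{prefix})$ and on $(\text{suffix})\cdot t^{-1}$, and reassembles. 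This is exactly the ``tether-elimination'' picture of Figure~\ref{fig-iterated-tether}: a tethered variable rewrites to a short word, times an interior cut of $B$, times another short word — and crucially the interior cut here is of a \emph{shortlex} word, so it is itself shortlex.

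The remaining worry is complexity blow-up: naively, each level of the recursion might prepend and append constant-length words and take prefixes/suffixes of the previously built program, and if these operations nest, the bit-sizes of the cut indices could grow. This is controlled because the final object we want is a \emph{straight-line} program, and the operations ``take a prefix,'' ``take a suffix,'' ``prepend an explicit word,'' ``append an explicit word'' can all be implemented as cut-program or tether-cut-program operations and then converted once at the end: thus one should really build a geodesic \emph{tether-cut} program (or just a cut program, since everything is already shortlex) of polynomial size in a single bottom-up pass, introducing $O(1)$ new variables per original variable, and then invoke Theorem~\ref{thm-hagenah} to convert to a genuine straight-line program in polynomial time. The main obstacle to get right is the bookkeeping in the concatenation and tether steps: verifying that a \emph{fixed} constant prefix/suffix length (depending only on $\delta$) always suffices to capture all the cancellation — this is where Lemma~\ref{lem-quad} and Lemma~\ref{lem-quad-two} are used, with the hypotheses $|u'|\ge|a|+2\delta$ etc.\ forcing the ``peak'' to be short — and that the interior piece one keeps is genuinely untouched by shortlex reduction, so that splicing a shortlex interior between two freshly shortlex-reduced short ends yields a globally shortlex word. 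Uniqueness of shortlex representatives (Remark~\ref{rem-why-both}) is what makes these local computations glue to the correct global answer.
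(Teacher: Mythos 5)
Your central locality claim is false, and it is exactly the difficulty that the paper's proof is built to overcome. You assert that in the tether step $\rho(A)=B\teth{s,t}$ the middle of $\eval(B)$ is \emph{literally} unchanged by shortlex reduction, so that $\slex(s\cdot\eval(B)\cdot t^{-1})$ can be obtained by splicing reduced short ends around an untouched interior factor (and, in the concatenation step, that literal prefixes/suffixes of the factors survive). In a hyperbolic group this holds only up to fellow-travelling, not letter-for-letter. Concretely, take the virtually free (hence hyperbolic) group $F_2\times(\ZZ/2\ZZ)$ with free generators $a,b$, central involution $z$, and an order in which $a<z$: then $\slex(a^n)=a^n$ while $\slex(z\cdot a^n)=a^nz$, so a single short left factor changes the shortlex representative at the \emph{far} end of the word. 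Your spliced word $\slex(za^K)\cdot a^{n-K}=a^Kza^{n-K}$ is not shortlex reduced for any constant $K$, and no bounded local surgery repairs it. The correct statement (Remark~\ref{rem-double-tether}, via Lemmas~\ref{lem-quad} and~\ref{lem-quad-two}) is that the interior of $\slex(aub^{-1})$ equals $\slex(c\,u'\,d^{-1})$ for some factor $u'$ of $u$ and some $c,d\in\Ball(\zeta)$, which is in general a different word from $u'$. This is precisely why the paper does not maintain a single program for $\slex(\eval(A))$: for each variable it keeps explicit buffer words $\ell_A,r_A$ together with the whole family of decorated variables $A'_{a,b}$, one for every pair $a,b\in\Ball(\zeta)$, producing $\slex(a\,w'\,b^{-1})$ for the interior $w'$; a tether (or concatenation) then only re-indexes within this family, glued by explicitly computed bounded words and certified shortlex via Lemma~\ref{lem-shortlex-regular} and Proposition~\ref{prop-fsa}, which is what makes the induction close.

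Two further gaps. First, constant-length buffers cannot work even for bookkeeping: each tether elimination degrades the usable prefix and suffix by an amount of order $\zeta$ (triangle inequality through the corner words $e,f$ in Case~\ref{case-tether-elim}), so the buffers must shrink with each nested tether and hence must start out with length proportional to the tether-depth; this is the point of the constraint $8\zeta\depth_t(A)\le|\ell_A|\le 8\zeta\depth_t(A)+2\zeta\height(A)$ and of the dummy tethers inserted to equalize tether-depths at concatenation nodes. Second, your proposed finish --- assemble a tether-cut (or cut) program and invoke Theorem~\ref{thm-hagenah} --- is not available: Hagenah's theorem eliminates cut operators only, whereas eliminating tether operators is the content of the present lemma and of Lemma~\ref{lem-TCSLP-SLP}, which the paper deduces \emph{from} this lemma; the cut-only variant of your plan rests on the literal-splicing claim refuted above, so the appeal is circular.
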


\begin{proof}
Set $\calG = (V, S, \rho)$.  The straight-line program $\calG'$ that we construct will be of the form $\calG' = (V',S',\rho')$ for suitable $V'$ and $\rho'$.

Applying Lemma~\ref{lem-CNF} we may assume that $\calG$ is in Chomsky normal form.  
Introducing a new start variable, if needed, we may assume that $\rho(S)$ has the form $A \teth{1, 1}$ for a variable $A$.
We do this to force the evaluation of $\calG$ to be shortlex reduced, not just geodesic. 
By removing unused variables, we can assume that $S$ has maximal height and maximal tether-height among all variables.  
This implies that, for all $A \in V$, the tether-depth of $A$ is greater than zero.
Finally, for every variable $A \in V$ such that $\rho(A) = BC$ with $B,C \in V$ we can assume that
\[
\depth_t(A) = \depth_t(B) = \depth_t(C)
\]
To ensure this property we add dummy variables to $\calG$, with productions of the form $X\teth{1, 1}$, as needed.

In the rest of the proof, $\height$, $\height_t$ and $\depth_t$ always refer to the original tethered program $\calG$.

We carry out the proof in a bottom-up fashion; that is, we consider the variables of $\calG$ in order of increasing height.  
Here is an outline of the proof; we give the details below. 
Set $w = \eval(A)$.
If $|w| \leq 16 \zeta \depth_t(A) + 2\zeta$ (such a word will be also called short) then we compute and record $w$, as a word.
If $w > 16 \zeta \depth_t(A) + 2\zeta$  (such a word will be also called long), then we instead compute words $\ell_A$ and $r_A$ such that 
\[
w = \ell_A \cdot w' \cdot r_A
\]
for some word $w'$ of length at least $2 \zeta$.  
The details of the computation depend on the production $\rho(A)$.
We require that the word $\ell_A$ satisfies the following length constraint
\begin{equation}
\label{equ-length}
8 \zeta \depth_t(A) \leq |\ell_A| \le 8 \zeta \depth_t(A) + 2\zeta \height(A)
\end{equation}
and similarly for $r_A$.  

When $w$ is long, we also add to the program $\calG'$ the \emph{decorated} variables $A'_{a,b}$ for all $a, b \in \Ball(\zeta)$.  
We arrange the following:
\[
\eval(A'_{a,b}) = \slex(a \cdot w'  \cdot  b^{-1})
\]
These new variables $A'_{a,b}$, and also a new start variable $S'$, are the \emph{only} variables appearing in $\calG'$, that is, they form the set $V'$.
All of the words that we compute and record along the way, such as the short words $w$ and the prefixes and suffixes $\ell_A$ and $r_A$, are not separately stored as part of $\calG'$.

That completes our outline of the proof.  
We now consider the possibilities for the right-hand side $\rho(A)$.  

\begin{case}
Suppose $\rho(A) \in \Sigma$. 
Thus $w = \eval(A)$ is geodesic and shorter than $16 \zeta \depth_t(A) + 2\zeta$.  We record it and continue. 
\end{case}

\begin{case}
Suppose $\rho(A) = BC$ for variables $B$ and $C$. Recall that we have
\[
\depth_t(A) = \depth_t(B) = \depth_t(C)
\]
Set $\eta = \depth_t(A)$.
Let $u = \eval(B)$, $v = \eval(C)$, and $w = \eval(A) = uv$.
Recall that $u, v, w$ are geodesic by assumption.

\begin{subcase}
\label{sub-both-long}
Suppose $|u| > 16 \zeta \eta + 2\zeta$ and $|v| > 16 \zeta \eta + 2\zeta$.
Hence, in previous stages of the algorithm we computed words $\ell_B, r_B, \ell_C, r_C$ such that the following properties hold.
\begin{itemize}
\item
The prefixes and suffixes $\ell_B, r_B, \ell_C, r_C$ satisfy the length constraint of Equation~\ref{equ-length}.
\item 
There are geodesic words $u', v'$ of length at least $2 \zeta$ with $u = \ell_B \cdot u' \cdot r_B$ and $v = \ell_C \cdot v' \cdot r_C$.
\end{itemize}
Also, we have already defined variables $B'_{a,c}$ and $C'_{d,b}$ for all $a, b, c, d \in \Ball(\zeta)$, which produce $\slex(a \cdot  u' \cdot  c^{-1})$ and $\slex(d \cdot  v' \cdot  b^{-1})$, respectively. 
See Figure~\ref{fig-both-long}.

We now set $\ell_A = \ell_B$ and $r_A = r_C$. 
Since the tether-depths of $A, B, C$ are all the same, but $A$ has greater height, we deduce that $\ell_A$ and $r_A$ satisfy the length constraint of Equation~\ref{equ-length}. 
We also note that 
\[
|u' \cdot r_B \cdot \ell_C \cdot v'| \geq 2\zeta
\]
because $|u'|  \geq 2\zeta$.

It remains to define the right-hand sides for the variables $A'_{a,b}$ for all $a, b \in \Ball(\zeta)$.  
Fix $a, b \in \Ball(\zeta)$. 
For all $c, d \in \Ball(\zeta)$ we compute 
\[
z = \slex( c \cdot r_B \cdot \ell_C \cdot d^{-1} )
\]
in polynomial time using Lemma~\ref{lem-shortlex-reduction}.  
We then check, using Proposition~\ref{prop-fsa} and Lemma~\ref{lem-shortlex-regular}, whether the word
\[
\eval(B'_{a,c}) \cdot z \cdot \eval(C'_{d,b}) 
   = \slex(a \cdot  u' \cdot  c^{-1}) \cdot \slex(c \cdot r_B \cdot \ell_C \cdot d^{-1}) \cdot \slex(d \cdot  v' \cdot  b^{-1}) 
\]
is shortlex reduced, in which case it is equal to 
\[
\slex(a \cdot  u' \cdot r_B \cdot \ell_C \cdot v' \cdot  b^{-1})
\]
Again, see Figure~\ref{fig-both-long}. Since $|u'| \geq 2 \zeta \geq |a|+\zeta = |a|+2\delta$, 
$|v'| \geq 2 \zeta \geq |b|+\zeta = |b|+2\delta$, and $|r_B \ell_C| \geq 16 \zeta \geq 4 \delta$,
Corollary~\ref{cor-quad-two} ensures that there must be at least one such pair $c, d$.  
(If there are several, we stop as soon as we find the first such.)
We then define
\[
\rho'(A'_{a,b}) = B'_{a,c} \cdot z \cdot C'_{d,b}
\]
\end{subcase}

\begin{figure}
  \tikzstyle{vertex}=[circle, fill, inner sep=.5pt, minimum size =.5mm ] 
  \centering{
    \scalebox{1}{
      \begin{tikzpicture}
        \node[vertex] (a) {} ;
        \node[vertex,  right = 1cm of a] (b) {};
        \node[vertex,  right = 4cm of b] (c) {};
        \node[vertex,  right = 1cm of c] (d) {};
        \node[vertex,  right = 1cm of d] (e) {};
        \node[vertex,  right = 4cm of e] (f) {};
        \node[vertex,  right = 1cm of f] (g) {};
    
        \node[vertex,  above = 1cm of b] (b') {};
        \node[vertex,  above = 1cm of c] (c') {};
        \node[vertex,  above = 1cm of e] (e') {};
        \node[vertex,  above = 1cm of f] (f') {};
            
        \draw [->](b') edge node[left=-.7mm]{$a$} (b);
        \draw [<-](c) edge node[left=-.7mm]{$c$} (c');
        \draw [->](e') edge node[right=-.7mm]{$d$} (e);
        \draw [<-](f) edge node[right=-.7mm]{$b$} (f');
     
        \draw [->](a) edge node[below=-.7mm]{$\ell_B$} (b);
        \draw [->, dashed](b) edge node[below=-.7mm]{$u'$} (c);
        \draw [->, dashed](b') edge node[above=-.7mm]{$\slex(a u' c^{-1})$} (c');
        \draw [->](c') edge node[above=-.7mm]{$z$} (e'); 
        \draw [->](c) edge node[below=-.7mm]{$r_B$} (d);
        \draw [->](d) edge node[below=-.7mm]{$\ell_C$} (e);
        \draw [->, dashed](e) edge node[below=-.7mm]{$v'$} (f);
        \draw [->, dashed](e') edge node[above=-.7mm]{$\slex(d v' b^{-1})$} (f');
        \draw [->](f) edge node[below=-.7mm]{$r_C$} (g);
  \end{tikzpicture}}}
  \caption{Case~\ref{sub-both-long} from the proof of Lemma~\ref{lem-TSLP-SLP}.  Dashed lines represent words that are given by straight-line programs. }
  \label{fig-both-long} 
\end{figure}
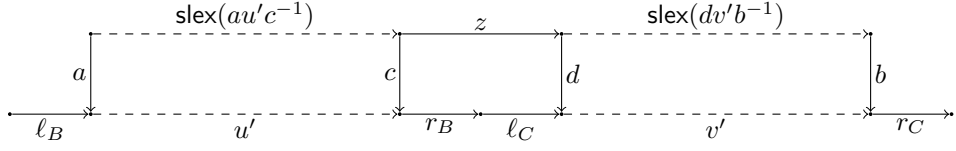

\begin{subcase}
\label{sub-one-long}
Suppose $|u| > 16 \zeta \eta + 2\zeta$ and $|v| \leq 16 \zeta \eta + 2\zeta$.
Thus, at previous stages of the algorithm we computed the geodesic word $v$ explicitly and also computed explicit words $\ell_B$ and $r_B$ such that the following properties hold. 
\begin{itemize}
\item
The prefix and suffix $\ell_B, r_B$ satisfy the length constraint of Equation~\ref{equ-length}.
\item 
There is a geodesic word $u'$ of length at least $2 \zeta$ with $u = \ell_B \cdot u' \cdot r_B$.
\end{itemize}
Also, we already defined variables $B'_{a,b}$ for all $a, b \in \Ball(\zeta)$
such that $B'_{a,b}$ produces $\slex(a \cdot  u' \cdot  b^{-1})$.

If $|v| \leq 2\zeta$, then we set $\ell_A = \ell_B$ and $r_A = r_B v$.  
In this case, we also define $\rho'(A'_{a,b}) = B'_{a,b}$ for all $a, b \in \Ball(\zeta)$.  
Since $\height(B) + 1 \leq \height(A)$, we have the following.
\begin{align*}
8 \zeta \eta  \leq |\ell_A| \leq 8 \zeta \eta + 2\zeta \height(B)       &\leq 8 \zeta \eta + 2\zeta \height(A) \\
8 \zeta \eta  \leq |r_A|    \leq 8 \zeta \eta + 2\zeta (\height(B) + 1) &\leq 8 \zeta \eta + 2\zeta \height(A)
\end{align*}
Thus the length bounds of Equation~\ref{equ-length} are satisfied.

Now assume that $|v| > 2\zeta$. 
Again, we set $\ell_A = \ell_B$.
Since $|r_B \cdot  v| \geq |r_B| \geq 8\zeta \eta$ we can define  $r_A$ as the suffix of $r_B \cdot  v$ of 
length $8\zeta \eta$; that is, $r_B \cdot v = y \cdot r_A$ for some word $y$ of length $|y| = |r_B|+|v|-|r_A| \ge |v| > 2\zeta$. 
This satisfies the required bounds on the lengths of $\ell_A$ and $r_A$.

It remains to define the right-hand sides for the variables $A'_{a,b}$ for all $a, b \in \Ball(\zeta)$.
Let us fix $a, b \in \Ball(\zeta)$. 
For all $c \in \Ball(\zeta)$ we compute $z = \slex(c \cdot  y \cdot  b^{-1})$ and check whether the word
\[
\eval(B'_{a,c}) \cdot z = \slex(a \cdot  u' \cdot  c^{-1}) \cdot \slex(c \cdot  y \cdot  b^{-1}) 
\]
is shortlex reduced. 
If it is, then it equals $\slex(a \cdot  u' \cdot  y \cdot  b^{-1})$; see Figure~\ref{fig-one-long}.
By Lemma~\ref{lem-quad}, there must be at least one such $c$, for which we define
\[
\rho'(A'_{a,b}) = B'_{a,c} \cdot z
\]
\end{subcase}

\begin{figure}
  \tikzstyle{vertex}=[circle, fill, inner sep=.5pt, minimum size =.5mm ] 
  \centering{
    \scalebox{1}{
      \begin{tikzpicture}
        \node[vertex] (a) {} ;
        \node[vertex,  right = 1cm of a] (b) {};
        \node[vertex,  right = 4cm of b] (c) {};
        \node[vertex,  right = 1cm of c] (d) {};
        \node[vertex,  right = 1.5cm of d] (e) {};  
        \node[vertex,  left = 1cm of e] (f) {};
        \node[vertex,  above = 1cm of b] (b') {};
        \node[vertex,  above = 1cm of c] (c') {};
        \node[vertex,  above = 1cm of f] (f') {};
        \draw [->](b') edge node[left=-.7mm]{$a$} (b);
        \draw [<-](c) edge node[left=-.7mm]{$c$} (c');
        \draw [<-](f) edge node[right=-.7mm]{$b$} (f');    
        \draw [->](a) edge node[below=-.7mm]{$\ell_B$} (b);
        \draw [->, dashed](b) edge node[below=-.7mm]{$u'$} (c);
        \draw [->, dashed](b') edge node[above=-.7mm]{$\slex(a u' c^{-1})$} (c');
        \draw [->](c') edge node[above=-.7mm]{$z$} (f'); 
        \draw [->](c) edge node[below=-.7mm]{$r_B$} (d);
        \draw [->](d) edge node[above=-.7mm]{} (e);  
      
        \node[vertex,  inner sep=0pt, minimum size =0mm, above = 1.0mm of c] (c'') {};
        \node[vertex,  inner sep=0pt, minimum size =0mm, above = 1.0mm of f] (f'') {};
        \draw [->](c'') edge node[above=-.7mm]{$y$} (f'');
        
        \node[vertex,  inner sep=0pt, minimum size =0mm, below = 1.0mm of d] (d'') {};
        \node[vertex,  inner sep=0pt, minimum size =0mm, below = 1.0mm of e] (e'') {};
        \draw [->](d'') edge node[below=-.7mm]{$v$} (e'');  
  \end{tikzpicture}}}
  \caption{Case~\ref{sub-one-long} from the proof of Lemma~\ref{lem-TSLP-SLP}.
    Again, dashed lines represent words that are given by straight-line programs.}
  \label{fig-one-long}
\end{figure}
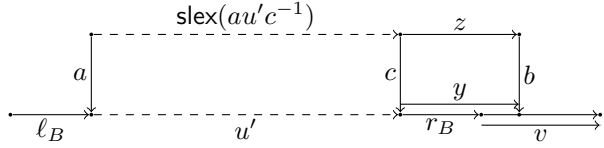

\begin{subcase}
Suppose $|u| \leq 16 \zeta \eta + 2\zeta$ and $|v| > 16 \zeta \eta + 2\zeta$. 
This is dealt with in similar fashion to the previous case. 
\end{subcase}

\begin{subcase}
\label{sub-both-short}
Suppose $|u| \leq 16 \zeta \eta + 2\zeta$ and $|v| \leq 16 \zeta \eta+ 2\zeta$.  
In this case, we have computed $u$ and $v$ explicitly at a previous stage.
We now distinguish between the cases $|w| \leq 16 \zeta \eta + 2\zeta$ and $|w| > 16 \zeta \eta+ 2\zeta$. 
In the first case, we record the word $w$ for later use. 
In the second, we factorise $w$ as $w = \ell_A \cdot w' \cdot r_A$ with $|\ell_A| = |r_A| = 8 \zeta \eta$, and thus $|w'| \ge 2\zeta$. 
We then explicitly compute, for each $a, b \in \Ball(\zeta)$, the word $\slex(a \cdot  w' \cdot  b^{-1})$ and set $\rho'(A'_{a,b})$ equal to it.
This again uses Lemma~\ref{lem-shortlex-reduction}.
\end{subcase}
\end{case}

\begin{case}
\label{case-tether-elim}
Suppose $\rho(A) = B \teth{a, b}$ for $a, b \in \Ball(\zeta)$.
Let $u = \eval(B)$ and $v = \eval(A) = \slex(a \cdot  u \cdot  b^{-1})$.
The word $u$ is geodesic by assumption, and $v$ is shortlex reduced by definition.
Let $\eta = \depth_t(B)$. 
We have $\depth_t(A) = \eta - 1 \geq 1$.

\begin{subcase}
Suppose $|u| \leq 16 \zeta \eta + 2\zeta$. 
Hence, at a previous stage, we explicitly computed the word $u$.
Using Lemma~\ref{lem-shortlex-reduction} we explicitly compute the word $v = \slex(a \cdot  u \cdot  b^{-1})$.  
The rest of the work divides into cases as $|v|$ is less than or equal to $16 \zeta \eta + 2\zeta$ or is greater.  
This is analogous to Case~\ref{sub-both-short} (where $w$ plays the role of $v$). 
\end{subcase}

\begin{subcase}
\label{Case:Trim}
Suppose $|u| > 16 \zeta \eta + 2\zeta$.  
At a previous stage we computed words $\ell_B, r_B$ with the following properties. 
\begin{itemize}
\item
The prefix and suffix $\ell_B, r_B$ satisfy the length constraint of Equation~\ref{equ-length}.
\item 
There is a geodesic word $u'$ of length at least $2 \zeta$ with $u = \ell_B \cdot u' \cdot r_B$.
\end{itemize}
Also, we already defined variables $B'_{c,d}$ for all $c, d \in \Ball(\zeta)$
such that $B'_{c,d}$ produces $\slex(c u' d^{-1})$.

We check for all $c,d \in \Ball(\zeta)$ whether 
\begin{align*}
& \slex(a \cdot \ell_B \cdot c^{-1}) \cdot \eval(B'_{c,d}) \cdot \slex(d \cdot r_B \cdot b^{-1}) \\
     = \ & \slex(a \cdot \ell_B \cdot c^{-1}) \cdot \slex(c \cdot u' \cdot d^{-1}) \cdot \slex(d \cdot r_B \cdot b^{-1}) 
\end{align*}
is shortlex reduced.  
If it is shortlex reduced then it equals 
\[
\slex(a \cdot \ell_B \cdot u' \cdot r_B \cdot b^{-1}) = \slex(a \cdot  u \cdot b^{-1}) = v
\]
See Figure~\ref{fig-tether-elim}.
By Corollary~\ref{cor-quad-two}, there must exist such $c, d \in \Ball(\zeta)$.
Let $v' = \eval(B'_{c,d}) = \slex(c u' d^{-1})$.

Let $s = \slex(a \cdot \ell_B \cdot c^{-1})$ and $t = \slex(d \cdot r_B \cdot b^{-1})$. 
By the triangle inequality, these words have length at least $8 \zeta \eta - 2 \zeta$.
Hence we can factorise these words as $s = wx$ and $t = yz$ with 
\[
|w| = |z| = 8 \zeta (\eta - 1) = 8\zeta \depth_t(A) \geq 8\zeta
\]
Again, see Figure~\ref{fig-tether-elim}.
The words $x$ and $y$ have length at least $6\zeta$. 
We set $\ell_A = w$ and $r_A = z$.
These words satisfy the required bounds on their lengths.
Note that 
\begin{eqnarray*}
&& \eval(A) = \slex(a \cdot u \cdot b^{-1}) = \ell_A \cdot x \cdot v' \cdot y \cdot r_A
\quad \mbox{and} \quad \\
&& |x \cdot v' \cdot y| \geq 12\zeta \geq 2 \zeta
\end{eqnarray*}
It remains to define the right-hand sides of the variables $A'_{a', b'}$ for all $a', b' \in \Ball(\zeta)$.
(This, in essence, is where we call upon Remark~\ref{rem-double-tether}.)

Fix $a', b' \in  \Ball(\zeta)$.
The lower bounds on the lengths of $w, x, y, z$ allow us to apply Lemma~\ref{lem-quad} to the geodesic quadrilaterals with sides labelled $a, \ell_B, c, wx$ and $d, r_B, b, yz$, respectively.  
Note that all of these words have been computed explicitly. 
Applying Lemma~\ref{lem-shortlex-reduction}, we compute in polynomial time words $e, f \in \Ball(\zeta)$ and factorisations $\ell_B = w' x'$ and $r_B = y' z'$
such that $a w' =_G w e$, $e x' =_G x c$, $d y' =_G y f$, and $f z' =_G z b$.
Once again, see Figure~\ref{fig-tether-elim}.
Now consider the geodesic quadrilateral with sides labelled $x' \cdot u' \cdot y'$, $\slex(a' e)$, $\slex(b' f)$, and $\slex(a' e \cdot x' \cdot u' \cdot y' \cdot (b' f)^{-1})$.  
The triangle inequality implies
$|x'|, |y'| \geq 4\zeta$ and $|\slex(a' e)|, |\slex(b' f)| \leq 2\zeta$. 
Again applying Corollary~\ref{cor-quad-two} there are $c', d' \in \Ball(\zeta)$ such that the word
\begin{align*}
& \slex(a' e \cdot x' \cdot (c')^{-1}) \cdot \eval_{\calG'}(B'_{c', d'}) \cdot \slex(d' \cdot y' \cdot (b'f)^{-1}) \\
 = \   & \slex(a' e \cdot x' \cdot (c')^{-1}) \cdot \slex(c' \cdot u' \cdot (d')^{-1}) \cdot \slex(d' \cdot y' \cdot (b' f)^{-1}) 
\end{align*}
is shortlex reduced.  
Thus the above word is 
\[ \slex(a' e \cdot x'  \cdot u' \cdot  y' \cdot (b' f)^{-1}) = \slex(a'  \cdot x  \cdot v' \cdot  y' \cdot (b')^{-1}) \]
As before, we can compute such $c', d' \in  \Ball(\zeta)$ in polynomial time.
We finally define the right-hand side of $A'_{a', b'}$ as
\[
\rho'(A'_{a', b'}) = \slex(a' e \cdot x' \cdot (c')^{-1}) \cdot B'_{c',d'} \cdot \slex(d' \cdot y' \cdot (b' f)^{-1}) 
\]
This concludes the definition of the right-hand sides for the variables $A'_{a', b'}$.
\end{subcase}
\end{case}

We complete the definition of the straight-line program $\calG'$.  
We add a new start variable $S'$ to $\calG'$. 
If $\eval(S)$ is short then we set $\rho'(S') = \eval(S)$ and we are done.
If $\eval(S)$ is long then we set $\rho'(S') = \ell_S \cdot S'_{1, 1} \cdot r_S$.
This ensures $\eval(\calG') = \ell_S \cdot \slex(s') \cdot r_S$, where $s'$ is such that $\ell_S \cdot s' \cdot r_S = \eval(S) = \eval(\calG)$. 
But $\eval(\calG)$ is shortlex reduced (since $\rho(S)$ has the form $A \teth{1, 1}$).
Hence $s'$ is also shortlex reduced and we find 
\[
\eval(\calG') = \ell_S \cdot \slex(s') \cdot r_S = \ell_S \cdot s' \cdot r_S = \eval(\calG)
\]
This concludes the proof of the lemma.
\end{proof}

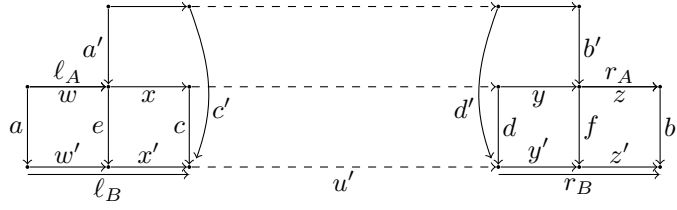
\begin{figure}[t]
  \tikzstyle{vertex}=[circle, fill, inner sep=.5pt, minimum size =.5mm ] 
  \tikzstyle{zero} = [circle, inner sep=0mm]
  \centering{
    \scalebox{1}{
      \begin{tikzpicture}
        \node[vertex] (a) {};
        \node[vertex,  right = 1cm of a] (b) {};
        \node[vertex,  right = 1cm of b] (c) {};
        \node[vertex,  right = 4cm of c] (d) {};
        \node[vertex,  right = 1cm of d] (e) {};  
        \node[vertex, right = 1cm of e] (f) {};
        
        \node[vertex,  above = 1cm of a] (a') {};
        \node[vertex,  above = 1cm of b] (b') {};
        \node[vertex,  above = 1cm of c] (c') {};
        \node[vertex,  above = 1cm of d] (d') {};
        \node[vertex,  above = 1cm of e] (e') {};
        \node[vertex,  above = 1cm of f] (f') {};
      
        \node[vertex,  above = 1cm of b'] (b'') {};
        \node[vertex,  above = 1cm of c'] (c'') {};
        \node[vertex,  above = 1cm of d'] (d'') {};
        \node[vertex,  above = 1cm of e'] (e'') {};

        \node[zero,  above = 1pt of c] (ca) {};
        \node[zero,  right = 1pt of ca] (car) {};
        \node[zero,  above = 1pt of d] (da) {};
        \node[zero,  left = 1pt of da] (dal) {};
           
        \draw [->](a) edge node[above=-.7mm]{$w'$} (b);
        \draw [->](b) edge node[above=-.7mm]{$x'$} (c);
        \draw [->, dashed](c) edge node[below=-.7mm]{$u'$} (d);
        \draw [->](d) edge node[above=-.7mm]{$y'$} (e);
        \draw [->](e) edge node[above=-.7mm]{$z'$} (f);
     
        \draw [->](a') edge node[below=-.7mm]{$w$} (b');
        \draw [->](a') edge node[above=-.7mm]{$\ell_A$} (b');
        \draw [->](b') edge node[below=-.7mm]{$x$} (c');
        \draw [->, dashed](c') edge node[below=-.7mm]{$v'$} (d');
        \draw [->](d') edge node[below=-.7mm]{$y$} (e');
        \draw [->](e') edge node[below=-.7mm]{$z$} (f');
        \draw [->](e') edge node[above=-.7mm]{$r_A$} (f');
          
        \draw [->](b'') edge node[below]{} (c'');
        \draw [->, dashed](c'') edge node[below]{} (d'');
        \draw [->](d'') edge node[below]{} (e'');
        
        \draw [->](a') edge node[left=-.7mm]{$a$} (a);
        \draw [->](b') edge node[left=-.7mm]{$e$} (b);
        \draw [->](c') edge node[left=-.7mm]{$c$} (c);
        \draw [<-](d) edge node[right=-.7mm]{$d$} (d');
        \draw [<-](e) edge node[right=-.7mm]{$f$} (e');
        \draw [<-](f) edge node[right=-.7mm]{$b$} (f');
      
        \draw [->](b'') edge node[left=-.7mm]{$a'$} (b');
        \draw [<-](e') edge node[right=-.7mm]{$b'$} (e'');
      
        \draw [->](c'') to[out=-70,in=70,pos=0.67] node[right=-.7mm] {$c'$} (car);
	\draw [<-](dal) to[out=-250,in=250,pos=0.33] node[left=-.7mm] {$d'$} (d'');
       
        \node[vertex,  inner sep=0pt, minimum size =0mm, below = .6mm of a] (a''') {};
        \node[vertex,  inner sep=0pt, minimum size =0mm, below = .6mm of c] (c''') {};
        \draw [->](a''') edge node[below=-.7mm]{$\ell_B$} (c''');  
       
        \node[vertex,  inner sep=0pt, minimum size =0mm, below = .6mm of d] (d''') {};
        \node[vertex,  inner sep=0pt, minimum size =0mm, below = .6mm of f] (f''') {};
        \draw [->](d''') edge node[below=-.7mm]{$r_B$} (f''');  
  \end{tikzpicture}}}
  \caption{ Case~\ref{case-tether-elim} from the proof of Lemma~\ref{lem-TSLP-SLP}.  
  \label{fig-tether-elim}
Again, dashed lines represent words that are given by straight-line programs. }
\end{figure}

The next lemma follows from Lemma~\ref{lem-TSLP-SLP}.

\begin{lemma} 
\label{lem-TSLP-lengths}
There is a polynomial-time algorithm that, given a geodesic tethered program $\calG$, computes for every $A$ the length
$|\eval(A)|$.
\end{lemma}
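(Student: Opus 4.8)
The plan is to reuse the algorithm of Lemma~\ref{lem-TSLP-SLP} essentially verbatim, but to track lengths alongside the straight-line program being built. Recall that in the proof of Lemma~\ref{lem-TSLP-SLP} we process the variables $A$ of the geodesic tethered program $\calG$ in order of increasing height, and at each step we either (i) explicitly compute and record the short word $w = \eval(A)$, when $|w| \leq 16 \zeta \depth_t(A) + 2\zeta$, or (ii) compute explicit prefix and suffix words $\ell_A$, $r_A$ satisfying the length constraint of Equation~\ref{equ-length}, together with decorated variables $A'_{a,b}$ whose evaluations are the shortlex words $\slex(a w' b^{-1})$, where $w = \ell_A \cdot w' \cdot r_A$. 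In case (i) the length $|\eval(A)| = |w|$ is immediately available. So the only work is to handle case (ii).

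In case (ii) we have $|\eval(A)| = |\ell_A| + |w'| + |r_A|$, and both $|\ell_A|$ and $|r_A|$ are known explicitly (they are bounded, polynomially-sized words that the algorithm has written down). Thus it suffices to compute $|w'|$. But $w' = \eval(A'_{1,1})$: indeed $A'_{1,1}$ produces $\slex(1 \cdot w' \cdot 1^{-1}) = \slex(w')$, and $w'$ is a factor of the shortlex (hence geodesic, by the Remark following Lemma~\ref{lem-shortlex-reduction}'s neighbourhood) word $\eval(A)$, so $w'$ is itself geodesic — but more to the point $w' = \slex(w')$ already since a factor of a shortlex reduced word is shortlex reduced, and $\eval(A)$ is shortlex reduced because the start production has the form $A\teth{1,1}$; more carefully, for a non-start variable $A$ the word $\eval(A)$ is only guaranteed geodesic, but $w'$ is a factor of it, and $A'_{1,1}$ is built to evaluate to $\slex(w')$, whose length equals $|w'|$ since $w'$ is geodesic. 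Either way, $|w'| = |\eval(A'_{1,1})|$. Since $A'_{1,1}$ is a variable of the straight-line program $\calG'$ produced by Lemma~\ref{lem-TSLP-SLP}, we may extract from $\calG'$ the sub-straight-line-program rooted at $A'_{1,1}$ and, using the algorithm cited in Section~\ref{subsec-algo-compressed} (``find the length $|\eval(\calG)|$''), compute $|\eval(A'_{1,1})|$ in polynomial time.

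Putting this together: run the algorithm of Lemma~\ref{lem-TSLP-SLP} on $\calG$; this takes polynomial time and, as a by-product, records for every variable $A$ either the short word $w$ (case (i)) or the bounded words $\ell_A, r_A$ together with the variable name $A'_{1,1}$ (case (ii)). For each $A$ in case (i) output $|w|$; for each $A$ in case (ii), compute $|\eval(A'_{1,1})|$ from $\calG'$ in polynomial time and output $|\ell_A| + |\eval(A'_{1,1})| + |r_A|$. The total running time is polynomial, since Lemma~\ref{lem-TSLP-SLP} runs in polynomial time, there are polynomially many variables, and each length query on $\calG'$ is polynomial-time. There is no real obstacle here; the only point requiring a line of care is the bookkeeping identity $|\eval(A)| = |\ell_A| + |\eval(A'_{1,1})| + |r_A|$, which is immediate from the invariant $\eval(A) = \ell_A \cdot w' \cdot r_A$ maintained in the proof of Lemma~\ref{lem-TSLP-SLP} together with $w' = \slex(w')$ having the same length as the geodesic word $w'$.
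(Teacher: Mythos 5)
Your argument is correct, and it rests on the same three ingredients as the paper's proof: Lemma~\ref{lem-TSLP-SLP}, the polynomial-time length computation for straight-line programs from Section~\ref{subsec-algo-compressed}, and the observation that geodesicity of $\eval(A)$ makes shortlex reduction length-preserving. The difference is in how you deploy them. The paper treats Lemma~\ref{lem-TSLP-SLP} as a black box: for each variable $A$ it applies the lemma to the sub-program with start variable $A$, obtaining a straight-line program $\calG_A$ with $\eval(\calG_A)=\slex(\eval(A))$, computes $|\eval(\calG_A)|$, and concludes $|\eval(A)|=|\slex(\eval(A))|$ since $\calG$ is geodesic --- a three-line proof. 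You instead open up the construction from the proof of Lemma~\ref{lem-TSLP-SLP} and read the lengths off its internal bookkeeping, via $|\eval(A)|=|\ell_A|+|\eval(A'_{1,1})|+|r_A|$, using that $w'$ is a factor of the geodesic word $\eval(A)$ and hence geodesic, so $|\eval(A'_{1,1})|=|\slex(w')|=|w'|$. This is sound (the decorated variables $A'_{a,b}$ are created for all $a,b\in\Ball(\zeta)$, so $A'_{1,1}$ exists whenever the long-word case occurs, and $\ell_A$, $r_A$ are explicit words of polynomial length), and it has the minor advantage of a single run of the construction rather than one per variable; but it buys this at the cost of depending on details of the earlier proof rather than its statement, and your wavering over whether $w'=\slex(w')$ (it need not be for a non-start variable) shows why the paper's route --- which only ever invokes geodesicity, never shortlex-reducedness, of $\eval(A)$ --- is the cleaner one.
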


\begin{proof}
By Lemma~\ref{lem-TSLP-SLP}, we can compute for every variable $A$ a straight-line program $\calG_A$ with $\eval(\calG_A) = \slex(\eval(A))$.
As in Section~\ref{subsec-algo-compressed}, we can compute $|\eval(\calG_A)| = |\slex(\eval(A))| = |\eval(A)|$ in polynomial time.  
Here the last equality holds since $\calG$ is a geodesic program.
\end{proof}

We now can prove our proposition; this generalises Lemma~\ref{lem-TSLP-SLP} to tether-cut programs.

\begin{proposition} 
\label{prop-TCSLP-SLP}
There is a polynomial-time algorithm that, given a geodesic tether-cut program $\calG$, computes a shortlex straight-line program $\calG'$ such that $\eval(\calG') = \slex(\eval(\calG))$.
\end{proposition}

\begin{proof}
The idea of the proof is taken from the proof of Theorem~\ref{thm-hagenah}; 
see~\cite[Algorithmus~8.1.4]{Hag00}.  
That is, we will eliminate cut operators by pushing them towards smaller variables.  
We then appeal to Lemma~\ref{lem-TSLP-SLP} to eliminate tether operators. 

Let $\calG = (V, S, \rho)$ be the input geodesic tether-cut program.  
By Lemma~\ref{lem-CNF}, we can assume that $\calG$ is in Chomsky normal form. 
Let $\mu = \height(\calG)$. By Lemma~\ref{lem-TSLP-SLP} it suffices to transform $\calG$ into
a geodesic tethered program for $\eval(\calG)$.

We will only consider cuts of the form $[:i]$ and $[i:]$; see Remark~\ref{remark-cut}.
It is not difficult to include also general cuts of the form $[i:j]$.

Consider a variable $A$ such that $\rho(A) = B[:i]$; the case that $\rho(A) = B[i:]$ is dealt with analogously.
We consider variables in order of \emph{increasing} height; so the algorithm is bottom-up.  
By induction we may assume that no cut operator occurs in the right-hand side of any variable $C$ with height less than that of $A$. 

We now must eliminate the cut operator in $\rho(A)$. 
In so doing, we add at most $\mu$ new variables to the tether-cut program. 
Moreover the height of the tether-cut program after the cut elimination will still be bounded by $\mu$. 
Hence, the final tethered program will have at most $\mu \cdot |V|$ variables. 
In addition, the bit length of every  new right-hand side will be polynomially bounded in the input length.
Thus, the size of the  final tethered program will be polynomially bounded in the input length.

Recall that $\rho(A) = B[:i]$. 
We divide the work into cases, depending on the form of $\rho(B)$.  
Since we already have processed $B$, only one of the following cases can occur.

\begin{case_t}
Suppose $\rho(B) = a \in \Sigma$.
If $i = 1$ we redefine $\rho(A) = a$, and if $i = 0$ we redefine $\rho(A) = \emptyword$.
\end{case_t}

\begin{case_t}
\label{case-concat}
Suppose $\rho(B) = CD$ with $C, D \in V$.  
We compute $n_C = |\eval(C)|$ using Lemma~\ref{lem-TSLP-lengths} and with an appeal to the induction hypothesis. 
If $i \leq n_C$ then we redefine $\rho(A) = C[:i]$.
If $i > n_C$ then we add a new variable $X$, we define $\rho(X) = D[:i - n_C]$, and we redefine $\rho(A) = CX$.
We then eliminate the new cut operator in $C[:i]$ or in $D[:i-n_C]$ with a top-down sub-routine.  
(This is what leads to the quadratic growth of new variables.)
\end{case_t}

\begin{case_t}
Suppose $\rho(B) = C\teth{a, b}$ with $C \in V$ and $a, b \in \Ball(\zeta)$.
Let $u = \eval_{\calG}(C)$, $v = \eval_{\calG}(B)$, and $v = v' v''$ with $|v'| = i$. 
Thus, we have $\eval_{\calG}(A) = v'$ and $v = \slex(a u b^{-1})$. 
By Lemma~\ref{lem-TSLP-SLP} we can assume that we have straight-line programs for the words $u$ and $v$.

\begin{subcase_t}
\label{sub-near-front}
There exists $c \in \Ball(\zeta)$ and a factorisation $a = a' a''$ such that $v'  =_G a'c$; see Figure~\ref{fig-near-front}. 
Note that this implies that $i = |v'| \leq 2\zeta$.
Hence, we can check in polynomial time whether this case holds by computing the prefix of the compressed word $v$ of length $i$. 
We redefine $\rho(A) = v'$.
\end{subcase_t}

\begin{figure}
  \tikzstyle{small} = [circle,draw=black,fill=black,inner sep=.15mm]
  \tikzstyle{zero} = [circle,inner sep=0mm]
  \centering{
    \scalebox{1}{
      \begin{tikzpicture}
        
        \node[small] (1) {};
        \node[small,  right = 6cm of 1] (2) {};
        \node[small,  above = 1.5cm of 1] (3) {};
        \node[small,  right = 6cm of 3] (4) {};
        
        \node[zero,  above = .6mm of 3] (3') {};
        \node[zero,  above = .6mm of 4] (4') {};
        
        \draw [->] (3) to [out=-30, in=-150] node[pos=.08,below=-.5mm]{$v'$} 
                                             node[pos=0.2, small] (b) {}
                                             node[pos=.6,below=-.7mm]{$v''$} (4);

        \draw [->](3') to [out=-30, in=-150] node[above=-.7mm]{$v$} (4'); 
     
        \draw [->] (3) to node[pos=.25,left=-.7mm]{$a'$} 
                          node[pos=0.5, small] (a) {}
                          node[pos=.75,left=-.7mm]{$a''$} (1);
        
        \draw [<-](2) edge node[right=-.7mm]{$b$} (4);
        \draw [->](1) edge node[above=-.7mm]{$u$} (2);
        \draw [->](a) to [out=0, in=-150] node[below=-.7mm]{$c$} (b);
  \end{tikzpicture}}}
  \caption{ Case~\ref{sub-near-front} in the proof of Proposition~\ref{prop-TCSLP-SLP}. }
  \label{fig-near-front} 
\end{figure}

\begin{subcase_t}
\label{sub-near-back}
There exists $c \in \Ball(\zeta)$ and a factorisation $b = b'' b'$ such that $v'' b'' =_G c$; see Figure~\ref{fig-near-back}. 
As in Case~\ref{sub-near-front} we can check in polynomial time whether this condition holds. 
We introduce a new variable $X$, we set $\rho(X) = C \teth{1, b'}$, and we redefine $\rho(A) = X \teth{a, c}$.
\end{subcase_t}

\begin{figure}
  \centering{
    \scalebox{1}{
      \begin{tikzpicture}
        \tikzstyle{small} = [circle,draw=black,fill=black,inner sep=.15mm]
        \tikzstyle{zero} = [circle,inner sep=0mm]

        \node[small] (1) {} ;
        \node[small, right = 6cm of 1] (2) {};
        \node[small, above = 1.5cm of 1] (3) {};
        \node[small, right = 6cm of 3] (4) {};
    
        \node[zero, above = .6mm of 3] (3') {};
        \node[zero, above = .6mm of 4] (4') {};
      
        \draw [->] (3) to [out=-30, in=-150] node[pos=.4,below=-.7mm] {$v'$} node[pos=0.8, small] (b) {}  node[pos=.92,below=-.8mm]  {$v''$}  (4);
        \draw [->](3') to [out=-30, in=-150] node[above=-.7mm]{$v$} (4'); 
     
        \draw [->](3) edge node[left=-.7mm]{$a$} (1);
        \draw [<-] (2) to node[pos=.25,right=-.7mm] {$b'$} node[pos=0.5, small] (a) {}  node[pos=.75,right=-.7mm]  {$b''$}  (4);
        
        \draw [->](1) edge node[above=-.7mm]{$u$} (2);
        \draw [->](b) to [out=-30, in=180] node[below=-.7mm]{$c$} (a);
  \end{tikzpicture}}}
  \caption{Case~\ref{sub-near-back} in the proof of
    Proposition~\ref{prop-TCSLP-SLP}.}
  \label{fig-near-back} 
\end{figure}
  
\begin{subcase_t}
Neither Case~\ref{sub-near-front} nor Case~\ref{sub-near-back} holds. 
In this case, there exists a factorisation $u = u' u''$ and $c \in \Ball(\zeta)$ such that $v' c =_G a u'$ and $v'' b =_G c u''$; see Figure~\ref{fig-quad}.
The triangle inequality implies $i - 2 \zeta \leq |u'| \leq i + 2 \zeta$. 
We can find such a factorisation of $u$ in polynomial time; we note that $j = |u'|$ lies in $\NN$ and satisfies $|i - j| \leq 2\zeta$.  
So, using Theorem~\ref{thm-hagenah}, we find straight-line programs for the $4\zeta+1$ many words $u' = u[:j]$, where $j \in \NN$, $|i - j| \leq 2\zeta$.
Since $u$  is geodesic, also all factors of $u$ are geodesic. Hence, the straight-line programs for the words $u' = u[:j]$ must be geodesic too.
Then we apply Lemma~\ref{lem-TSLP-SLP} and compute for every $c \in \Ball(\zeta)$ a shortlex straight-line program for the word $w' = \slex(a u' c^{-1})$.  
Theorem~\ref{thm-hagenah} yields a shortlex straight-line  program for $v' = v[:i]$.  
Finally, we check, using Theorem~\ref{thm-plandowski}, whether $v' = w'$.

Hyperbolicity ensures that we will find at least one such $j$ and $c$.  
We introduce a new variable $X$, we set $\rho(X) = C[:j]$, and we redefine $\rho(A) = X \teth{a, c}$ 
We then continue with the elimination of the cut operator in $C[:j]$, as in Case~\ref{case-concat}.
\end{subcase_t}

\end{case_t}
This concludes the proof of the lemma.
\end{proof}

Recall our convention: if $w \in \Sigma^*$ is a word then $g_w \in G$ is the corresponding group element.  
Thus $g_w$ is a vertex of the Cayley graph $\Gamma = \Gamma(G, \Sigma)$. 

\begin{lemma} 
\label{lem-small-dist}
There is a polynomial-time algorithm that, given geodesic tether-cut programs $\calG$ and $\calH$, determines if $d_\Gamma(g, h) \leq \delta$, where $g = g_{\eval(\calG)}$ and $h = g_{\eval(\calH)}$.
Moreover, when this holds, the algorithm also finds an element $b \in \Ball(\delta)$ such that $g =_G h b$.
\end{lemma}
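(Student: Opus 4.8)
The plan is to reduce the problem to shortlex-reduced words so that we can use Plandowski's algorithm (Theorem~\ref{thm-plandowski}). First I would apply Lemma~\ref{lem-TCSLP-SLP} to $\calG$ and $\calH$ to obtain straight-line programs $\calG_0$ and $\calH_0$ with $\eval(\calG_0) = \slex(\eval(\calG))$ and $\eval(\calH_0) = \slex(\eval(\calH))$; both are computed in polynomial time, and they represent the same group elements $g$ and $h$ respectively. The task is now to decide, given straight-line programs for the \emph{shortlex representatives} of $g$ and $h$, whether $d_\Gamma(g,h) \le \delta$, and if so to exhibit $b \in \Ball(\delta)$ with $g =_G hb$.

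The key observation is that there are only finitely many elements $b \in \Ball(\delta)$, and this number depends only on $G$ and $\Sigma$, hence is a constant. So I would iterate over all $b \in \Ball(\delta)$ (each given by its shortlex representative, a fixed short word) and, for each, test whether $g =_G hb$. To perform this test with Theorem~\ref{thm-plandowski} we need straight-line programs whose \emph{outputs are equal as words} exactly when the group elements agree. Since $\eval(\calG_0)$ is already shortlex reduced, it suffices to produce a straight-line program whose output is $\slex(\eval(\calH_0) \cdot b)$ and compare it with $\calG_0$. To build such a program, form the tethered program obtained by adding to $\calH_0$ a new start variable $S'$ with right-hand side $H_0 \teth{1, b^{-1}}$, where $H_0$ is the start variable of $\calH_0$ — this is a geodesic tethered program (all of $\calH_0$'s variables evaluate to factors of a shortlex-reduced, hence geodesic, word, and the new top variable evaluates to a shortlex word), and by definition its evaluation is $\slex(\eval(\calH_0) \cdot b)$. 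Applying Lemma~\ref{lem-TSLP-SLP} turns it into a straight-line program $\calK_b$ with $\eval(\calK_b) = \slex(\eval(\calH_0) \cdot b) = \slex(hb)$. Then $g =_G hb$ if and only if $\slex(g) = \slex(hb)$, i.e.\ if and only if $\eval(\calG_0) = \eval(\calK_b)$, which Theorem~\ref{thm-plandowski} decides in polynomial time. Since there are only constantly many $b$, the total running time is polynomial.

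Finally, $d_\Gamma(g,h) \le \delta$ holds if and only if $h^{-1}g \in \Ball(\delta)$, i.e.\ if and only if $g =_G hb$ for some $b \in \Ball(\delta)$; so the algorithm answers ``yes'' precisely when one of the tests above succeeds, and it returns the corresponding $b$. The main obstacle — really the only subtlety — is making sure the group-element comparison $g =_G hb$ is carried out via an \emph{honest word comparison} between straight-line programs, which is why we route everything through shortlex normal forms using Lemmas~\ref{lem-TCSLP-SLP} and~\ref{lem-TSLP-SLP}; once that is set up, the argument is just a bounded search combined with Plandowski's theorem. One should also double-check that the $\teth{1,b^{-1}}$ tether is legitimate, which requires $b^{-1} \in \Ball(\zeta)$; this holds since $\delta \le 2\delta = \zeta$ and $\Ball(\delta)$ is symmetric.
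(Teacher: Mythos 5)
Your proposal is correct and follows essentially the same route as the paper: enumerate the constantly many $b \in \Ball(\delta)$, use a tether operator $\teth{1,\cdot}$ to produce a program for the shortlex reduction of the relevant product, convert to straight-line programs via Lemmas~\ref{lem-TCSLP-SLP}/\ref{lem-TSLP-SLP}, and compare outputs with Theorem~\ref{thm-plandowski}. The only difference is cosmetic (you tether $\calH$ by $b^{-1}$ and compare with $\slex(g)$, while the paper tethers $\calG$ by $b$ and compares with $\slex(h)$).
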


\begin{proof}
Let $S$ and $T$ be the start variables of $\calG$ and $\calH$, respectively.
For all $b \in \Ball(\delta)$ we produce a new geodesic tether-cut program $\calG^b$ for $\slex( \eval(\calG) b^{-1})$.  
We do this by adding to $\calG$ a new start variable with right-hand side $S \teth{1, b}$.

We also add to $\calH$ a new start variable with right-hand side $T \teth{1,1}$ and denote the resulting 
tether-cut program by $\calH^1$.
This ensures that the evaluation of $\calH^1$ is $\slex( \eval(\calH) )$. 
Using Proposition~\ref{prop-TCSLP-SLP} and Theorem~\ref{thm-plandowski} we now check, in polynomial time, if $\eval(\calG^b) = \eval(\calH^1)$.  
This is equivalent to $g =_G h b$.
\end{proof}

\subsection{Solving the compressed word problem}

We now prove our main result.  
Recall that $\Sigma$ is a symmetric generating set for the hyperbolic group $G$.  

\begin{theorem} 
\label{thm-SLP-for-shortlex}
There is a polynomial-time algorithm that, given a straight-line program $\calG$ over $\Sigma$, finds a straight-line program $\calH$ with evaluation $\slex(\eval(\calG))$.
\end{theorem}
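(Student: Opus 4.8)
The plan is to reduce Theorem~\ref{thm-SLP-for-shortlex} to Lemma~\ref{lem-TCSLP-SLP}: it suffices to transform the input straight-line program $\calG$ into a \emph{geodesic} tether-cut program $\calH_0$ with $\eval(\calH_0) =_G \eval(\calG)$, and then apply Lemma~\ref{lem-TCSLP-SLP} to obtain a straight-line program $\calH$ with $\eval(\calH) = \slex(\eval(\calH_0)) = \slex(\eval(\calG))$. So the whole task is to produce such a $\calH_0$ in polynomial time. First I would put $\calG$ into Chomsky normal form (linear time, \cite[Proposition~3.8]{Loh14}); so every production is $\rho(A) = a \in \Sigma$ or $\rho(A) = BC$. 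We process the variables bottom-up, in order of increasing height, maintaining the following invariant: for each variable $A$ we have constructed a geodesic tether-cut program whose start variable $\widehat A$ satisfies $\eval(\widehat A) =_G \eval(A)$ and is geodesic.

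For $\rho(A) = a$ the word $a$ is already geodesic (a single letter), so $\widehat A := A$ works. The interesting case is $\rho(A) = BC$. By induction we have geodesic representatives $u := \eval(\widehat B)$ and $v := \eval(\widehat C)$ with $u v =_G \eval(A)$, but $uv$ need not be geodesic. The key geometric idea — exactly as sketched in Remark~\ref{rem-double-tether} and Figure~\ref{fig-cutting-off-peak} — is that a geodesic from $1_G$ to $g_{uv}$ fellow-travels a prefix of $P_u$ and then a suffix of $g_u P_v$, with a bounded-length "peak" cut off in the middle. Concretely: compute $|u|$ and $|v|$ using Lemma~\ref{lem-TSLP-lengths} (applied to the tethered, in fact tether-cut, program built so far — note that Lemma~\ref{lem-TSLP-lengths} is stated for tethered programs, so one should first run Lemma~\ref{lem-TCSLP-SLP} or its length-only corollary, or equivalently observe that the length of a geodesic tether-cut program's output is computable in polynomial time by running Lemma~\ref{lem-TCSLP-SLP} and reading off the output length). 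If $\min(|u|,|v|)$ is bounded by a fixed constant (on the order of $2\delta$), we can afford to compute that side explicitly and shortlex-reduce the product directly via Lemma~\ref{lem-shortlex-reduction}. Otherwise we search over $b \in \Ball(\delta)$ and over cut points: using Lemma~\ref{lem-small-dist} (or directly Lemma~\ref{lem-TCSLP-SLP} together with Theorem~\ref{thm-plandowski} and the regularity of geodesic words from Lemma~\ref{lem-shortlex-regular} and Proposition~\ref{prop-fsa}), we locate the prefix $u' = u[:j]$ of $u$ and suffix $v'' = v[k:]$ of $v$, and a short word $c \in \Ball(2\delta)$, so that $u' c v'' =_G uv$ and $u' c v''$ is geodesic; hyperbolicity (Lemma~\ref{lem-quad-two}) guarantees that such $j, k, c$ exist with $|j - |u|| \le 2\delta$ and $|k| \le 2\delta$. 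We then set $\rho(\widehat A) = \widehat B[:j] \cdot c \cdot \widehat C[k:]$, introducing the cut operators that the tether-cut formalism permits, and check geodesicity of the result via Proposition~\ref{prop-fsa} and Lemma~\ref{lem-shortlex-regular}. This keeps the height bounded and adds only a bounded amount per variable, so the final $\calH_0$ has polynomial size and is geodesic; feeding it to Lemma~\ref{lem-TCSLP-SLP} finishes the proof.

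\textbf{Main obstacle.} The delicate point is the bookkeeping that makes the bottom-up recursion stay polynomial: each time we combine $\widehat B$ and $\widehat C$ we must be sure we are not nesting cut operators arbitrarily deeply, and that the length queries (which secretly invoke Lemma~\ref{lem-TCSLP-SLP}, itself a polynomial-time but nontrivial routine) are only called polynomially often and on programs of polynomially bounded size. Organizing the invariant so that each $\widehat A$ is a tether-cut program of size polynomial in the part of $\calG$ below $A$ — rather than size polynomial in the whole of $\calG$, which would blow up when iterated — is the real content; this is precisely the kind of "push cuts toward the leaves and eliminate them" discipline that Lemma~\ref{lem-TCSLP-SLP} and Hagenah's theorem (Theorem~\ref{thm-hagenah}) are designed to handle, so in the cleanest writeup one would build the \emph{entire} geodesic tether-cut program $\calH_0$ first (with new cut/tether variables at every internal node of $\calG$) and only invoke Lemma~\ref{lem-TCSLP-SLP} once at the very end, thereby quarantining all the cut-elimination cost inside a single application of an already-proved lemma. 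The geometric input — that a bounded peak suffices, via $\delta$-thinness and Lemmas~\ref{lem-quad} and~\ref{lem-quad-two} — is routine given the machinery already set up; the engineering of the polynomial bound is where care is needed.
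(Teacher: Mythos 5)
Your overall architecture---put $\calG$ in Chomsky normal form, process variables bottom-up, at each production $\rho(A)=BC$ replace the possibly non-geodesic concatenation $uv$ by a prefix of $u$, a bounded connecting word, and a suffix of $v$, and use Lemma~\ref{lem-TCSLP-SLP} to convert the resulting geodesic tether-cut program into a straight-line program---is exactly the paper's plan, and the bookkeeping worries in your last paragraph are not the real difficulty (the paper simply calls Lemma~\ref{lem-TCSLP-SLP} at every variable; each call is on a program of polynomial size, so the total cost is polynomial). The genuine gap is in how you locate the cut points $j$ and $k$. Your claim that hyperbolicity forces $|j-|u||\le 2\delta$ and $k\le 2\delta$ is false: the cancellation between the geodesic words $u$ and $v$ can be arbitrarily large (take $v$ close to $u^{-1}$), in which case the correct cut points sit near the Gromov product $(|u|+|\slex(uv)|-|v|)/2$, which can lie anywhere in $[0,|u|]$. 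A concatenation of two geodesics in a hyperbolic space can backtrack along almost its entire length; the peak is cut off near the meeting points of the thin triangle on $1_G$, $g_u$, $g_{uv}$, not near the concatenation point. Since $|u|$ and $|v|$ may be exponential in $|\calG|$, ``searching over cut points'' is neither a constant-size nor a polynomial-size search as written, so the step where you ``locate'' $j$, $k$, $c$ has no polynomial-time implementation, and the existence statement you attribute to Lemma~\ref{lem-quad-two} does not hold.

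This is precisely where the paper spends its effort. It first tests, via Lemma~\ref{lem-small-dist}, whether $d_\Gamma(\bar{P}(m),Q(m))\le\delta$ for $m=\min(|u|,|v|)$ (in which case a single tether operator suffices and no cut point must be found); otherwise it runs a \emph{binary search}, again powered by Lemma~\ref{lem-small-dist}, for a position $k$ with $d_\Gamma(\bar{P}(k),Q(k))\le\delta$ and $d_\Gamma(\bar{P}(k+1),Q(k+1))>\delta$, which takes only $O(\log m)$ calls. Lemma~\ref{lem-thin-triangle} (which you never invoke) then guarantees that $\bar{P}(k+1)$ and $Q(k+1)$ are both $\delta$-close to points of the geodesic side $R$ labelled $\slex(uv)$, so a constant-size search over $b,c\in\Ball(\delta)$, combined with a shortlex-reducedness test (Lemma~\ref{lem-shortlex-regular} plus Proposition~\ref{prop-fsa}) on the compressed word $\slex(u[:m-j]\cdot b^{-1})\cdot s\cdot\slex(c\cdot v[j:])$, certifies a correct decomposition; note also that correctness of the pieces comes from Lemma~\ref{lem-small-dist} returning the connecting element, not from checking $u'cv''=_G uv$ directly, which would itself be a compressed word problem instance. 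A smaller slip: when one of $|u|,|v|$ is bounded but the other is exponentially long, you cannot ``compute that side explicitly and shortlex-reduce the product directly via Lemma~\ref{lem-shortlex-reduction}'', since the product is far too long to write down; that case must also be handled with tether and cut operators on the compressed data.
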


\begin{proof}
By Proposition~\ref{prop-TCSLP-SLP} it suffices to build, in polynomial time, a geodesic tether-cut program $\calH$ for $\slex(\eval(\calG))$. 
We process $\calG$ from the bottom-up; that is, we consider its variables in order of increasing height. 
Set $\calG = (V, S, \rho)$; applying~\cite[Proposition~3.8]{Loh14}, we may assume that $\calG$ is in Chomsky normal form.
We build by induction on the height a new tether-cut program $\calG' = (V', S', \rho')$ over $\Sigma$; here $V' = \{ A' \mid A \in V\}$ is a copy of $V$ and $S' \in V'$ is the variable corresponding to $S$.  
The construction will ensure that $\eval(A') = \slex(\eval(A))$ for every $A \in V$.

In the base cases we have $\rho(A) = a \in \Sigma$.  Here we set $\rho'(A') = \slex(a)$.  

In the inductive step we have $\rho(A) = BC$.  
Since $B$ and $C$ have smaller height than $A$ they satisfy the induction hypotheses. 
Set 
\[
u = \slex(\eval(B)) = \eval(B')
\quad \mbox{and} \quad
v = \slex(\eval(C)) = \eval(C')
\]
By Proposition~\ref{prop-TCSLP-SLP} we can transform the geodesic tether-cut programs with start variables $B'$ and $C'$ into shortlex straight-line programs. 
Using these, we compute the lengths $m = |u|$ and $n = |v|$.  
If one or both of these have length zero then we accordingly take $\rho'(A') = C'$ or $\rho'(A') = B'$ or $\rho'(A') = \emptyword$. 
We now assume that $m$ and $n$ are both non-zero.  
Breaking symmetry, we assume that  $m \leq n$.

Let $P$ be the path in the Cayley graph $\Gamma$ starting at $1_G$, ending at $u$, and labelled by $u$.  Similarly, let $Q$ be the path starting at $u$, ending at $uv$, and labelled by $v$.  Finally, let $R$ be the path starting at $1_G$, ending at $uv$, and labelled by $\slex(uv)$. 
See Figure~\ref{fig-cutting-off-peak}.  The path $\bar{P}$, the reverse of $P$, is labelled by $u^{-1}$.  Applying Remark~\ref{rem-inverse} we invert the geodesic straight-line program for $u$ to give a straight-line program for $u^{-1}$.  Using Lemma~\ref{lem-small-dist} we can check whether or not
\[
d_\Gamma(\bar{P}(m), Q(m)) \leq \delta
\]
We break into cases accordingly. 

\begin{case_s}
\label{case-end-close}
Suppose that $d_\Gamma(\bar{P}(m), Q(m)) \leq \delta$. 
We compute, again using Lemma~\ref{lem-small-dist}, a word $a$ of length at most $\delta$ such that $a =_G u v[:m]$.
See the left-hand side of Figure~\ref{fig-cutting-off-peak}.  
In this case we set $\rho'(A') = C'[m:] \teth{a, 1}$.
\end{case_s}

\begin{case_s}
\label{case-end-far}
Suppose that $d_\Gamma(\bar{P}(m), Q(m)) > \delta$. 
Using binary search, we compute an integer $k \in [0, m - 1]$ such that
\[
d_\Gamma(\bar{P}(k), Q(k)) \leq \delta
\quad \mbox{and} \quad
d_\Gamma(\bar{P}(k + 1), Q(k + 1)) > \delta
\]
Here are the details of the binary search.  
We store an interval $[p, q] \subseteq [0, m]$ such that 
\begin{itemize}
\item
$p < q$, 
\item
$d_\Gamma(\bar{P}(p), Q(p)) \leq \delta$, and
\item
$d_\Gamma(\bar{P}(q), Q(q)) > \delta$.
\end{itemize}
We begin with $p = 0$ and $q = m$.  
We stop when $q = p + 1$. 
In each iteration, we compute $r = \ceiling{(p + q)/2}$ and check, using Lemma~\ref{lem-small-dist}, whether 
\[
d_\Gamma(\bar{P}(r), Q(r)) \leq \delta
\quad \mbox{or} \quad
d_\Gamma(\bar{P}(r), Q(r)) > \delta
\]
In the first case we set $p = r$ and do not change $q$; in the second case we set $q = r$ and do not change $p$. 
In each iteration the size of the interval $[p, q]$ is roughly halved. 
Thus the binary search halts after $O(\log(m))$ iterations; this is polynomial in the input size. 
In addition to the final position $k$, we record a word $a \in \Ball(\delta)$ that labels a path from $\bar{P}(k)$ to $Q(k)$.  
Let $j = k + 1$. 

Recall that $R$ is the path from $\bar{P}(m) = 1_G$ to $Q(n)$ labelled by $\slex(uv)$. 
By Lemma~\ref{lem-thin-triangle} there exist $i_P \leq i_Q$ such that 
\[
d_\Gamma( \bar{P}(j), R(i_P)) \leq \delta
\quad \mbox{and} \quad
d_\Gamma( Q(j), R(i_Q)) \leq \delta
\]
For all pairs $b, c \in \Ball(\delta)$ we explicitly compute the word 
\[ 
s = \slex( b \cdot u[m - j] \cdot a \cdot v[k] \cdot c^{-1}) 
\]
The symbols $u[m - j]$ and $v[k]$ can be computed in polynomial time from the available straight-line programs for $u$ and $v$ using
\cite[Proposition~3.9]{Loh14}; see Section~\ref{subsec-algo-compressed}.
For each of these words $s$, we must check if the word
\begin{equation} 
\label{lower-triangle-side}
\slex(u[ : m - j] \cdot b^{-1}) \cdot s \cdot \slex( c \cdot v[j : ] )
\end{equation}
is shortlex reduced; if so, it equals $\slex(uv)$.  
This step can be done using Lemma~\ref{lem-shortlex-regular} and using the given geodesic tether-cut programs for $u$ and $v$.  
From these we obtain geodesic tether-cut programs for $\slex( u[ : m - j] \cdot b^{-1})$ and $\slex( c \cdot v[j : ] )$.  We then use Proposition~\ref{prop-TCSLP-SLP} to transform these into equivalent straight-line programs.

Lemma~\ref{lem-thin-triangle} ensures that that we will find a pair $b, c \in \Ball(\delta)$ such that the word in \eqref{lower-triangle-side} is shortlex reduced.  
Using the first such pair we find, we set  
\[
\rho'(A') = ( B'[ : m - j] \teth{1, b}) \cdot s \cdot (C' [j : ] \teth{c, 1} )
\]
\end{case_s}

This concludes the proof of the theorem.
\end{proof}

\begin{figure}
  \centering{
    \scalebox{1}{
      \begin{tikzpicture}
        \tikzstyle{small} = [circle,draw=black,fill=black,inner sep=.15mm]
        \tikzstyle{zero} = [circle,inner sep=0mm]

        \node[small] (1) {} ;
        \node[zero, above = 0.3mm of 1] (1') {} ;
        \node[small,  right = 1.7cm of 1] (1a) {} ;
        \node[small,  right = 2.3cm of 1] (1b) {} ;
        \node[small,  right = 4cm of 1] (2) {} ;
        \node[zero, above = 0.3mm of 2] (2') {} ;
        \node[small,  above right = 2.5cm and 2cm of 1] (3) {} ;
        \node[zero, left = 0.3mm of 3] (3') {} ;
        \node[zero, right = 0.3mm of 3] (3'') {} ;
    
        \draw [->] (1) to (1a) ;
        \draw [->] (1a) to node[below=-.7mm] {$s$} (1b) ;
        \draw [->] (1b) to (2) ;
    
        \draw [-] (1) to [out=5, in=-95] node[pos=0.45, small] (2a) {}  node[pos=0.55, small] (2b) {}  (3);
        \draw [->] (1') to [out=5, in=-95] node[above left =-.7mm and -.7mm] {$u$} (3');
        \draw [-] (3) to [out=-85, in=175] node[pos=0.45, small] (3a) {}  node[pos=0.55, small] (3b) {}  (2);
        \draw [->] (3'') to [out=-85, in=175] node[above right =-.7mm and -.7mm] {$v$} (2');
        \draw [->] (2b) to node[above=-.7mm] {$a$} (3a) ;
        \draw [<-] (2a) to node[left=-.7mm] {$b$} (1a) ;
        \draw [->] (1b) to node[right=-.7mm] {$c$} (3b) ;
    
        \node[small, above left = 1cm and 5cm of 1] (4) {} ;
        \node[small,  above right = 1.5cm and 1cm of 4] (5) {} ;
        \node[zero, right = 0.3mm of 5] (5') {} ;
        \node[small,  below right = 1cm and 2cm of 4] (6) {} ;
        \node[zero, above = 0.3mm of 6] (6') {} ;
        \draw [->] (4) to [out=5, in=-95] node[above left = -.7mm and -.3mm] (c) {$u$}  (5);
    
        \draw [-] (5) to [out=-85, in=175] node[pos=0.457, small] (56) {}  (6);
        \draw [->] (5') to [out=-85, in=175] node[above right =-.7mm and -.7mm] {$v$} (6');
        \draw [->] (4) to node[below=-.7mm, pos=0.6] {$a$} (56) ;
        \draw [->] (4) to [out=-40, in=180]  (6);
     
  \end{tikzpicture}}}
  \caption{Case~\ref{case-end-close} (left) and~\ref{case-end-far} (right) from the proof of Theorem~\ref{thm-SLP-for-shortlex}.}
  \label{fig-cutting-off-peak} 
\end{figure}
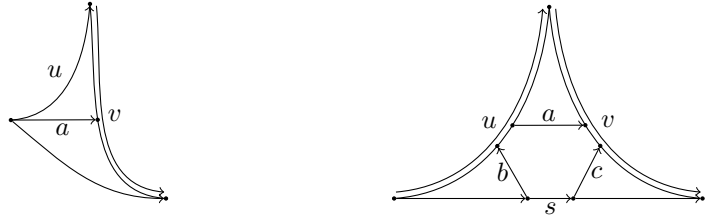

Theorem~\ref{thm-SLP-for-shortlex} now solves the compressed word problem. 

\begin{corollary} 
\label{coro-CWP-hyperbolic}
The compressed word problem for a hyperbolic group can be solved in polynomial time.
\end{corollary}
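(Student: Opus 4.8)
The plan is to derive this directly from Theorem~\ref{thm-SLP-for-shortlex}; essentially all of the work has already been done. Fix a hyperbolic group $G$. Since the complexity of the compressed word problem does not depend on the chosen symmetric generating set (see~\cite[Lemma~4.2]{Loh14}, as recalled in Section~\ref{sec-SLP}), we may fix $\Sigma$ together with a total order on it, so that the shortlex order on $\Sigma^*$ is defined. Given as input a straight-line program $\calG$ over $\Sigma$, the word $w = \eval(\calG)$ represents the identity of $G$ if and only if $\slex(w) = \emptyword$, because the shortlex reduced representative of a group element is unique and the unique representative of $1_G$ is the empty word (it is shorter than every nonempty word).

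So the algorithm proceeds as follows. First I would apply the polynomial-time procedure of Theorem~\ref{thm-SLP-for-shortlex} to $\calG$, obtaining a straight-line program $\calH$ with $\eval(\calH) = \slex(\eval(\calG))$. Then I would compute the length $|\eval(\calH)|$; this can be done in polynomial time by the standard routines for straight-line programs recalled in Section~\ref{subsec-algo-compressed}. Finally, the algorithm accepts if and only if $|\eval(\calH)| = 0$. Correctness is immediate from the observation in the previous paragraph, and the running time is polynomial since it is a composition of two polynomial-time steps.

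There is no real obstacle here: the only subtlety worth stating explicitly is that one must invoke the generating-set independence of the problem in order to be allowed to assume that $\Sigma$ is equipped with a total order (so that $\slex$, and hence Theorem~\ref{thm-SLP-for-shortlex}, applies), and that one must note that the shortlex reduction of $\eval(\calG)$ is the empty word precisely when $\eval(\calG) =_G 1_G$. Alternatively, in the last step one could instead use Plandowski's algorithm (Theorem~\ref{thm-plandowski}) to compare $\calH$ against the trivial straight-line program with empty output; either way the cost is polynomial.
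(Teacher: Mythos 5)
Your proof is correct and is essentially the paper's own argument: apply Theorem~\ref{thm-SLP-for-shortlex} to get a straight-line program for $\slex(\eval(\calG))$ and then test whether its output has length zero (equivalently, is the empty word). The extra remarks about generating-set independence and the Plandowski alternative are harmless but not needed.
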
 

\begin{proof}
Suppose that $\calG$ is the given compressed word.  
Note that $\eval(\calG) \in \Sigma^*$ represents $1_G$ if and only if $\slex(\eval(\calG)) = \emptyword$.  
This, in turn, happens if and only if $\slex(\eval(\calG))$ has length zero. 
By Theorem~\ref{thm-SLP-for-shortlex} we can compute in polynomial time a 
straight-line program $\calG'$ for $\slex(\eval(\calG))$ and by \cite[Proposition~3.9]{Loh14} we can compute the length of $\eval(\calG')$
in polynomial time. This concludes the proof. 
\end{proof}

\section{Further compressed decision problems}

\subsection{Compressed order problem} 
\label{sec-order}

Suppose that $G$ is a group. 
Suppose that $\Sigma$ is a finite, symmetric generating set for $G$. 
For any $g \in G$ we define the \emph{order} of $g$ to be the smallest positive integer $k$ so that $g^k = 1_G$.  
If there is no such $k$ we define the order to be infinity. 
We define the \emph{compressed order problem} as follows. 

\begin{description}
\item[Input] Straight-line program $\calG$ over $\Sigma$.
\item[Output] The order of the group element $\eval(\calG)$.
\end{description}

As a consequence of Corollary~\ref{coro-CWP-hyperbolic} we have the following result.

\begin{corollary} 
\label{coro-order}
Suppose that $G$ is a hyperbolic group.  
Then the compressed order problem for $G$ can be solved in polynomial time.
\end{corollary}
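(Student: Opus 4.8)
The plan is to reduce the compressed order problem to a bounded number of instances of the compressed word problem, which we have already solved in polynomial time via Corollary~\ref{coro-CWP-hyperbolic}.

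The key external input is the classical fact that a hyperbolic group $G$ has a bound $N = N(G, \Sigma)$ on the orders of its torsion elements: indeed $G$ has only finitely many conjugacy classes of elements of finite order (see, for instance,~\cite{Gro87} or~\cite{ABCFLMSS91}), and conjugate elements have equal order. Consequently, for $g \in G$ we have $\mathrm{order}(g) < \infty$ if and only if $g^k =_G 1_G$ for some $k$ with $1 \le k \le N$, and in that case $\mathrm{order}(g)$ is the least such $k$.

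Given this, the algorithm is as follows. Let $\calG$ be the input straight-line program, and set $w = \eval(\calG)$. For each $k$ with $1 \le k \le N$, use Proposition~\ref{prop-power} to construct, in time linear in $|\calG| + \log N$, a straight-line program $\calG_k$ with $\eval(\calG_k) = w^k$; then apply the polynomial-time algorithm of Corollary~\ref{coro-CWP-hyperbolic} to decide whether $\eval(\calG_k)$ represents $1_G$. If some $k$ in the range succeeds, output the smallest such $k$; otherwise output $\infty$. Correctness is immediate from the previous paragraph. Since $N$ depends only on $G$ and $\Sigma$ — and not on $\calG$ — this is a constant number of iterations, each taking time polynomial in $|\calG|$, so the whole procedure runs in polynomial time.

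There is essentially no serious obstacle here: the only nontrivial ingredient beyond Corollary~\ref{coro-CWP-hyperbolic} is the bounded-torsion fact, and the remaining bookkeeping is routine. (If one wanted an explicit value of $N$ one could extract it from the geometry, using that every torsion element is conjugate into $\Ball(r)$ for a suitable $r$ depending on $\delta$ together with the finiteness of $\Ball(r)$, but for the stated complexity result the mere existence of $N$ suffices.)
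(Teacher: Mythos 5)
Your proposal is correct and follows essentially the same route as the paper: invoke the bounded torsion of a hyperbolic group (the paper cites finitely many conjugacy classes of finite subgroups, you cite finitely many conjugacy classes of torsion elements — the same underlying fact), then for each $k$ up to the constant bound use Proposition~\ref{prop-power} to build a program for $\eval(\calG)^k$ and test it with Corollary~\ref{coro-CWP-hyperbolic}. No substantive difference.
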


\begin{proof}
Suppose that $\calG$ is the given compressed word.
Suppose that $g \in G$ has finite order. 
Suppose that $\delta$ is the hyperbolicity constant for $G$. 
Then the order of $g$ is at most $2\delta + 1$;
see~\cite{BG95}.

To compute the order of $\eval(\calG)$ it suffices to check whether $\eval(\calG)^k =_G 1_G$, for some integer $k$ between $1$ and $2\delta + 1$ (inclusive).
Proposition~\ref{prop-power} gives us the desired compressed word and Corollary~\ref{coro-CWP-hyperbolic} checks it, both in polynomial time. 
\end{proof}

\subsection{The compressed (simultaneous) conjugacy and compressed centraliser problems} 
\label{sec-conjugacy}

Suppose that $G$ is a group. 
Suppose that $\Sigma$ is a finite, symmetric generating set for $G$. 
For group elements $g, h \in G$ we have the standard abbreviation $g^h = h^{-1} g h$.
If $\calL = (g_1, \ldots, g_k)$ is a finite list of group elements then we write $\calL^h = (g_1^h, \ldots, g_k^h)$.
We extend these definitions to words over $\Sigma$ in the obvious way.

\subsection{The problems} \label{the problems}

The \emph{compressed conjugacy problem} for $G$ is the following.

\begin{description}
\item[Input] 
Straight-line programs $\calG$ and $\calH$ over $\Sigma$.
\item[Question] 
Do $\eval(\calG)$ and $\eval(\calH)$ represent conjugate elements in $G$?  
\end{description}

If $\calL$ is a list of straight-line programs over $\Sigma$, then we define $\eval(\calL)$ to be the corresponding list of evaluations.  
We now define the \emph{compressed simultaneous conjugacy problem} for $G$.
\begin{description}
\item[Input] 
Finite lists $\calL = (\calG_1,\ldots,\calG_k)$ and $\calM = (\calH_1,\ldots,\calH_k)$ of straight-line programs over $\Sigma$.
\item[Question] 
Are $\eval(\calL)$ and $\eval(\calM)$ conjugate lists in $G$? 
\end{description}

In the case when the answer to either of these questions is positive, we might also want to compute a straight-line program for an element that conjugates $\eval(\calG)$ to $\eval(\calH)$ or $\eval(\calL)$ to $\eval(\calM)$.  

The \emph{compressed centraliser problem} for $G$ is the following computation problem.

\begin{description}
\item[Input] 
A finite list $\calL = (\calG_1, \ldots, \calG_k)$ of straight-line programs over $\Sigma$.
\item[Output] 
A finite list $\calM = (\calH_1, \ldots, \calH_l)$ such $\eval(\calM)$ generates the intersection of the centralisers of the elements $\eval(\calL)$. 
\end{description}

Note that this intersection is in fact the centraliser of the subgroup generated by the elements $\eval(\calL)$.  
When the desired centraliser is not finitely generated, by convention the problem has no solution. 

\subsection{The proofs}

A linear-time algorithm for solving the conjugacy problem in a hyperbolic group $G$ is described in~\cite[Section~3]{EpsteinH06}.  
For the  (uncompressed) simultaneous conjugacy problem, a quadratic time algorithm for torsion-free hyperbolic groups was presented in \cite{BrHo05}.
This was generalised in~\cite{BuckleyHolt13} to linear-time algorithms for the uncompressed simultaneous conjugacy, and the centraliser, problems in all 
hyperbolic groups.
We will show that essentially the same algorithms can be used to solve the compressed (simultaneous) conjugacy problem and the compressed centraliser problem, in polynomial time. 

We deal with the compressed conjugacy problem in Section~\ref{sec-conjugacy-1}.
Building on that, and making the special assumption that one of the input elements has infinite order, we solve the compressed simultaneous conjugacy problem and the compressed centraliser problem in Section~\ref{sec-infinite-order}.
Finally, we deal with the case that all input group elements have finite order in Section~\ref{sec-finite-order}.

\subsubsection{Compressed conjugacy problem} 
\label{sec-conjugacy-1}

We now have the following. 

\begin{theorem}
\label{thm-comp-conj}
Let $G$ be a hyperbolic group. 
Then the compressed conjugacy problem in $G$ is polynomial time. 
\end{theorem}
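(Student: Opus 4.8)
The plan is to follow the linear-time conjugacy algorithm of Epstein and Holt~\cite{EpsteinH06}, replacing each of its word operations by its compressed counterpart. The available tools are Theorem~\ref{thm-SLP-for-shortlex} (hence Corollaries~\ref{coro-CWP-hyperbolic} and~\ref{coro-order}), Plandowski's equality test (Theorem~\ref{thm-plandowski}), and the compressed rotation test of Corollary~\ref{coro-cyclic-conj}. Let $\calG$ and $\calH$ have outputs $g$ and $h$; first pass to straight-line programs for $\slex(g)$ and $\slex(h)$ via Theorem~\ref{thm-SLP-for-shortlex}. If one of $g,h$ is trivial the answer is immediate, so assume not. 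Using Corollary~\ref{coro-order}, decide whether $g$ and $h$ have finite order; if exactly one of them does, they are not conjugate. Two cases remain.

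\textbf{Both of infinite order.} By~\cite{EpsteinH06} there is a constant $c=c(G,\Sigma)$ such that every infinite-order element is conjugate to a \emph{cyclically reduced} element (one of minimal length in its conjugacy class), any two cyclically reduced conjugates of a given element have equal length and are rotations of one another as words (once that length exceeds $c$), and two cyclically reduced infinite-order elements of length $>c$ are conjugate if and only if the shortlex form of one is a rotation of the shortlex form of the other. So the algorithm computes, in polynomial time, a straight-line program for a cyclically reduced conjugate $\hat g$ of $g$ (and one for a conjugating element), likewise $\hat h$, and then applies Corollary~\ref{coro-cyclic-conj} to $\slex(\hat g)$ and $\slex(\hat h)$ (if the two lengths differ, or if that test reports no rotation, the elements are not conjugate); the short cases $|\hat g|,|\hat h|\le c$ are disposed of by running the uncompressed algorithm of~\cite{EpsteinH06} directly on these bounded-length words. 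The only non-routine part is producing $\hat g$: Epstein--Holt reach it from $\slex(g)$ by repeatedly conjugating by a bounded-length prefix or suffix and re-reducing, which naively uses $\Theta(|\slex(g)|)$ rounds and hence blows up. One instead observes that conjugating $\slex(g)$ by a prefix $\slex(g)[:i]$ produces the rotation $\slex(g)[i:]\cdot\slex(g)[:i]$, whose shortlex length can be evaluated for any single $i$ in polynomial time (cut operations, concatenation, Theorem~\ref{thm-SLP-for-shortlex}); using the coarse control of this length as a function of $i$ coming from the quadrilateral and thin-triangle estimates of Lemmas~\ref{lem-quad} and~\ref{lem-thin-triangle}, one locates the drop in length by binary search and iterates $O(\log|\slex(g)|)$ times.

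\textbf{Both of finite order.} By Corollary~\ref{coro-order} the orders are at most a constant $c'=c'(G)$. Since $G$ has finitely many conjugacy classes of finite subgroups~\cite{HRR2017}, it has finitely many conjugacy classes of torsion elements, each with a representative in $\Ball(K)$ for a constant $K$. As above, the task is to compute in polynomial time a straight-line program for a conjugate $g_0$ of $g$ with $|g_0|\le K$ (and one for a conjugator), again by organizing the Epstein--Holt shortening of the compressed word $\slex(g)$ as a binary search over rotations rather than as single-generator moves; similarly for $h_0$. Once $g_0$ and $h_0$ are obtained they are explicit bounded-length words, and $g\sim h$ if and only if $g_0\sim h_0$, which is decided outright (for instance by~\cite{EpsteinH06}). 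In every case a conjugating program can be assembled from the pieces produced along the way. I expect this compressed ``shortening by binary search'' step, common to both cases, to be the main obstacle: the content is showing that Epstein--Holt's iterative length reduction, which is naturally driven by single-generator conjugations, can be replaced by a polynomial number of cut/concatenate/shortlex-reduce operations on straight-line programs, with the geometry guaranteeing both that each binary search is valid and that only logarithmically many rounds are needed.
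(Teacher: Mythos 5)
Your route deviates from the paper's, and it has genuine gaps. The central one is the conjugacy criterion you use in the infinite-order case. It is not true in a general hyperbolic group that two cyclically reduced (minimal-length) conjugate elements of large length have shortlex forms that are literal rotations of one another. The correct statement involves an extra conjugator of bounded length, and even then it is a statement about group elements, not about words: if $w$ is a rotation of $\slex(\hat h)$, then $\slex(w)$ is in general \emph{not} a rotation of $\slex(\hat h)$, since rotations of a shortlex word are merely geodesic. So the test you propose (Corollary~\ref{coro-cyclic-conj} applied to $\slex(\hat g)$ and $\slex(\hat h)$) can report ``not conjugate'' for conjugate elements. This is exactly the difficulty that the Epstein--Holt algorithm, and the paper's compressed version of it, are built to avoid: one conjugates a bounded power $u^m$ by a bounded element to obtain a \emph{shortlex straight} word $z$ (checkable on compressed words via Proposition~\ref{prop-fsa-2}), for which one really does get a literal rotation statement --- $v^m$ is conjugate to $z$ iff $\slex(h v^m h^{-1})$ is a rotation of $z$ for some $h\in\Ball(6\delta)$. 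And since $v^m\sim u^m$ is only necessary, not sufficient, one must then work inside the centralizer $C_G(z)$, using the root $y$ of $z$, the candidate list $C_z$, and a final test over exponents $n\le (J-1)!$. Your proposal omits the passage to powers, the shortlex-straight reduction, and the centralizer analysis entirely, and the criterion substituted for them is false as stated.

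The second gap is the ``shortening by binary search'' step, which both of your cases rely on. You give no argument that the function $i\mapsto|\slex(\slex(g)[i:]\cdot\slex(g)[:i])|$ has any structure (monotonicity, unimodality, or a certified drop) that binary search could exploit; Lemmas~\ref{lem-quad} and~\ref{lem-thin-triangle} do not provide this, and nothing shows that $O(\log|\slex(g)|)$ rounds reach a cyclically reduced (or, in the torsion case, bounded-length) conjugate. The paper does not need such a procedure: it performs a \emph{single} rotation by half the word (Step~\ref{step-rotate}, done with cut operators and Theorem~\ref{thm-SLP-for-shortlex}), handles short words by a precomputed lookup table, and for long words invokes the Epstein--Holt fact that some $g\in\Ball(4\delta)$ and bounded $m$ make $\slex(g^{-1}u^m g)$ shortlex straight. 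In particular, there is no separate finite-order branch and no compressed cyclic-reduction loop to justify. To repair your argument you would essentially have to reconstruct Steps~\ref{step-straight}--\ref{step-check} of the paper's proof.
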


\begin{proof}
The input consists of two straight-line programs $\calG$ and $\calH$; we wish to test if $u = \eval(\calG)$ and
$v = \eval(\calH)$ are conjugate.  
To do this we essentially use the conjugacy algorithm from~\cite[Theorem~1.1]{EpsteinH06}, applied to the words $u$ and $v$. 
We will describe our modification of their algorithm, step-by-step, in the following.

Our description of each step consists of two parts.  
The first describes operations relating to the words $u$ and $v$; the second explains how we effect these operations in polynomial time using only the straight-line programs $\calG$ and $\calH$.  
All assertions that we make in the uncompressed setting are justified in~\cite{EpsteinH06}.
All corresponding assertions are then justified again, in the compressed setting, using the work in previous sections of this paper. 

Let $\delta$ be a positive integer that serves as a thinness constant for the Cayley graph $\Gamma = \Gamma(G, \Sigma)$; see Section~\ref{sec-hyp}.
We define constants $L = 34\delta + 2$ and $K = 17(2L + 1)/7$; see \cite[pages~298]{EpsteinH06}.

A word $w \in \Sigma^*$ is said to be \emph{shortlex straight} if, for all non-negative powers $k$, the word $w^k$ is shortlex reduced.
Applying Lemma~\ref{lem-shortlex-regular} and Proposition~\ref{prop-fsa-2} we can determine, in polynomial time, if a given compressed word $\eval(\calG)$ is shortlex straight.

In the preprocessing stage, we make a look-up table of all pairs of shortlex reduced words of length at most $K$ that are conjugate in $G$.

\begin{step}
\label{step-slex}
We replace $u$ and $v$ by $\slex(u)$ and $\slex(v)$.

By Theorem~\ref{thm-SLP-for-shortlex} we can replace, in polynomial time, the programs $\calG$ and $\calH$ by straight-line programs for $\slex(\eval(\calG))$ and $\slex(\eval(\calH))$, respectively. 
\end{step}

\begin{step}
\label{step-rotate}
For a word $w$, we define $w_C = w_R w_L$, where $w = w_L w_R$ with $|w_L| \leq |w_R| \leq |w_L| + 1$.
Replace $u$ by $\slex(u_C)$ and $v$ by $\slex(v_C)$.

Using cut operators and Theorem~\ref{thm-SLP-for-shortlex} we can make the corresponding substitutions on $\calG$ and $\calH$.
\end{step}

\begin{step}
\label{step-short}
If $|u|, |v| \leq K$ then use the look-up table to test for conjugacy of $u$ and $v$. 
Otherwise, at least one of the words, $u$ say, satisfies $|u| \geq K > 2L + 1$. 
If $|v| < 2L + 1$ then $u$ and $v$ are not conjugate~\cite[Section~3.1]{EpsteinH06}, and we return false. 
We assume from now on that $|u|, |v| \geq 2L + 1$. 

For the compressed conjugacy problem, if $|\eval(\calG)|, |\eval(\calH)| \leq K$ then we can compute $\eval(\calG)$ and $\eval(\calH)$ explicitly.
We then proceed as in the uncompressed setting.
\end{step}

\begin{step}
\label{step-straight}
There exists a group element $g \in \Ball(4\delta)$ and a positive integer $m$, of size at most $|\Ball(4 \delta)|^2$, such that the shortlex reduction of $g^{-1} u^m g$ is shortlex straight~\cite[Section~3.2]{EpsteinH06}. 
To find such, for every pair $(g, m)$ of at most those sizes, we replace $u$ by $\slex(g^{-1} u g)$ and test $z = \slex(u^m)$ to see if it is shortlex straight. 

Using Proposition~\ref{prop-fsa-2}, we can perform the corresponding operations with $\calG$.  Thus we find $g$ and $m$ and also find a straight-line program $\calG'$ with $\eval(\calG') = z$.
\end{step}

\begin{step}
\label{step-power}
We now test for the following necessary (but not sufficient) property for
the conjugacy of $u$ and $v$: is $v^m$ conjugate to $z$?  
We decide this as follows.  
For all $h \in \Ball(6\delta)$, 
we compute $v_h = \slex(h v^m h^{-1})$, and then test whether $v_h$ is a rotation of $z$.  
If this fails for all $h$, then $v^m$ and $z =_G u^m$ are not conjugate~\cite[Section~3.3]{EpsteinH06}.  
But then, $u$ and $v$ are not conjugate,  so we may stop and return false. 

Otherwise, we find $h$ and $v_h$ with this property.
Let $z_1$ be a prefix of $z$ such that $z =_G z_1 h v^m h^{-1} z_1^{-1}$.  
We replace $v$ by $\slex(z_1 h v h^{-1} z_1^{-1})$. 
Now we have $v^m =_G u^m =_G z$. 
From this we get that every $g \in G$ with $g^{-1} u g =_G v$ belongs to the centraliser $C_G(z)$ of $z$ in $G$.
In particular, $u$ and $v$ are conjugate in $G$ if and only if they are conjugate in  $C_G(z)$.

Using Corollary~\ref{coro-cyclic-conj} we can do the corresponding calculations with $\calH$ and $\calG'$. 
Checking whether $v_h$ is a rotation of $z$ can be accomplished in polynomial time by the first statement of Corollary~\ref{coro-cyclic-conj}; the second statement allows us to compute in polynomial time a straight-line program for 
$z_1$.
\end{step}

\begin{step}
\label{step-root}
Find the shortest prefix $y$ of $z$ that is a \emph{root} of $z$: that is, there is an $\ell \geq 1$ so that $z = y^\ell$.  
We do that by finding the second occurrence of the substring $z$ in the word $z^2$.

To find the root of $\eval(\calG')$, we compute a straight-line program for  $\eval(\calG')^2$ and appeal to Theorem~\ref{thm-pattern-matching}.  
We then build a straight-line program $\calG''$ with $\eval(\calG'') = y$ using cut operators and Theorem~\ref{thm-hagenah}.
\end{step}

\begin{step}
\label{step-candidates}
For each $h \in \Ball(2\delta)$, compute $\slex(h z h^{-1})$ and test whether it is a rotation of $z$.
If so, find a prefix $z_h$ of $z$ with $h z h^{-1} =_G z_h^{-1}zz_h$, and compute and store $\slex(z_h \cdot h)$ (which lies in $C_G(z)$) in a list $C_z$. 
Then $|C_z| \leq J = |\Ball(2\delta)|$.

Corollary~\ref{coro-cyclic-conj} allows us to do the corresponding calculations with $\calG'$.
We obtain a list of straight-line programs that evaluate to the words in the list $C_z$.
\end{step}

\begin{step}
\label{step-check}
For each $n$ with $0 \leq n \leq (J - 1)!$ and for each $z' \in C_z$, let $g = y^n z'$.
Test if $u =_G g v g^{-1}$. 
If so, then return true (and a conjugating element). 
If not, then return false because $u$ and $v$ are not conjugate~\cite[Section~3.4]{EpsteinH06}.

We can perform corresponding operations on the straight-line programs.
\end{step}

This concludes our description of a polynomial-time algorithm for the compressed conjugacy problem.  The correctness proof is identical to that in~\cite[Section~3]{EpsteinH06}
\end{proof}

\subsubsection{Compressed simultaneous conjugacy and centralisers: the infinite order case}
\label{sec-infinite-order}

We now turn to the following. 

\begin{theorem}
\label{thm-compcp}
Let $G$ be a hyperbolic group. 
Then the compressed simultaneous conjugacy problem for $G$ can be solved in polynomial time.
Moreover, if the two input lists are conjugate, then we can compute a straight-line program for a conjugating element in polynomial time. 
\end{theorem}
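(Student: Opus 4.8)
The plan is to transcribe the linear-time algorithm of Buckley and Holt~\cite{BuckleyHolt13} for the uncompressed simultaneous conjugacy problem into the compressed setting, in the same spirit as the proof of Theorem~\ref{thm-comp-conj} transcribed the conjugacy algorithm of~\cite{EpsteinH06}: each primitive operation on words is replaced by its polynomial-time compressed counterpart from Sections~\ref{sec-SLP}--\ref{sec-conjugacy-1} --- shortlex reduction and word equality by Theorem~\ref{thm-SLP-for-shortlex} and Corollary~\ref{coro-CWP-hyperbolic}, rotations and roots by Corollary~\ref{coro-cyclic-conj} and Theorem~\ref{thm-pattern-matching}, powers by Proposition~\ref{prop-power}, and conjugacy of individual pairs (together with a conjugating element) by Theorem~\ref{thm-comp-conj}. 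Correctness of every assertion about the underlying words is quoted from~\cite{BuckleyHolt13}; only polynomial-time executability on the programs needs checking. First I would use Corollary~\ref{coro-order} to compute the orders of all $\eval(\calG_i)$ and $\eval(\calH_i)$; since conjugate elements have equal order, we return false if the two order-multisets differ, and otherwise split into the case where some element has infinite order (Section~\ref{sec-infinite-order}) and the case where every input element is torsion (Section~\ref{sec-finite-order}).

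Suppose $g_1 := \eval(\calG_1)$ has infinite order. Applying Theorem~\ref{thm-comp-conj} to $(\calG_1,\calH_1)$ either shows they are not conjugate (return false) or yields a straight-line program for a conjugator $c$; we then replace $\calM$ by $\calM^{c^{-1}}$ (append tether and cut operators to the component programs, apply Theorem~\ref{thm-SLP-for-shortlex}), after which the first components agree and every simultaneous conjugator $\gamma$ of the new instance lies in $C := C_G(g_1)$ and yields the conjugator $\gamma c$ of the original. Running the machinery of Steps~\ref{step-straight}--\ref{step-candidates} from the proof of Theorem~\ref{thm-comp-conj} on $\calG_1$ produces straight-line programs for a primitive root $y$ of a suitable power of $g_1$ and for finitely many short words that, with $\langle y\rangle$, generate $C$; as $C$ is virtually cyclic, from these one obtains a description $C = \bigcup_\beta \langle y\rangle\,\beta$ as a bounded union of cosets of $\langle y\rangle$ with explicit short representatives $\beta$. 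It then remains, for each $\beta$, to decide whether there is $n\in\ZZ$ with $(y^n\beta)^{-1} g_i(y^n\beta) =_G \eval(\calH_i)$ for all $i$; equivalently, writing $h_i' = \beta\,\eval(\calH_i)\,\beta^{-1}$, whether the system $y^{-n} g_i y^n =_G h_i'$ for $1\le i\le k$ has a common solution $n$.

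The heart of the matter is to solve one equation $y^{-n} g y^n =_G h$ for the integer $n$ and to return its solution set usably; note that, unlike in the uncompressed setting of~\cite{BuckleyHolt13}, the exponent $n$ may be exponential, so it must be handled in binary rather than by enumeration. Testing $y^{-d}gy^d =_G g$ with Corollary~\ref{coro-CWP-hyperbolic} for $d$ up to a constant bound (using that $\langle y\rangle$, being generated by a primitive element, has bounded index in its elementary closure in $G$) decides whether $g$ commutes with some nonzero power $y^{d_0}$ of $y$, with $d_0$ bounded in terms of $G$ alone. If so, $n\mapsto y^{-n}gy^n$ is periodic of period dividing $d_0$, so its solution set is a union of residue classes modulo $d_0$, pinned down by at most $d_0$ compressed word problem queries. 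If $g$ commutes with no nonzero power of $y$, then by stability of the quasigeodesics $y^{-n}\cdot g\cdot y^n$ the length $|\slex(y^{-n}g y^n)|$ grows at least linearly as $|n|\to\infty$ and is eventually strictly monotone on each tail; hence any solution satisfies $|n|\le$ a bound linear in $|\slex(h)|$ --- at most singly exponential in, but of binary length polynomial in, the input --- and the $O(1)$ candidate $n$ are located by a binary search each of whose steps evaluates a length via Theorem~\ref{thm-SLP-for-shortlex} and is verified with Corollary~\ref{coro-CWP-hyperbolic}. Intersecting these per-$i$ solution sets is elementary arithmetic on arithmetic progressions, and ranging over the boundedly many $\beta$ decides the instance; when a pair $(n,\beta)$ is found, Proposition~\ref{prop-power} gives a straight-line program for $y^n$ and hence, together with $\beta$ and $c$, for a simultaneous conjugator (using Theorem~\ref{thm-hagenah} to turn the resulting cut program into a straight-line program).

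The all-torsion case follows Section~\ref{sec-finite-order}: since a hyperbolic group has only finitely many conjugacy classes of finite subgroups, the orders of the $g_i$ are bounded by a constant, and~\cite{BuckleyHolt13} reduces the problem to boundedly many instances of tasks already handled above (conjugacy in $G$, and computations inside the relevant finite or virtually cyclic overgroups), each done in the compressed setting with the same tools. I expect the main obstacle to be exactly the exponent bookkeeping of the infinite-order case: never writing out the potentially exponential power $y^n$ explicitly, but instead confining it by length estimates, locating it by binary search, and intersecting the per-$i$ solution sets without enumeration. Everything else is a routine, if lengthy, transcription of~\cite{BuckleyHolt13} into the language of straight-line programs.
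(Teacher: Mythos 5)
Your overall architecture is the same as the paper's: compute orders (Corollary~\ref{coro-order}), split into the infinite-order and all-torsion cases, in the infinite-order case run the compressed conjugacy machinery of Theorem~\ref{thm-comp-conj} on the first pair to reduce to conjugation inside the virtually cyclic centralizer $C_G(z) = \{y^n z' \st z' \in C_z\}$, and for each of the boundedly many coset representatives solve a system of equations $u_i^{y^n} =_G v_i$ whose solution sets (residue classes or single integers, handled in binary) are intersected by modular arithmetic. The genuine gap is in your treatment of the single equation $u^{y^n} =_G v$, which is exactly the point where the paper leans on the algorithm in the proof of \cite[Proposition~24]{BuckleyHolt13}: that algorithm determines which of the three cases holds and computes the period $t_i \le |\Ball(2\delta)|$ or the unique exponent $r_i$ (with $|r_i|$ linear in $|u_i|+|v_i|$) using only a \emph{constant} number of power computations, each of which compresses via Proposition~\ref{prop-power}. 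Your premise that \cite{BuckleyHolt13} finds the exponent ``by enumeration'', so that a new method is needed in the compressed setting, is mistaken, and the replacement you propose does not work as stated. In the non-commuting case the solution $n$ is indeed unique, but the function $n \mapsto |\slex(y^{-n} u y^{n})|$ is only \emph{eventually} (coarsely) monotone on each tail; the threshold beyond which monotonicity holds, like the bound on $|n|$, is linear in $|u|$, $|v|$, $|y|$, hence exponential in the input size. A solution lying in the non-monotone middle region can neither be located by binary search on lengths nor enumerated, and inputs of the form $v = y^{-n} u y^{n}$ with moderate $n$ land there. Your periodic case also rests on the unproved assertion that $\langle y\rangle$ has bounded index in its elementary closure; what is actually needed (and true, e.g.\ by a pigeonhole/cocycle argument on the finite torsion subgroup of $E(y)$, or simply by quoting \cite[Proposition~24]{BuckleyHolt13}(i)) is that an element commuting with \emph{some} nonzero power of $y$ commutes with $y^{d}$ for some $d$ bounded by a constant of $G$, but as written this step is not justified.

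A secondary, lesser point: the all-torsion case is dismissed as a ``routine transcription''. The paper does spend real effort there (the bounded prefix sublists of length $\min\{|\Ball(2\delta)|^4+1,k\}$, the function \textsc{ShortenWords}, the precomputed look-up table, and \cite[Corollary~30]{BuckleyHolt13} when $k$ exceeds that bound); your sketch names the right ingredients, so this is thinness rather than an error, but the exponent-finding step above is a real gap that you would need to close --- most easily by doing what the paper does and importing the algorithm of \cite[Proposition~24]{BuckleyHolt13} wholesale, compressed via Proposition~\ref{prop-power}.
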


\begin{theorem}
\label{thm-comp-central}
Let $G$ be a hyperbolic group. 
Then the compressed centraliser problem for $G$ can be solved in polynomial time.
\end{theorem}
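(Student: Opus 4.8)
The plan is to follow the linear-time uncompressed centralizer algorithm of \cite{BuckleyHolt13}, replacing each manipulation of an explicit word by the corresponding operation on straight-line programs, cut programs, or tether-cut programs developed in Sections~\ref{sec-SLP} and~\ref{sec-conjugacy-1}. As in the proof of Theorem~\ref{thm-comp-conj}, every claim about the underlying group elements is justified by \cite{EpsteinH06, BuckleyHolt13}, and it remains only to re-certify each step in the compressed setting. Write $g_i = \eval(\calG_i)$ and $H = \subgp{g_1, \dots, g_k}$; recall that the required output is a list $\calM$ with $\eval(\calM)$ generating $C_G(H) = \bigcap_i C_G(g_i)$. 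First I would compute the order of each $g_i$ via Corollary~\ref{coro-order}, which classifies the input in polynomial time into the case where some $g_i$ has infinite order and the case where all have finite order. We also record, for use below, that for hyperbolic $G$ the group $C_G(H)$ is always finitely generated: it is a finite intersection of centralizers of single elements, each of which is quasiconvex in $G$, and intersections of quasiconvex subgroups of a hyperbolic group are quasiconvex; see \cite{BuckleyHolt13}. So the ``no solution'' convention never applies here.

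\emph{Infinite-order case.} Suppose, after reindexing, that $g_1$ has infinite order, so that $C_G(H) \le C_G(g_1)$ and this last group is virtually cyclic. Running the normalization steps of the proof of Theorem~\ref{thm-comp-conj} (Steps~\ref{step-slex}--\ref{step-candidates}) on $\calG_1$, while conjugating the whole list $\calL$ by the same group elements at each stage — each such conjugator is short or a prefix of a polynomially long word, hence an SLP operation, and conjugating $\calL$ replaces $C_G(H)$ by a conjugate — produces a straight-line program for a shortlex straight word $z$, its shortest root $y$ (Step~\ref{step-root}), and the bounded list $C_z$ (Step~\ref{step-candidates}). From $\{y\} \cup C_z$ we obtain, exactly as in \cite{BuckleyHolt13}, an explicit description of the virtually cyclic group $C_G(z)$ as a bounded union of cosets $\subgp{y} z'$; and $C_G(H)$ is the subgroup of $C_G(z)$ consisting of those $h$ that in addition centralize the conjugated copies of $g_1, \dots, g_k$. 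Parametrizing $h = y^n z'$ with $z'$ ranging over the bounded coset set and $n \in \ZZ$, the set of admissible $n$ for each fixed $z'$ is $\ZZ$, a single residue class modulo a bounded modulus, a single value, or empty: pushing $y^n$ past the other letters using $z = y^\ell$ reduces each commutation requirement to finitely many triviality tests, where Proposition~\ref{prop-power} builds a straight-line program for $y^n$ from $n$ in binary, concatenation and Remark~\ref{rem-inverse} assemble the element, and Corollary~\ref{coro-CWP-hyperbolic} decides it. Intersecting the resulting arithmetic progressions over $z'$ and over the list, we extract a generating set for $C_G(H)$ of size $O(|C_z|)$, each generator a straight-line program for an element $y^n z'$ with $n$ in binary, and conjugate back to obtain $\calM$.

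\emph{Finite-order case.} When every $g_i$ has finite order we compress, step by step, the corresponding argument of \cite{BuckleyHolt13}, exactly as Theorem~\ref{thm-comp-conj} compressed \cite{EpsteinH06}. The relevant structural facts are that a torsion element of $G$ is conjugate to a word of bounded length, and that $G$ has only finitely many conjugacy classes of finite subgroups. Using the already-compressed Steps~\ref{step-slex}--\ref{step-rotate} we produce, for each $i$, such a short conjugate of $g_i$ together with a straight-line program for a conjugating element; the algorithm of \cite{BuckleyHolt13} then either locates an element of $H$ of infinite order — whose straight-line program we append to $\calL$, reducing to the previous case, since adjoining an element of $H$ changes neither $H$ nor $C_G(H)$ — or certifies that $H$ is finite, of order at most $c(G)$, and conjugates it into one of the finitely many fixed finite subgroups, after which $C_G(H)$ is obtained by a bounded computation and a single conjugation back.

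\emph{Main obstacle.} The only genuinely new ingredient, beyond translating \cite{EpsteinH06, BuckleyHolt13} into the compressed language already set up in Section~\ref{sec-conjugacy-1}, is handling the binary-encoded exponents inside the virtually cyclic group $C_G(z)$ in the infinite-order case: one must decide and describe, \emph{without} enumerating $n$, the set of $n$ for which $y^n z'$ centralizes a given list element. This rests on the structural observation above — that the set is $\ZZ$, a single residue class modulo a bounded modulus, a single value, or empty — with the alternatives separated by polynomially many calls to Proposition~\ref{prop-power} and Corollary~\ref{coro-CWP-hyperbolic}, and with the bound on the modulus supplied by the bounded index of $\subgp{y}$ in $C_G(z)$, just as in the coset bookkeeping of \cite{BuckleyHolt13}. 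Everything else is routine.
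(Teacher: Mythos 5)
Your infinite-order case is essentially the paper's argument: normalize $\calG_1$ by the compressed Steps~\ref{step-slex}--\ref{step-candidates}, conjugate the whole list along the way, parametrize $C_G(z)$ as $\{y^n z' : n \in \ZZ,\ z' \in C_z\}$, and determine, for each $z'$ and each list entry, the admissible set of exponents $n$. The paper pins the trichotomy and, more importantly, the \emph{algorithm} for computing the data $r_i, t_i$ on \cite[Proposition~24]{BuckleyHolt13}, whose proof reduces everything to a constant number of powers $u_i^n$, $v_i^n$, $y^n$ with exponents that are only linearly bounded in the (exponentially long) word lengths but polynomially long in binary, hence handled by Proposition~\ref{prop-power}. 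Your "pushing $y^n$ past the other letters" sketch does not by itself explain how to find the possibly exponentially large unique exponent without enumeration, but since you defer to the bookkeeping of \cite{BuckleyHolt13} this part is the same route as the paper, just stated less precisely.

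The genuine gap is in your finite-order case. First, a single application of Steps~\ref{step-slex}--\ref{step-rotate} does \emph{not} produce a short conjugate of a torsion element: the shortlex representative of a torsion element can have length exponential in the size of its straight-line program, and iterating "rotate and reduce" until the length becomes bounded could take exponentially many compressed operations. The paper avoids this by running \textsc{ShortenWords} of \cite{BuckleyHolt13} on the prefix sublist of constant length $n = \min\{|\Ball(2\delta)|^4+1, k\}$; the point is that this routine makes only a constant number (at most $n^2$) of shortlex reductions of bounded concatenations, so it transfers to straight-line programs in polynomial time. Second, even granting short conjugates, your conjugators differ from one $g_i$ to the next, whereas the centralizer computation requires a \emph{common} conjugation of the whole list; this simultaneity is exactly what \textsc{ShortenWords} supplies and your plan omits. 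Third, the dichotomy you attribute to \cite{BuckleyHolt13} — "find an infinite-order element of $H$, or certify that $H$ is finite and conjugate it into one of finitely many fixed finite subgroups" — is not what that algorithm delivers: its second outcome is a common conjugation making the sublist bounded-length, after which $H$ may still be infinite; the centralizer is then read off from precomputed data via \cite[Proposition~2.3]{GerstenShort91} when $k = n$, and when $k > n$ one uses \cite[Corollary~30]{BuckleyHolt13}, which asserts finiteness of the \emph{centralizer} of the shortened sublist (not of $H$), and checks its boundedly many elements against the remaining list entries. You give no polynomial-time procedure, from compressed input, either for certifying finiteness of $H$ or for producing the conjugator into a fixed finite subgroup, so this branch of your argument does not go through as written.
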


The input now consists of two lists $\calL = (\calG_1,\ldots,\calG_k)$ and $\calM = (\calH_1,\ldots,\calH_k)$ of straight-line programs over the alphabet $\Sigma$.
For the compressed centraliser problem we assume that $\calL = \calM$.
For all $i$ we let $u_i = \eval(\calG_i)$ and $v_i = \eval(\calH_i)$. 

By Corollary~\ref{coro-order} we can check in polynomial time whether some $u_i$ has infinite order.
Following~\cite[Section~3]{BuckleyHolt13} we begin by assuming that this is indeed the case. 
Reordering the lists in the same way, as needed, we may assume that $u_1$ has infinite order. 
If $v_1$ does not have infinite order we are done. 

The conjugacy testing algorithm proceeds as follows.  We first repeatedly replace the elements in $\calL$ by conjugates, using a common conjugating element.  
This culminates in a check for a conjugating element which must lie in an explicit finite set. 
In each replacement, straight-line programs are known for the conjugating element.  
By keeping track of these, we can find (if the lists are conjugate) an overall conjugating element for the original input.   
We omit further details regarding this overall conjugating element.

We proceed by carrying out the eight steps of the algorithm of Section~\ref{sec-conjugacy-1} as applied to $u_1$ and $v_1$.  
If they are not conjugate we are done. 
Suppose that they are conjugate.  
In this case we record the programs for the words $z$ and $y$ produced by Steps~\ref{step-power} and~~\ref{step-root}.  We also record the list (of straight-line programs) $C_z$ given in Step~\ref{step-candidates}.
The overall algorithm also gives us a straight-line program for an element $g \in G$ with $u_1^g =_G v_1$.
The algorithm also replaced $u_1$ and $v_1$ by conjugates in some of the steps; we make the corresponding replacements to the other elements of $\calL$ and $\calM$. 
By replacing each $u_i$ by its conjugate under $g$, we may now assume that $u_1 = v_1$.

Thus we have reduced the problem to the following.  
Assuming that $u_1 = v_1$ and that $u_1$ has infinite order, we must decide if there is $g \in C_G(u_1)$ with $u_i^g =_G v_i$ for $2 \leq i \leq k$.   We are also given $z$; thus $u_1^m =_G v_1^m =_G z$ and $z$ is shortlex straight element $z$ and $m \geq 1$. 
We are also given $y$ with $z = y^\ell$ and for maximal $\ell \geq 1$. 

In~\cite[Section~3.4]{EpsteinH06} it is shown that all elements $g \in C_G(z)$ have the form $g =_G y^n z'$, for some $n \in \ZZ$ and $z' \in C_z$, where $C_z$ is the list given above. 
So the same applies to any $g \in C_G(u_1) \subseteq C_G(z)$.

We now try each $z' \in C_z$ in turn.  Replacing each $v_i$ by $z' v_i (z')^{-1}$, the problem reduces to the following:
is there some $n \in \ZZ$ such that $u_i^{y^n} =_G v_i$ for $1 \leq i \leq k$?

To solve this problem, we apply~\cite[Proposition~24]{BuckleyHolt13} to each pair $u_i, v_i$ in turn. 
For each $i$, there are three possibilities.
\begin{enumerate}
\item[(i)] 
there exist $0 \leq r_i < t_i \leq |\Ball(2\delta)|$ such that $u_i^{y^j} =_G v_i$ if and only if $j \equiv r_i \bmod t_i$;
\item[(ii)] 
there is a unique $r_i \in \ZZ$ with $u_i^{y^{r_i}} =_G v_i$, where $|r_i|$ is bounded by a linear function of $|u_i|$ and $|v_i|$; or
\item[(iii)] 
there is no $r_i \in \ZZ$ with $u_i^{y^{r_i}} =_G v_i$.
\end{enumerate}
The proof of~\cite[Proposition~24]{BuckleyHolt13} provides an algorithm for determining which case applies, and for finding $r_i, t_i$ in cases (i) and (ii).
This involves calculating a number of powers $u_i^n$, $v_i^n$, and $y^n$ for integers $n$ such that $|n|$ is bounded by a linear function of $|u_i|$ and $|v_i|$, and where the number of powers that need to be calculated is bounded by a constant. 
So we can perform these calculations in polynomial time with straight-line programs by Proposition~\ref{prop-power}.

After performing this calculation for each $i$ with $1 \leq i \leq k$, the conjugacy problem for the lists reduces to solving some modular linear equations involving the integers $r_i$ and $t_i$, as described in~\cite[Section~3.4]{BuckleyHolt13}. 
Since the $r_i$ and $t_i$ in case (i) are bounded  by a constant and, for $r_i$ in case (ii), $\log |r_i|$ is bounded by a linear function of the size of the straight-line programs representing $u_i$ and $v_i$, these equations can be solved in polynomial time using standard arithmetical operations on the binary representations of $r_i$ and $t_i$.  This completes our discussion of the compressed simultaneous conjugacy problem in the case where there is a list element of infinite order. 

For the compressed centraliser problem, we are in the same situation but with $v_i = u_i$ for all $i$.
We perform the same calculations as above, but we do them for every $z' \in C_z$.  
If there are solutions, then we find them by solving modular equations.  
The set of solutions we find now generates the centraliser.  
This completes our discussion of the compressed centraliser problem in the case where there is a list element of infinite order.

\subsubsection{Compressed simultaneous conjugacy and centralisers: the finite order case}
\label{sec-finite-order}

Here we continue the proofs of Theorems~\ref{thm-compcp} and~\ref{thm-comp-central}.  
We now consider the case where all of the $u_i$ (in the list $\eval(\calL)$) have finite order.  
We now follow~\cite[Section~4]{BuckleyHolt13}. 
No new complications arise when applying those methods to lists of straight-line programs. 
Indeed, some steps become easier because we are only interested in achieving polynomial, rather than linear, time.

We follow the steps of the algorithm described in~\cite[Section~4.5]{BuckleyHolt13}. 
We deal with the conjugacy and centraliser problems together; the two lists are taken to be equal for the centraliser calculation. 
At this stage we have already verified that all of the $u_i$ and $v_i$ have finite order.  
Furthermore all of the words are shortlex reduced. 
By deleting programs, we can assume that the list $u_1, \ldots, u_k$, and likewise the list $v_1, \ldots, v_k$, has no duplicates.
Thus the $u_i$ represent distinct group elements, as do the $v_i$.

Let $n = \min\{ |\Ball(2\delta)|^4 + 1, k \}$.
We consider the prefix sublists $\eval(\calL') = (u_1, \ldots, u_n)$ and $\eval(\calM') = (v_1, \ldots, v_n)$.
We apply the function \textsc{ShortenWords} from~\cite[Section~4.2]{BuckleyHolt13} to the lists $\calL'$ and $\calM'$. 
This function applies $\slex$ to a number of words; this number is bounded above by $n^2$.  
Each word is a concatenation (of length at most $n + 2$) of words either from the lists $\calL'$ or $\calM'$, or of words previously calculated during this process. 
These operations can be executed in polynomial time when working with straight-line programs.
Since there is an absolute bound $|\Ball(2\delta)|^4 + 1$ on the lengths of $\calL'$ and $\calM'$, the complete application of \textsc{ShortenWords} to each list takes place in polynomial time. 

\textsc{ShortenWords} has two possible outcomes.  
In the first it finds a product $u_r \cdot u_{r+1} \cdots u_s$ of elements of $\calL'$ with infinite order.  
This reduces the problem to the case dealt with in Section~\ref{sec-infinite-order}.  

In the second possible outcome, \textsc{ShortenWords} replaces $\calL'$ and $\calM'$ by conjugates and then calculates lists $\calL'' = (u_1', \ldots, u_n')$ and $\calM'' = (v_1', \ldots, v_n')$ with $|u_i'|$ and $|v_i'|$ bounded by a constant, and such that $\calL^g = \calM$ if and only if $(\calL'')^g = \calM''$. 

We now test in time $O(1)$ (using our precomputed look-up table) whether there exists $g \in G$ with
\[
(u_1', \ldots, u_n')^g = (v_1', \ldots, v_n')
\]
If so, we replace $(u_1,\ldots,u_k)$ by $(u_1,\ldots,u_k)^g$ and thereby assume that $u_i=v_i$ for $1 \leq i \leq n$.
For the centraliser problem, methods are described in~\cite[Proposition~2.3]{GerstenShort91} of finding a generating set of the
centraliser of any quasiconvex subgroup of any biautomatic group; finitely generated subgroups of hyperbolic groups satisfy these
conditions.
Since they need only be applied to words of bounded length their complexity does not matter - indeed, we could precompute all such centralisers.

This completes the proof in the case $n = k$. 
In the case $k > n$, it is proved in~\cite[Corollary~30]{BuckleyHolt13} that the centraliser $C$ of the subgroup $\subgp{u_1, \ldots, u_n}$ is finite, and that the elements of $C$ have lengths bounded by a constant. 
So we can compute the elements of $C$ explicitly (in time $O(1)$).
Then we simply need to check whether any $g \in C$ satisfies
\[
(u_{n + 1}, \ldots, u_k)^g = (v_{n + 1}, \ldots, v_k)
\]
This completes the proofs of Theorems~\ref{thm-compcp} and~\ref{thm-comp-central}. \qed

\subsection{Compressed knapsack}

In this final section, we prove the following.

\begin{theorem} 
\label{thm-knapsack-hyp}
If $G$ is an infinite hyperbolic group then the compressed knapsack problem for $G$ is $\NP$--complete.
\end{theorem}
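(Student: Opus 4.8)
\subsection*{Plan of proof}

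The plan is to prove the two halves separately: membership in $\NP$, which is where Corollary~\ref{coro-CWP-hyperbolic} does the work, and $\NP$--hardness, by a reduction from the classical knapsack problem for binary-encoded integers.

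For \emph{membership in $\NP$}, I would exhibit the following nondeterministic polynomial-time procedure. The input is a list $(\calG, \calG_1, \ldots, \calG_k)$ of straight-line programs over $\Sigma$; write $u = \eval(\calG)$ and $u_i = \eval(\calG_i)$, and let $N$ be the total size of the input. First I would invoke the analysis of the (uncompressed) knapsack problem for hyperbolic groups from \cite{MyNiUs14} (see also \cite{FrenkelNU15, Loh2019}): there is a polynomial $p$, depending only on $G$, so that if the equation $u =_G u_1^{n_1} \cdots u_k^{n_k}$ has any solution with $n_i \in \NN$, then it has one with $n_i \le p(|u| + |u_1| + \cdots + |u_k|)$ for all $i$. (Here one uses, for those $u_i$ of finite order, that hyperbolic groups have torsion of bounded order, and for those of infinite order the fellow-travelling argument in the thin geodesic polygon whose sides are labelled $u_1^{n_1}, \ldots, u_k^{n_k}, u^{-1}$.) By Lemma~\ref{lem-SLP-upper-bound} the word lengths $|u|$ and $|u_i|$ are at most $3^{N/3}$, so this bound yields $\log n_i = O(N)$. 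Hence the tuple $(n_1, \ldots, n_k)$, written in binary, is a certificate of polynomial size. To verify it: by Proposition~\ref{prop-power} compute in polynomial time straight-line programs for the words $u_1^{n_1}, \ldots, u_k^{n_k}$, concatenate these to get a straight-line program $\calH$ with $\eval(\calH) = u_1^{n_1} \cdots u_k^{n_k}$, and then use Remark~\ref{rem-inverse} together with Corollary~\ref{coro-CWP-hyperbolic} to decide in polynomial time whether $\eval(\calG)$ and $\eval(\calH)$ represent the same element of $G$; accept exactly when they do. Completeness and soundness of this verifier are immediate from the size bound, so the compressed knapsack problem lies in $\NP$.

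For \emph{$\NP$--hardness}, I would use the standard fact that an infinite hyperbolic group contains an element $g$ of infinite order; see \cite{Gro87}. Fix a word $\gamma \in \Sigma^*$ with $g =_G \gamma$; since the map $n \mapsto g^n$ is injective, we have $g^m =_G g^{m'}$ if and only if $m = m'$, and $g^{m_1} \cdots g^{m_r} =_G g^{m_1 + \cdots + m_r}$. The classical knapsack problem for binary-encoded natural numbers --- given $a, a_1, \ldots, a_k \in \NN$ in binary, decide whether $a = n_1 a_1 + \cdots + n_k a_k$ for some $n_1, \ldots, n_k \in \NN$ --- is $\NP$--complete; this is exactly the case $G = \ZZ$ discussed just before the theorem statement. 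Given such an instance, I would start from a constant-size straight-line program with output $\gamma$ and apply Proposition~\ref{prop-power} to build, in polynomial time, straight-line programs $\calG, \calG_1, \ldots, \calG_k$ over $\Sigma$ with $\eval(\calG) = \gamma^{a}$ and $\eval(\calG_i) = \gamma^{a_i}$. Then $\eval(\calG) =_G \eval(\calG_1)^{n_1} \cdots \eval(\calG_k)^{n_k}$ holds if and only if $g^{a} =_G g^{\,n_1 a_1 + \cdots + n_k a_k}$, that is, if and only if $a = n_1 a_1 + \cdots + n_k a_k$. Thus $(a, a_1, \ldots, a_k) \mapsto (\calG, \calG_1, \ldots, \calG_k)$ is a polynomial-time many-one reduction, which gives $\NP$--hardness; combined with the previous paragraph this proves the theorem.

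The step I expect to be the main obstacle is the solution-size bound used for $\NP$ membership: one must make sure that the polynomial bound on the smallest knapsack solution, stated in terms of the lengths of the \emph{uncompressed} words $u, u_1, \ldots, u_k$, is genuinely available from the literature on knapsack in hyperbolic groups --- the point being that a polynomial in the word lengths becomes, after the single-exponential gap between a straight-line program and its output, still only a single exponential, hence a polynomial number of bits. Everything else is routine: the compressed word test is handled by Corollary~\ref{coro-CWP-hyperbolic} (this is precisely where the main result of the paper enters, replacing the trivial linear-time uncompressed word problem used in \cite{MyNiUs14}), while powering and concatenation of straight-line programs are handled by Proposition~\ref{prop-power}. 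A minor point to check carefully is the behaviour of finite-order generators $u_i$, for which the exponent $n_i$ matters only modulo the bounded order of $u_i$; this is inherited directly from the uncompressed treatment.
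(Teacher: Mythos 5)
Your proposal is correct and follows essentially the same route as the paper: membership in $\NP$ via the polynomial solution-size bound of \cite[Theorem~6.7]{MyNiUs14} (Theorem~\ref{thm-ushakov}), Lemma~\ref{lem-SLP-upper-bound}, Proposition~\ref{prop-power}, and Corollary~\ref{coro-CWP-hyperbolic}; and hardness from an infinite-order element, which embeds the binary/compressed knapsack problem over $\ZZ$. The only difference is cosmetic: you spell out the hardness reduction explicitly from classical binary knapsack, whereas the paper simply cites the $\NP$--completeness of compressed knapsack for $\ZZ$ together with the existence of infinite-order elements in infinite hyperbolic groups.
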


As above, fix $G$ a finitely generated group.  Fix as well a finite symmetric generating set $\Sigma$.
A \emph{knapsack expression} over $\Sigma$ is a regular expression of the form $E = u^{-1} u_1^* u_2^* \cdots u_k^*$ with $k \geq 0$ and $u,u_i \in \Sigma^*$. 
The \emph{length} of $E$ is defined to be  $|E| = |u| + \sum_{i = 1}^k |u_i|$.
A \emph{solution} for $E$ is a tuple $(n_1, n_2, \ldots, n_k) \in \NN^k$ of natural numbers such that $u =_G u_1^{n_1} u_2^{n_2} \cdots u_k^{n_k}$.  In other words:
the language defined by $E$ contains a word that represents the identity of $G$.

The \emph{knapsack problem} for $G$, over $\Sigma$, is the following.
\begin{description}
\item[Input] A knapsack expression $E$ over $\Sigma$.
\item[Question] Does $E$ has a solution?
\end{description}
In \cite[Theorem~6.1]{MyNiUs14} it was shown that the knapsack problem for a hyperbolic group can be solved in polynomial time.
A crucial step in the proof for this fact is the following result, which is of independent interest.
 
\begin{theorem}[\mbox{\cite[Theorem~6.7]{MyNiUs14}}] 
\label{thm-ushakov}
For every hyperbolic group $G$ there exists a polynomial $p(x)$ such that the following holds.
Suppose that a knapsack expression $E = u^{-1} u_1^* u_2^*\cdots u_{k}^*$ over $G$ has a solution.  
Then $E$ has a solution $(n_1, n_2, \ldots, n_k) \in \NN^k$ such that $n_i \leq p(|E|)$ for all $i$ satisfying $1 \leq i \leq k$. \qed
\end{theorem}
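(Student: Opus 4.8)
I would follow the geometric strategy: interpret a candidate tuple $(n_1,\dots,n_k)$ as the path $P_W$ tracing $W = u_1^{n_1}\cdots u_k^{n_k}$ in the Cayley graph $\Gamma$, and show that a solution with a very large exponent can always be replaced by a shorter one, the residual bound being polynomial in $|E|$. First I would reduce to the case in which every $u_i$ has infinite order. A hyperbolic group has only finitely many conjugacy classes of finite subgroups, so (as used in the proof of Corollary~\ref{coro-order}) there is a constant $c=c(G)$ bounding the order of every torsion element; if $u_i$ has finite order then $\mathrm{ord}(u_i)\le c$, and replacing $n_i$ by $n_i \bmod \mathrm{ord}(u_i)$ leaves $W$ unchanged in $G$, so such an $n_i$ may be taken less than $c$. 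Replacing each $u_i$ and $u$ by a shortlex representative changes $|E|$ by at most a constant factor, so from now on $|u_i|\le |E|$ and each $u_i$ has infinite order.

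\textbf{Quasi-axes and a lower bound on translation length.} For an infinite-order $u_i$, Step~\ref{step-straight} of the conjugacy algorithm (which rests on \cite[Section~3.2]{EpsteinH06}) supplies an integer $m_i$ with $1\le m_i\le |\Ball(4\delta)|^2=:m_0$ and an element $g_i\in \Ball(4\delta)$ such that $\slex(g_i^{-1}u_i^{m_i}g_i)$ is shortlex straight, i.e. a nonempty word all of whose powers are geodesic. It follows that the orbit map $n\mapsto g_{u_i^{n}}$ is a $(\lambda_i,\epsilon_i)$-quasigeodesic line in $\Gamma$ with $\lambda_i,\epsilon_i$ bounded by a fixed linear function of $|u_i|\le |E|$, and that the stable translation length $\ell(u_i)=\lim_{n\to\infty}|u_i^{n}|/n$ satisfies $\ell(u_i)\ge 1/m_0=:\ell_0>0$; hence $|g_{u_i^{n}}|\ge \ell_0\,n - O(|E|)$ for all $n\ge 0$. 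These are the only two facts about the individual $u_i$ that the argument needs.

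\textbf{The shortening argument.} Take a solution $(n_1,\dots,n_k)$, and suppose some $n_j$ exceeds a polynomial $p(|E|)$ to be fixed at the end. Consider $P_W$, a path from $1_G$ to $g_u$ with $|g_u|\le |E|$, decomposed into the blocks $B_i$ tracing $u_i^{n_i}$. Comparing $W$ with $\slex(W)$ and applying the geodesic-quadrilateral splittings of Lemma~\ref{lem-quad} and Lemma~\ref{lem-quad-two}, together with the thin-triangle correspondence of Lemma~\ref{lem-thin-triangle}, to the junctions between consecutive blocks, one shows that cancellation in $W$ is local: each block either survives (a subpath of it lies within bounded Hausdorff distance of $\slex(W)$, up to connectors of bounded length) or is entirely consumed by cancellation against an adjacent run of blocks. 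In the first case the surviving core of $B_j$ has length at least $\ell_0 n_j - O(|E|)$ and embeds coarsely into $\slex(W)$, forcing $\ell_0 n_j - O(|E|)\le |\slex(W)|\le |E|$, so $n_j=O(|E|)$. In the second case $B_j$ fellow-travels, along a length comparable to $\ell_0 n_j$, the quasi-axis of some neighbouring $u_{j'}$; if the quasi-axes of $u_j$ and $u_{j'}$ are not asymptotic, their divergence is bounded in terms of $\lambda_i,\epsilon_i$, hence by $O(\mathrm{poly}(|E|))$, again bounding $n_j$; and if they are asymptotic then $u_j$ and $u_{j'}$ have a common power, i.e. lie in a common (virtually cyclic) subgroup, and the contribution of that maximal run of mutually-commensurable long blocks is governed by a knapsack subproblem over a finite extension of $\ZZ$, whose exponents admit polynomial bounds by elementary linear Diophantine estimates (Cramer/Siegel-type). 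Iterating over the long blocks and choosing $p$ to dominate all the $O(\mathrm{poly}(|E|))$ bounds above yields the theorem.

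\textbf{Main obstacle.} The hard part will be the second case of the shortening argument: making the ``distinct quasi-axes diverge'' estimate quantitative and uniform while the quasigeodesic constants $\lambda_i,\epsilon_i$ themselves grow with $|E|$, and carrying out the combinatorial bookkeeping for a whole run of consecutive long blocks that cancel jointly (not merely a single pair), including locating and collapsing maximal sub-runs of pairwise-commensurable elements and then controlling the resulting virtually-cyclic subproblems without blowing up $|E|$. This is exactly the technical core of \cite[Section~6]{MyNiUs14}, whose estimates I would reproduce; the quadrilateral and thin-triangle lemmas already recorded in this paper are the precise local tools that drive it.
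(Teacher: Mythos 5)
The first thing to note is that the paper does not prove Theorem~\ref{thm-ushakov} at all: it is imported verbatim from \cite[Theorem~6.7]{MyNiUs14} and used as a black box (it is stated with the proof omitted), so there is no internal argument of the paper to compare your proposal against. Judged on its own terms, your preliminary reductions are sound: torsion elements of a hyperbolic group have uniformly bounded order, so exponents attached to finite-order $u_i$ may be reduced below a constant; and for infinite-order $u_i$ a uniform positive lower bound $\ell_0$ on stable translation length (via a shortlex-straight power of a bounded conjugate, or the standard fact that translation lengths in a hyperbolic group are bounded away from zero), together with subadditivity of $n \mapsto |u_i^n|$, gives $|g_{u_i^n}| \ge \ell_0 n$, which settles the case of a long block whose core survives into $\slex(W)$.

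The remainder, however, is a genuine gap rather than a proof. The dichotomy ``each block either survives or is entirely consumed by cancellation against an adjacent run of blocks'' is asserted, not established: in the geodesic $(k+2)$-gon associated with $u^{-1}u_1^{n_1}\cdots u_k^{n_k}$ a long side can fellow-travel several other, not necessarily adjacent, sides along different portions, and Lemmas~\ref{lem-quad}, \ref{lem-quad-two} and~\ref{lem-thin-triangle} control only quadrilaterals and triangles, not the global combinatorics of the polygon. Likewise the two quantitative claims you rely on in the second case --- that non-asymptotic quasi-axes diverge within a polynomially bounded overlap even though the quasigeodesic constants grow with $|E|$, and that a maximal run of pairwise-commensurable long blocks reduces to a knapsack instance over a virtually cyclic subgroup with polynomially bounded exponents --- are precisely the technical core of \cite[Section~6]{MyNiUs14}, and you explicitly defer them to that source. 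So your write-up is a plausible outline of the route taken in the cited paper, but it does not constitute an independent proof: to make it one you would have to carry out those estimates in full, whereas the present paper simply cites \cite[Theorem~6.7]{MyNiUs14}, which is the honest alternative.
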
    

Recently, this result has been extended to acylindrically hyperbolic group in \cite{BierBog21}.

Let us now consider the \emph{compressed knapsack problem} for $G$. 
It is defined in the same way as the knapsack problem, except that the words $u,u_i \in \Sigma^*$ are given by straight-line programs.
Note that the compressed knapsack problem for $\ZZ$ is $\NP$--complete~\cite[Proposition 4.1.1]{Haa11}. 
Hence, for every group with an element of infinite order, the compressed knapsack problem is $\NP$--hard.
This makes it interesting to look for groups where the compressed knapsack problem is $\NP$--complete.

From Corollary~\ref{coro-CWP-hyperbolic} and Theorem~\ref{thm-ushakov} we prove Theorem~\ref{thm-knapsack-hyp},
which states that compressed knapsack for an infinite hyperbolic group $G$ is $\NP$--complete.

\begin{proof}[Proof of Theorem~\ref{thm-knapsack-hyp}]
Consider a knapsack expression $E = u^{-1} u_1^* u_2^* \cdots u_{k}^*$ over $G$, where $u$ and the $u_i$ are given by straight-line programs $\calG$ and $\calG_i$.
We then have $|u|,|u_i| \leq 3^{|\calG_i|/3}$ by Lemma~\ref{lem-SLP-upper-bound}.
Let $N = |\calG| + \sum_{i = 1}^k |\calG_i|$ be the input length.

By Theorem~\ref{thm-ushakov}, there exists a polynomial $p(x)$ such that $E$ has a solution if and only if it has a solution $(n_1, n_2 \ldots, n_k) \in \NN^k$ with $n_i \leq p(|E|)$ for all $i$ so that $1 \leq i \leq k$.
Thus we obtain a bound of the form $2^{O(N)}$ on the exponents $n_i$. 
Hence, we can guess the binary encoding of a tuple $(n_1, n_2, \ldots, n_k) \in \NN^k$ with all $n_i$ bounded by $2^{O(N)}$ and then check whether it is a solution for $E$. 
The latter can be done in polynomial time by constructing from the straight-line programs $\calG$ and $\calG_i$ a straight-line program $\calH$ for $u^{-1} u_1^{n_1} u_2^{n_2} \cdots u_{k}^{n_{k}}$ using Proposition~\ref{prop-power}. 
Finally, we check in polynomial time whether $\eval(\calH) =_G 1$ using Corollary~\ref{coro-CWP-hyperbolic}. 

The $\NP$--hardness of the compressed knapsack problem for $G$ (an infinite hyperbolic group) now follows from the fact that $G$ has elements of infinite order~\cite[page~156]{GhysdelaHarpe90} and the above mentioned result for $\ZZ$~\cite[Proposition 4.1.1]{Haa11}.  
\end{proof}


\end{document}